\title{Generic representations of abelian groups and extreme amenability}
\author{Julien Melleray and Todor Tsankov}
\address{Universit\'e de Lyon \\
CNRS UMR 5208 \\
Universit\'e Lyon 1 \\
Institut Camille Jordan \\
43 blvd. du 11 novembre 1918\\
F-69622 Villeurbanne Cedex \\
France}
\email{melleray@math.univ-lyon1.fr}
\address{Universit\'e Paris 7 \\ UFR de Math\'ematiques, case 7012 \\ Institut de Math\'ematiques de Jussieu \\ 75205 Paris \sc{cedex} 13 \\ France}
\email{todor@math.jussieu.fr}
\subjclass[2010]{Primary 54H11, 22A05; Secondary 54H05, 22F50, 54E52}
\keywords{Unitary representations, measure-preserving actions, Urysohn space, extremely amenable group, Polish group, Baire category}
\numberwithin{equation}{section}
\DeclareMathOperator{\Res}{Res}
\begin{document}

\begin{abstract}
If $G$ is a Polish group and $\Gamma$ is a countable group, denote by $\Hom(\Gamma, G)$ the space of all homomorphisms $\Gamma \to G$. We study properties of the group $\cl{\pi(\Gamma)}$ for the generic $\pi \in \Hom(\Gamma, G)$, when $\Gamma$ is abelian and $G$ is one of the following three groups: the unitary group of an infinite-dimensional Hilbert space, the automorphism group of a standard probability space, and the isometry group of the Urysohn metric space. Under mild assumptions on $\Gamma$, we prove that in the first case, there is (up to isomorphism of topological groups) a unique generic $\cl{\pi(\Gamma)}$; in the other two, we show that the generic $\cl{\pi(\Gamma)}$ is extremely amenable. We also show that if $\Gamma$ is torsion-free, the centralizer of the generic $\pi$ is as small as possible, extending a result of Chacon--Schwartzbauer from ergodic theory.
\end{abstract}

\maketitle

\section{Introduction}
A \df{representation} of a countable group $\Gamma$ into a Polish group $G$ is a homomorphism $\Gamma \to G$. In the case when $G$ is the group of symmetries of a certain object $X$, a representation can be thought of as an action  $\Gamma \actson X$ that preserves the relevant structure; by varying $X$ one obtains settings of rather different flavor. For example, representation theory studies linear representations, while understanding homomorphisms $\Gamma \to \Aut(\mu)$ (or, equivalently, measure-preserving actions of $\Gamma$ on a standard probability space $(X, \mu)$) is the subject of ergodic theory.

In this paper, we take a global view and, rather than study specific representations, we are interested in the space of all representations $\Hom(\Gamma, G)$. $\Hom(\Gamma, G)$ is a closed subspace of the Polish space $G^\Gamma$ and is therefore also Polish. It is equipped with a natural action of $G$ by conjugation
\[
(g \cdot \pi)(\gamma) = g\pi(\gamma)g^{-1}, \quad \text{for } \pi \in \Hom(\Gamma, G), g \in G
\]
and if two representations are conjugate, one usually considers them as isomorphic. We are interested in \df{generic} properties of representations (that is, properties that hold for a comeager set of $\pi \in \Hom(\Gamma, G)$) that are moreover invariant under the $G$-action. We note that when $\Gamma$ is the free group $\F_n$, $\Hom(\Gamma, G)$ can be canonically identified with $G^n$ (and in particular, as a $G$-space, $\Hom(\Z, G)$ is isomorphic to $G$ equipped with the action by conjugation).

A very special situation is when the action $G \actson \Hom(\Gamma, G)$ has a comeager orbit; then, studying generic invariant properties of representations amounts to understanding the single generic point. Identifying when a comeager orbit exists has been the focus of active research during the last years, and looking for generic conjugacy classes is a well established topic in the study of automorphism groups of countable structures. Hodges--Hodkinson--Lascar--Shelah~\cite{Hodges1993a} introduced the notion of ample generics (a Polish group $G$ has \df{ample generics} if there is a generic element in $\Hom(\F_n, G)$ for every $n$) and gave the first examples, then Kechris--Rosendal~\cite{Kechris2007a} undertook a systematic study of this notion; more recently, Rosendal~\cite{Rosendal2010p2} proved that there is a generic point in $\Hom(\Gamma, \Iso(\Q\bU))$ for any finitely generated abelian $\Gamma$ ($\Q\bU$ is the rational Urysohn space).

In many situations, however, it turns out that orbits are meager. Important examples of this phenomenon come from ergodic theory (that is, when $G = \Aut(\mu)$), and identifying generic properties in that setting has a long history, especially in the case $\Gamma=\Z$: for example, Halmos proved that the generic transformation in $\Aut(\mu)$ is weakly mixing and has rank $1$. We refer the reader to Kechris's book ~\cite{Kechris2010} for a detailed study of the spaces $\Hom(\Gamma, \Aut(\mu))$ for various groups $\Gamma$.

In this paper, we study properties of the subgroup $\cl{\pi(\Gamma)} \leq G$ for a generic $\pi \in \Hom(\Gamma, G)$. Our first theorem is quite general and may be considered as a starting point for the rest of the results. Recall that a Polish group is called \df{extremely amenable} if every continuous action of the group on a compact Hausdorff space has a fixed point. We recommend Pestov's book~\cite{Pestov2006} as a general reference on extreme amenability.
\begin{theorem} \label{thi:gdelta}
Let $\Gamma$ be a countable group and $G$ a Polish group. Then the set
\[
\set{\pi : \cl{\pi(\Gamma)} \text{ is extremely amenable}}
\]
is $G_\delta$ in $\Hom(\Gamma, G)$.
\end{theorem}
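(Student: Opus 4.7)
The plan is to invoke Pestov's finitary characterization of extreme amenability: a Polish group $H$ is extremely amenable if and only if for every bounded right-uniformly continuous $f \colon H \to [0,1]$, every finite $F \subseteq H$, and every $\epsilon > 0$, there exists $h \in H$ with $|f(ah) - f(h)| < \epsilon$ for all $a \in F$. To make this amenable to a countable-intersection argument, I would fix once and for all a compatible right-invariant metric $d \le 1$ on $G$ (Birkhoff--Kakutani) together with a countable uniformly dense family $\mathcal F$ of $1$-Lipschitz functions $G \to [0,1]$; any bounded right-uniformly continuous function on $G$, or on a closed subgroup, is then a uniform limit of rescaled elements of $\mathcal F$.

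For each finite $F \subseteq \Gamma$, each $f \in \mathcal F$, and each rational $\epsilon > 0$, I set
\[
U_{F,f,\epsilon} = \bigcup_{\gamma \in \Gamma}\bigl\{\pi \in \Hom(\Gamma,G) : |f(\pi(a\gamma)) - f(\pi(\gamma))| < \epsilon \text{ for all } a \in F\bigr\},
\]
which is open since each evaluation $\pi \mapsto \pi(\delta)$ is continuous and $f$ is continuous. The task then reduces to showing
\[
\bigcap_{F,f,\epsilon} U_{F,f,\epsilon} = \bigl\{\pi : \cl{\pi(\Gamma)} \text{ is extremely amenable}\bigr\},
\]
which yields the desired $G_\delta$ description.

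The inclusion $\supseteq$ is straightforward: if $H := \cl{\pi(\Gamma)}$ is extremely amenable, applying Pestov's criterion to $f|_H$ and $\pi(F) \subseteq H$ produces $h \in H$ with $|f(\pi(a)h) - f(h)| < \epsilon/3$ for $a \in F$. Because $F$ is finite, continuity of left multiplication by the individual elements $\pi(a)$ yields a common modulus $\delta > 0$ such that $d(x, h) < \delta$ forces $|f(\pi(a)x) - f(\pi(a)h)| < \epsilon/3$ for all $a \in F$; density of $\pi(\Gamma)$ in $H$ then lets me choose $\pi(\gamma)$ with $d(\pi(\gamma), h) < \min(\delta, \epsilon/3)$, and the triangle inequality gives $\pi \in U_{F,f,\epsilon}$.

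The inclusion $\subseteq$ is the main point. Given $\pi$ in the intersection, I verify Pestov's criterion for $H = \cl{\pi(\Gamma)}$: starting with a $1$-Lipschitz $g \colon H \to [0,1]$, a finite $F' \subseteq H$, and $\epsilon > 0$, I would extend $g$ to a $1$-Lipschitz $\tilde g \colon G \to [0,1]$ via the McShane formula, pick $f \in \mathcal F$ with $\|f - \tilde g\|_\infty < \epsilon/5$, and pick a finite $F_0 \subseteq \Gamma$ such that every $a \in F'$ admits some $a_0 \in F_0$ with $d(a, \pi(a_0)) < \epsilon/5$. Right-invariance of $d$ combined with the $1$-Lipschitz property then yields $|g(ax) - g(\pi(a_0)x)| \le d(a, \pi(a_0)) < \epsilon/5$ uniformly in $x \in H$---this uniformity in the right factor is the essential role of right-invariance. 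Applying $\pi \in U_{F_0, f, \epsilon/5}$ produces $\gamma \in \Gamma$ with $|f(\pi(a_0\gamma)) - f(\pi(\gamma))| < \epsilon/5$ for all $a_0 \in F_0$; setting $h = \pi(\gamma) \in H$ and using $\tilde g = g$ on $H$, four $\epsilon/5$-error contributions combine via triangle inequality into $|g(ah) - g(h)| < \epsilon$. The principal obstacle is organizing these approximation layers so that all error terms remain simultaneously controlled; none of the individual steps is deep.
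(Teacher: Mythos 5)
The high-level strategy---encode extreme amenability of $\cl{\pi(\Gamma)}$ as a countable intersection of open conditions via an oscillation-stability criterion---is the same one the paper uses, and your verification that the one-function condition is equivalent to extreme amenability is essentially right. However, there is a genuine gap at the very first step, and it is not cosmetic.

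You claim the existence of ``a countable uniformly dense family $\mathcal F$ of $1$-Lipschitz functions $G \to [0,1]$'' such that every bounded right-uniformly continuous function on $G$ (or a closed subgroup) is a uniform limit of rescaled elements of $\mathcal F$. This is false whenever $G$ is non-compact: the space $\mathrm{Lip}_1(G,[0,1])$ is not separable in the sup norm. (For instance, take a $1$-separated infinite set $D \subseteq G$ with respect to the right-invariant metric $d$; the functions $f_S(x)=\min(1,d(x,S))$ for nonempty $S \subseteq D$ are $1$-Lipschitz, and $\|f_S - f_{S'}\|_\infty = 1$ whenever $S \ne S'$, giving uncountably many functions at mutual distance $1$.) The sets $U_{F,f,\epsilon}$ are indeed open, but the $\subseteq$ inclusion of your claimed identity depends entirely on being able to pick $f \in \mathcal F$ with $\|f - \tilde g\|_\infty < \epsilon/5$ for an \emph{arbitrary} Lipschitz $\tilde g$, which a countable $\mathcal F$ cannot supply. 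One cannot fix this by approximating $\tilde g$ only at finitely many points, because the relevant points $\pi(\gamma)$, $\pi(a_0\gamma)$ are produced only \emph{after} $f$ has been chosen. So $\bigcap_{F,f,\epsilon} U_{F,f,\epsilon}$ may well be strictly larger than $\{\pi : \cl{\pi(\Gamma)} \text{ is extremely amenable}\}$, and the proof does not go through.

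The paper sidesteps this obstacle by using the finite-coloring formulation of finite oscillation stability (Pestov, Theorem~1.1.18) rather than the function formulation. The resulting condition \eqref{extam2} quantifies over finite subsets $A, B$ of the countable group $\Gamma$, colorings $c : B \to \{0,1\}$ (of which there are finitely many once $B$ is fixed), and elements of $\Gamma$; all the metric content is carried by the single fixed left-invariant metric $d$ on $G$. So the intersection is automatically countable, with no need for a dense family of continuous functions. If you want to salvage your route, you would need to replace the uniform approximation of $\tilde g$ by something genuinely finitary in $\Gamma$---at which point you are essentially reinventing the paper's coloring argument.
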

Of course, it is possible that generically $\pi(\Gamma)$ is dense in $G$, for instance this often happens when $\Gamma$ is free. The easiest way to ensure that we are in a non-trivial situation is to assume that $\Gamma$ satisfies some global law, for example that it is abelian. For that reason (among others), in the rest of the paper, we specialize to abelian groups $\Gamma$ and three examples for $G$: the unitary group of a separable, infinite-dimensional Hilbert space $U(\mcH)$, the group of measure-preserving automorphisms of a standard probability space $\Aut(\mu)$, and the group of isometries of the universal Urysohn metric space $\Iso(\bU)$ (see \cite{TopAppl:Urysohn2008} for background on the Urysohn space). Conjugacy classes are meager in $\Hom(\Gamma, U(\mcH))$ for any countable infinite $\Gamma$ \cite{Kerr2010}; it is an open question whether this is true for the other two groups. The orbits in $\Hom(\Gamma,\Aut(\mu))$ are known to be meager for any infinite amenable $\Gamma$ \cite{Foreman2004}. For $\Iso(\bU)$, the situation is less clear, though we prove here, using a technique due to Rosendal, that conjugacy classes in $\Hom(\Gamma,\Iso(\bU))$ are meager for all torsion-free abelian $\Gamma$ (see Theorem~\ref{t:conj-meager}).

Of major importance in this paper is the group $L^0(\T)$: the Polish group of (equivalence classes of) measurable maps $([0, 1], \lambda) \to \T$, where $\lambda$ denotes the Lebesgue measure, $\T$ is the torus, the group operation is pointwise multiplication, and the topology is given by convergence in measure (equivalently, $L^0(\T)$ can be viewed as the unitary group of the abelian von Neumann algebra $L^\infty([0, 1])$). Two important facts about this group, proved by Glasner~\cite{Glasner1998a}, are that it is \df{monothetic} (contains a dense cyclic subgroup) and that it is extremely amenable.

Recall that an abelian group is called \df{bounded} if there is an upper bound for the orders of its elements and \df{unbounded} otherwise. Even though our results hold in a slightly greater generality, in order to avoid introducing technical definitions, we state them here only for unbounded groups.

In the case of the unitary group, using spectral theory, we are able to identify exactly the group $\cl{\pi(\Gamma)}$ for a generic $\pi$.
\begin{theorem} \label{thi:unitary}
Let $\Gamma$ be a countable, unbounded, abelian group. Then the set
\[
\set{\pi : \cl{\pi(\Gamma)} \cong L^0(\T)}
\]
is comeager in $\Hom(\Gamma, U(\mcH))$.
\end{theorem}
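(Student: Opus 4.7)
The plan is to apply the SNAG theorem (spectral theorem for unitary representations of countable abelian groups) to identify $\cl{\pi(\Gamma)}$ with a closed subgroup of $L^0(\hat\Gamma,\mu;\T)$, and then to show that generically this subgroup is all of $L^0(\T)$.

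By SNAG, each $\pi\in\Hom(\Gamma,U(\mcH))$ is unitarily equivalent to a multiplication representation on $L^2(\hat\Gamma,\mu)\otimes\mcH_0$, where $\hat\Gamma$ is the Pontryagin dual of $\Gamma$, $\mu$ is the maximal spectral type of $\pi$, and $\mcH_0$ encodes the (uniform) multiplicity (the general multiplicity case reduces to this one). A direct computation, using that strong convergence of uniformly bounded multiplication operators corresponds to convergence in measure of the multipliers, yields
\[
\cl{\pi(\Gamma)}\;\cong\;\cl{\iota_\mu(\Gamma)}\;\leq\;L^0(\hat\Gamma,\mu;\T)
\]
as topological groups, where $\iota_\mu(\gamma)\in L^0(\hat\Gamma,\mu;\T)$ is the character $\chi\mapsto\chi(\gamma)$. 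Since $L^0(X,\mu;\T)\cong L^0(\T)$ for any non-atomic standard probability space $(X,\mu)$, the theorem reduces to showing that the following two conditions hold generically: (i)~$\mu$ is non-atomic, and (ii)~$\iota_\mu(\Gamma)$ is dense in $L^0(\hat\Gamma,\mu;\T)$.

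Condition (i) amounts to $\pi$ having no eigencharacter, which is classically comeager: for each $\chi\in\hat\Gamma$, the set of $\pi$ having $\chi$ as an eigencharacter is closed with empty interior, and $\hat\Gamma$ is second countable. For (ii), the $G_\delta$-ness is obtained by rewriting density as a countable intersection of open conditions on $\pi$ via the spectral projections of $\pi$ and a countable collection of test vectors. For the density of (ii), the key input is that $\Gamma$ being unbounded abelian, a Glasner-type argument provides a continuous homomorphism $\sigma_0\colon\Gamma\to L^0(\T)$ with dense image; composing with the multiplication embedding $L^0(\T)\hookrightarrow U(L^2[0,1])$ yields $\sigma\in\Hom(\Gamma,U(L^2[0,1]))$ with $\cl{\sigma(\Gamma)}\cong L^0(\T)$. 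Given a basic open neighborhood of an arbitrary $\pi_0$, one approximates $\pi_0$ by direct-summing infinitely many copies of $\sigma$ with a suitable finite-dimensional truncation of $\pi_0$ matching the prescribed test vectors.

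The main obstacle is the density step: one must verify that the perturbation $\pi$ so constructed has $\cl{\pi(\Gamma)}$ \emph{equal} to $L^0(\T)$ rather than a proper extension by the compact closure of the finite-dimensional truncation. This absorption relies on the fact that $L^0(\T)$, being extremely amenable (Glasner), admits no non-trivial continuous homomorphism to any compact group, which will force the compact piece to be trivialized inside the closure. The unboundedness of $\Gamma$ enters essentially in Glasner's construction, since a bounded abelian group cannot have dense image in the divisible group $L^0(\T)$.
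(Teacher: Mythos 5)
Your reduction via the spectral theorem — that $\cl{\pi(\Gamma)}$ depends only on the maximal spectral type $\mu$ and is identified with $\cl{\iota_\mu(\Gamma)}$ inside $L^0(\hat\Gamma,\mu,\T)$, so it suffices to show that generically (i) $\mu$ is non-atomic and (ii) $\iota_\mu(\Gamma)$ is dense in $L^0(\hat\Gamma,\mu,\T)$ — matches the paper's strategy, as does the observation that (i) is generic. (The paper normalizes the multiplicity issue a bit differently, by first establishing that a fixed $\xi$ is generically cyclic, which makes the identification cleaner; but that is a cosmetic difference.) The genuine gap is in your density argument for (ii).

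You propose to approximate an arbitrary $\pi_0$ by $\pi = \sigma^{\oplus\infty}\oplus\rho$ where $\sigma$ has dense image in $L^0(\T)$ and $\rho$ is a finite-dimensional subrepresentation (or truncation) of $\pi_0$, and to argue that the compact piece coming from $\rho$ is ``absorbed'' because $L^0(\T)$ has no nontrivial continuous homomorphisms to compact groups. This does not work, and in fact points the wrong way. Write $K=\cl{\rho(\Gamma)}$, a compact subgroup of $U(n)$. Then $G'=\cl{\pi(\Gamma)}$ is a \emph{closed} subgroup of $L^0(\T)\times K$, and since $K$ is compact and $\rho(\Gamma)$ is dense in $K$, the second coordinate projection restricted to $G'$ is \emph{surjective onto $K$}. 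So $G'$ has $K$ as a continuous Hausdorff quotient. If $\rho$ is nontrivial then $K$ is nontrivial, and a group with a nontrivial compact quotient cannot be extremely amenable, hence cannot be isomorphic to $L^0(\T)$. Extreme amenability does not trivialize the compact piece inside the closure; it is precisely the obstruction that shows your $\pi$ fails to have $\cl{\pi(\Gamma)}\cong L^0(\T)$. (And if $\rho$ is trivial, $\pi$ has no relation to $\pi_0$ and you have not achieved density.) The appeal to extreme amenability is also circular in spirit: you would need to already know $\cl{\pi(\Gamma)}\cong L^0(\T)$ before you could use Glasner's theorem about it.

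The paper avoids this entirely by never introducing an $L^0(\T)$-summand in the approximation. Instead it approximates $\pi_0$ by a genuinely finite-dimensional representation (Lemma~\ref{l:densefdim}), diagonalizes it with eigencharacters $\phi_1,\dots,\phi_n\in\hat\Gamma$, and then uses Lemma~\ref{l:densehatGamma} to perturb the tuple $\phi$ (staying inside the prescribed open set) so that $\phi(\Gamma)$ is \emph{dense in $\T^n$}. Since $\T^n$ sits in $L^0(\T)$ as the functions constant on $2^{-n}$-dyadic intervals and these tori exhaust $L^0(\T)$, this finite-dimensional perturbation is exactly what witnesses the open density condition $C_1$. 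The key lemma you are missing is Lemma~\ref{l:densehatGamma}: for unbounded abelian $\Gamma$, tuples of characters with $\phi(\Gamma)$ dense in $\T^n$ form a dense $G_\delta$ in $\hat\Gamma^n$. This is where unboundedness is actually used in the density step, not in a Glasner-type embedding.
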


For our other two examples, we are unable to decide whether generically one obtains a single group.
\begin{question} \label{q:same-group}
For $G$ either $\Aut(\mu)$ or $\Iso(\bU)$, does there exist a Polish group $H$ such that for the generic $\pi \in \Hom(\Z, G)$, $\cl{\pi(\Z)} \cong H$? In particular, is this true for $H = L^0(\T)$?
\end{question}
This question was also raised (for $\Aut(\mu)$) by Solecki and by Pestov. Solecki~\cite{Solecki2012} recently found some evidence that the answer might be positive for $\Aut(\mu)$: he proved that the closed subgroup generated by a generic $\pi \in \Hom(\Z, \Aut(\mu))$ is a continuous homomorphic image of a closed subspace of $L^0(\R)$ and contains an increasing union of finite dimensional tori whose union is dense.

A related problem was posed by Glasner and Weiss~\cite{Glasner2005a}: is $\cl{\pi(\Z)}$ a L{\'e}vy group for the generic $\pi \in \Hom(\Z, \Aut(\mu))$? Recall that $L^0(\T)$ is L{\'e}vy and that every L{\'e}vy group is extremely amenable (see \cite{Pestov2006} for background on L{\'e}vy groups).

Even though the questions above remain open, using Theorem~\ref{thi:gdelta}, we can still show that extreme amenability is generic.
\begin{theorem} \label{thi:ext-amen-examples}
Let $\Gamma$ be a countable, unbounded, abelian group and $G$ be one of $\Aut(\mu)$ or $\Iso(\bU)$. Then the set
\[
\set{\pi : \cl{\pi(\Gamma)} \cong L^0(\T)}
\]
is dense in $\Hom(\Gamma, G)$ and therefore, the generic $\cl{\pi(\Gamma)}$ is extremely amenable.
\end{theorem}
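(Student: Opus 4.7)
The plan is to reduce Theorem~\ref{thi:ext-amen-examples} to a density statement via Theorem~\ref{thi:gdelta}. Since Glasner proved that $L^0(\T)$ is extremely amenable, the set $D:=\{\pi:\cl{\pi(\Gamma)}\cong L^0(\T)\}$ is contained in $E:=\{\pi:\cl{\pi(\Gamma)}\text{ is extremely amenable}\}$, and Theorem~\ref{thi:gdelta} says $E$ is $G_\delta$ in $\Hom(\Gamma,G)$. So once $D$ is shown dense, both conclusions of the theorem follow: $D$ is dense by construction, and $E\supseteq D$ is a $G_\delta$ containing a dense set, hence comeager.

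To prove density of $D$, I would first establish the following group-theoretic fact: for every countable unbounded abelian $\Gamma$ there is a continuous homomorphism $\sigma:\Gamma\to L^0(\T)$ with dense image, i.e.\ $L^0(\T)$ is $\Gamma$-monothetic. For $\Gamma=\Z$ this is already Glasner's monothetic theorem. In general, the unboundedness of $\Gamma$ provides characters $\Gamma\to\T$ with arbitrarily large image; diagonalising a suitable countable family of them into $L^0([0,1],\T)$ and using the divisibility of $L^0(\T)$ (inherited from $\T$) to extend partial maps, one arranges the joint image to be dense in $L^0(\T)$. Fix also, for each $G$, a closed embedding $\iota:L^0(\T)\hookrightarrow G$: for $G=\Aut(\mu)$ realise $(X,\mu)=(Y\times\T,\nu\otimes\lambda_\T)$ and let $f\in L^0(Y,\T)$ act by the fibre-rotation $(y,t)\mapsto (y,f(y)t)$; for $G=\Iso(\bU)$ use any of the standard closed embeddings provided by the Kat\v{e}tov construction (or by Uspenskii universality). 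The composition $\iota\circ\sigma$ is then a single member of $D$, since $\cl{(\iota\circ\sigma)(\Gamma)}=\iota(L^0(\T))\cong L^0(\T)$.

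The real content of the argument is producing such a representation \emph{inside every non-empty open} $U\subseteq\Hom(\Gamma,G)$. For $G=\Aut(\mu)$, I would combine the density of representations factoring through a finite abelian quotient of $\Gamma$ (a Rokhlin / Foreman--Weiss type approximation) with a construction that realises such a periodic approximation inside a conjugate of $\iota(L^0(\T))$: choose the base $(Y,\nu)$ matched to the orbit partition of the periodic representation so that its action becomes fibre-rotation by roots of unity, then thicken it using $\sigma$ to land densely in $\iota(L^0(\T))$. For $G=\Iso(\bU)$, a basic open neighbourhood is specified by finitely many distance constraints between points $\pi(\gamma_i)x_j$; such a finite equivariant configuration can first be realised equivariantly for a finite cyclic quotient of $\Gamma$ inside an ambient $L^0(\T)$-action, and then extended to a full $\Gamma$-action with $\cl{\pi(\Gamma)}=\iota(L^0(\T))$ using the ultrahomogeneity of $\bU$. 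The main obstacle in both cases is arranging that the closure of the approximating image is \emph{exactly} $\iota(L^0(\T))$ and not a strictly larger closed abelian subgroup of $G$; this forces $\sigma$ and the approximation to be chosen in tandem, so that the dense-image property of $\sigma$ is preserved under the perturbation used to enter $U$.
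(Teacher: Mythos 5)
Your logical skeleton is exactly right and matches the paper: since $D := \{\pi : \cl{\pi(\Gamma)} \cong L^0(\T)\}$ is contained in the $G_\delta$ set $E := \{\pi : \cl{\pi(\Gamma)} \text{ extremely amenable}\}$ (by Theorem~\ref{thi:gdelta} and Glasner's theorem), density of $D$ yields both assertions. The fact that some continuous $\sigma \colon \Gamma \to L^0(\T)$ has dense image is the paper's Proposition~\ref{p:genericL0}(i); your ``divisibility'' sketch of that step is too vague to assess, but the claim itself is correct and is proved in the paper via approximation in the increasing family $K_n \cong \T^{2^n}$ of piecewise-constant functions together with Lemma~\ref{l:densehatGamma}.

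The genuine gap is in your $\Aut(\mu)$ argument. You propose to combine ``density of representations factoring through a finite abelian quotient of $\Gamma$'' with a thickening construction. But this density statement is false for general unbounded abelian $\Gamma$: the hypothesis of the theorem allows $\Gamma = \Q$, $\Gamma = \Z[1/p]$, or $\Gamma = \Z(p^\infty)$, none of which has a nontrivial finite quotient, so the only representation factoring through a finite quotient is the trivial one, which is certainly not dense. (The Rokhlin-type approximation you cite works for finitely generated subgroups $\Delta \leq \Gamma$, not for $\Gamma$ itself, and you would then need to explain how to extend the perturbed action from $\Delta$ to all of $\Gamma$ while keeping control of $\cl{\pi(\Gamma)}$; your sketch does not address this.) The paper avoids the approximation problem entirely for $\Aut(\mu)$ by a different reduction: it uses the standard fact that for abelian $\Gamma$ the conjugacy class of \emph{any} free action is dense (Foreman--Weiss), so it suffices to exhibit a single free action $\pi$ with $\cl{\pi(\Gamma)} \cong L^0(\T)$. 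That one free example is obtained by composing $\sigma \colon \Gamma \to L^0(\T)$ (from Proposition~\ref{p:genericL0}) with the chain $L^0(\T) \hookrightarrow U(\mcH) \hookrightarrow \Aut(\mu)$, the second arrow being the Gaussian construction, and the paper verifies directly that actions arising this way are free. Note that this freeness is essential and is exactly what your fibre-rotation embedding $(y,t) \mapsto (y, f(y)t)$ does not automatically provide: if $\sigma(\gamma)$ is the identity on a set of positive $\nu$-measure, the corresponding fibre rotation has a positive-measure fixed-point set. So even if you adopted the paper's reduction, the fibre-rotation embedding would require an additional argument (or a more careful choice of $\sigma$) to ensure freeness, whereas the Gaussian embedding gives it for free.

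Your $\Iso(\bU)$ outline is in the same spirit as the paper's Lemma~\ref{l:L0denseU}: approximate on a finitely generated $\Delta \leq \Gamma$ by a finite-range action (Pestov--Uspenskij, using residual finiteness of $\Delta$, not of $\Gamma$), embed the finite quotient into some $\T^n$ (not necessarily $\T$; finite abelian groups need not be cyclic, so drop ``cyclic''), induce up to a $\T^n$-action via Lemma~\ref{l:metric-ind}, pass to $L^0(\T^n) \actson L^1(Y)$, embed in $\Iso(\bU)$ by Kat\v etov, and finally perturb using Proposition~\ref{p:genericL0} to get a dense image of $\Gamma$ inside a \emph{closed} copy of $L^0(\T^n) \cong L^0(\T)$. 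This last point resolves the ``tandem'' obstacle you flag: because the copy of $L^0(\T^n)$ is closed in $\Iso(\bU)$ and the perturbation is chosen inside it, $\cl{\pi(\Gamma)}$ cannot be larger.
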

In the special case $\Gamma = \Z$ and $G = \Iso(\bU)$, this answers a question of Glasner and Pestov (\cite{Pestov2007a}*{Question 8}). The methods used in the proof of the theorem also show that the \emph{generic monothetic group} (that is, the generic completion of $\Z$) is extremely amenable (see Theorem~\ref{t:extamisgdelta}).


A natural question that arises is how one can distinguish the generic groups $\cl{\pi(\Gamma)}$ for different abelian groups $\Gamma$. It turns out that, at least for finitely generated free abelian groups, this is impossible.
\begin{theorem}
Let $G$ be either $\Aut(\mu)$ or $\Iso(\bU)$, let $P$ be a property of abelian Polish groups such that the set $\set{\pi \in \Hom(\Z, G) : \cl{\pi(\Z)} \text { has } P}$ has the Baire property, and let $d$ be a positive integer. Then the following are equivalent:
\begin{enumerate} \romanenum
\item for the generic $\pi \in \Hom(\Z, G)$, $\cl{\pi(\Z)}$ has property $P$;
\item for the generic $\pi \in \Hom(\Z^d, G)$, $\cl{\pi(\Z^d)}$ has property $P$.
\end{enumerate}
\end{theorem}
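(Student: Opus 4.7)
The plan is to relate generic properties of $\cl{\pi(\Z^d)}$ in $\Hom(\Z^d, G)$ to those of $\cl{\sigma(\Z)}$ in $\Hom(\Z, G) = G$ through the restriction map $\Res \colon \Hom(\Z^d, G) \to G$, $\pi \mapsto \pi(e_1)$, where $e_1 \in \Z^d$ is the first standard basis vector. The core observation is that for the generic $\pi$, the closure $\cl{\pi(\Z^d)}$ collapses to the cyclic closure $\cl{\langle \pi(e_1) \rangle}$, reducing property $P$ of $\cl{\pi(\Z^d)}$ to property $P$ of $\cl{\sigma(\Z)}$ for a generic $\sigma \in \Hom(\Z, G)$.

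Two ingredients enter. First, the centralizer-minimality result established earlier in the paper (the extension of Chacon--Schwartzbauer): for the generic $g \in G$, the centralizer of $g$ in $G$ equals $\cl{\langle g \rangle}$. Second, a category-preservation lemma for $\Res$: a subset $A \subseteq G$ with the Baire property is comeager if and only if $\Res^{-1}(A)$ is comeager in $\Hom(\Z^d, G)$. Granted these, for the generic $\pi$ the centralizer of $\pi(e_1)$ equals $\cl{\langle \pi(e_1)\rangle}$; since each $\pi(e_i)$ commutes with $\pi(e_1)$ by definition of $\Hom(\Z^d, G)$, we get $\pi(e_i) \in \cl{\langle \pi(e_1)\rangle}$ for every $i$, and hence $\cl{\pi(\Z^d)} = \cl{\langle \pi(e_1)\rangle}$. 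The implication $(i) \Rightarrow (ii)$ then follows by pulling $A_P := \{\sigma : \cl{\sigma(\Z)} \text{ has } P\}$ back through $\Res$; the implication $(ii) \Rightarrow (i)$ uses the reverse direction of the category-preservation lemma, which is where the Baire-property hypothesis on $A_P$ is needed to read off its category from that of $\Res^{-1}(A_P)$.

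The main obstacle is the category-preservation lemma. The cleanest route is to verify that $\Res$ is a continuous open surjection: continuity and surjectivity are immediate (take $\pi(e_i) = 1$ for $i \geq 2$), but openness amounts to the approximation property that, given a commuting tuple $(g_1, \ldots, g_d)$ lying in a basic open set $(V_1 \times \cdots \times V_d) \cap \Hom(\Z^d, G)$ and any $g_1' \in V_1$ sufficiently close to $g_1$, there exist $g_i' \in V_i$ close to $g_i$ for $i \geq 2$ with $(g_1', g_2', \ldots, g_d')$ still a pairwise-commuting tuple. For $\Aut(\mu)$ this should be arrangeable by dyadic approximation and a simultaneous small conjugation, and for $\Iso(\bU)$ by Kat\v{e}tov-style amalgamation of finite metric data, exploiting the universal-homogeneity of the Urysohn space.
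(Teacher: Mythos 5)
The overall plan is reasonable and agrees with the paper's architecture at the level of "establish that $\cl{\pi(\Z^d)}$ generically collapses to $\cl{\langle \pi(e_1) \rangle}$, then pull the property $P$ through the restriction map $\Res$," but there is a genuine gap at the crucial step: the proposed proof that $\Res$ is category-preserving via \emph{openness} of $\Res$ is wrong, because $\Res$ is not open.

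Here is a concrete obstruction for $G = \Aut(\mu)$ and $d = 2$. Fix a circle realization of the measure space, and let $\T_0 \leq \Aut(\mu)$ denote the compact group of rotations. Take $\pi_0 \in \Hom(\Z^2, \Aut(\mu))$ with $\pi_0(e_1) = 1$ and $\pi_0(e_2) = T$, where $T \notin \T_0$; choose $V_1$ any small neighborhood of $1$ and $V_2$ a neighborhood of $T$ with $V_2 \cap \T_0 = \emptyset$, and put $U = (V_1 \times V_2) \cap \Hom(\Z^2, \Aut(\mu))$. Then $1 = \Res(\pi_0) \in \Res(U)$. However, for every irrational rotation $R_\alpha$ (and these accumulate at $1$, so any neighborhood of $1$ contains infinitely many of them), the centralizer of $R_\alpha$ in $\Aut(\mu)$ is exactly $\T_0$; hence any $g_2$ commuting with $R_\alpha$ lies in $\T_0$ and thus misses $V_2$. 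So $R_\alpha \notin \Res(U)$, and $\Res(U)$ is not a neighborhood of $1$. Thus $\Res$ is not open, and the ``approximation property'' you describe (that every $g_1'$ near $g_1$ admits commuting partners near $g_2, \ldots, g_d$) is false. The distinction between ``open'' and ``category-preserving'' is exactly the point: the paper's Proposition~\ref{p:somewheredense} only requires $\Res(U)$ to be \emph{somewhere dense} for each nonempty open $U$, and establishing even this weaker property requires the machinery the paper develops (the density of representations with $\cl{\pi(\Z^d)} = \cl{\pi(\Z^{d-1})}$, via Lemma~\ref{l:commuting-categorypreserving} and Theorem~\ref{t:generic-monothetic}, which in turn rest on the Rokhlin lemma for $\Z^d$-actions and the Pestov--Uspenskij approximation theorem).

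Two further remarks. First, even granting category preservation, your route to the collapse $\cl{\pi(\Z^d)} = \cl{\langle\pi(e_1)\rangle}$ via the centralizer theorem is circular within the paper's framework: Theorem~\ref{t:centralizers} is itself proved \emph{from} category preservation of $\Res$ plus Kuratowski--Ulam, not the other way around, so you would need an independent argument for one of the two. Second, your (ii)$\Rightarrow$(i) argument via the ``reverse direction'' of category preservation is a legitimate alternative to the paper's proof (which instead invokes the dense-conjugacy-class zero-one law): for a \emph{surjective} category-preserving $f$, $f^{-1}(A)$ comeager and $A$ having the Baire property does force $A$ comeager, essentially because $f^{-1}$ takes nonempty open sets to nonempty open sets and preserves meagerness. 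This part of the proposal is correct and slightly more direct, but it does not repair the central gap.
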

In particular, the theorem implies that the generic $\cl{\pi(\Z^d)}$ is monothetic. It also implies that if the answer of Question~\ref{q:same-group} is positive for $\Z$, then it is positive for any $\Z^d$ and moreover, one obtains the same generic group. Finally, in the case $G = \Aut(\mu)$, a result of Ageev allows us to replace $\Z^d$ by any countable abelian $\Gamma$ containing an infinite cyclic subgroup (cf. Subsection~\ref{ss:AutM}).

Next we apply our techniques to studying stabilizers of representations. If $\pi \in \Hom(\Gamma, G)$, denote by $\mcC(\pi)$ the \df{centralizer} of $\pi$, i.e.,
\[
\mcC(\pi) = \set{g \in G : g \pi(\gamma) = \pi(\gamma) g \text{ for all } \gamma \in \Gamma}.
\]
Then $\mcC(\pi)$ is a closed subgroup of $G$ and $\cl{\pi(\Gamma)} \leq \mcC(\pi)$ if $\Gamma$ is abelian. Our next result shows that, for a generic $\pi$, we often have equality.
\begin{theorem} \label{thi:centralizers}
Let $G$ be either $\Aut(\mu)$ or $\Iso(\bU)$ and $\Gamma$ be a countable, non-trivial, torsion-free, abelian group. Then for a generic $\pi \in \Hom(\Gamma, G)$, $\mcC(\pi) = \cl{\pi(\Gamma)}$.
\end{theorem}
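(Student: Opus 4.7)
Since $\Gamma$ is abelian, the inclusion $\cl{\pi(\Gamma)} \subseteq \mcC(\pi)$ always holds, so the task is to show the reverse inclusion is generic. Setting
\[
Z = \{\pi \in \Hom(\Gamma, G) : \mcC(\pi) \subseteq \cl{\pi(\Gamma)}\},
\]
the plan is to prove $Z$ is comeager by (i) showing it has the Baire property (in fact is $G_\delta$), and (ii) showing it is dense.

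For (i), fix a countable basis $\{V_n\}$ for $G$. The condition $\mcC(\pi) \subseteq \cl{\pi(\Gamma)}$ is equivalent to: for every $n$, either $V_n \cap \pi(\Gamma) \neq \emptyset$ or $V_n \cap \mcC(\pi) = \emptyset$. The first disjunct is open in $\pi$ (a countable union of the open conditions $\pi(\gamma) \in V_n$). For the second, one exploits that $\mcC(\pi) = \bigcap_{\gamma \in \Gamma} \{g : g\pi(\gamma) = \pi(\gamma)g\}$ is a countable intersection of closed sets varying continuously with $\pi$; approximating $V_n$ from within by a sequence of closed balls in a compatible complete metric on $G$ recasts the second disjunct as a countable intersection of open conditions. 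The non-local-compactness of $G$ is a minor technical complication handled by exhausting $V_n$ by suitable closed subsets.

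For (ii), density, the plan is to invoke Theorem~\ref{thi:ext-amen-examples}, which provides a dense set of $\pi$ with $\cl{\pi(\Gamma)} \cong L^0(\T)$, and to refine the construction there so that the centralizer equality is built in. Concretely, for $G = \Aut(\mu)$, we embed $\Gamma$ densely into $L^0(\T)$ (possible because $\Gamma$ is a countable torsion-free abelian group, using Pontryagin duality and monothety of $L^0(\T)$) and realize $L^0(\T)$ by its tautological measure-preserving translation action on itself with Haar measure; the centralizer of this action inside $\Aut(\mathrm{Haar})$ is exactly $L^0(\T)$. For $G = \Iso(\bU)$, the analogous model is obtained by embedding $L^0(\T)$ as a free isometric subgroup of $\Iso(\bU)$ via Kat\v{e}tov extensions. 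Torsion-freeness of $\Gamma$ is essential: it guarantees the embedding $\Gamma \hookrightarrow L^0(\T)$ is injective with dense image, so that the pulled-back $\Gamma$-action inherits the rigidity of the ambient $L^0(\T)$-action.

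The principal obstacle is verifying that the tautological model has centralizer exactly $\cl{\pi(\Gamma)}$ inside the \emph{full} ambient group $G$, not merely inside some smaller automorphism group of the structure used in the construction. For $\Gamma = \Z$, $G = \Aut(\mu)$, this is (a version of) Chacon--Schwartzbauer; its extension to arbitrary torsion-free abelian $\Gamma$ requires a freeness/spectral argument, while the $\Iso(\bU)$ case additionally uses the homogeneity and universality of $\bU$ to rule out unexpected isometric symmetries commuting with the tautological action.
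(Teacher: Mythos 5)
Your proposal has genuine gaps at both stages, and it takes a fundamentally different route from the paper's.

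For step (i): the claim that $Z = \{\pi : \mcC(\pi) \subseteq \cl{\pi(\Gamma)}\}$ is $G_\delta$ is not justified. Decomposing the condition as ``for all $n$, either $V_n \cap \pi(\Gamma) \neq \emptyset$ or $V_n \cap \mcC(\pi) = \emptyset$,'' the second disjunct is only co-analytic: the set $\{\pi : \exists g \in V_n \ \forall \gamma \ g\pi(\gamma) = \pi(\gamma)g\}$ is a projection of a closed set, hence analytic, with co-analytic complement. Your suggestion to exhaust $V_n$ by closed subsets would recast this as a countable intersection of open conditions only if those closed sets were compact, but $\Aut(\mu)$ and $\Iso(\bU)$ are not locally compact, so there is no compactness to exploit. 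Co-analytic sets do have the Baire property, but a Baire-measurable dense set need not be comeager, so falling back to the Baire property does not salvage the plan: you still need a comeagerness argument, and density plus Baire property is insufficient. For step (ii): you flag as ``the principal obstacle'' the verification that the tautological model has centralizer exactly $\cl{\pi(\Gamma)}$ inside the full ambient group---but this is the actual content of the theorem, not a deferrable technicality; for $\Gamma = \Z$ and $G = \Aut(\mu)$ it is the Chacon--Schwartzbauer theorem, which requires nontrivial ergodic theory. Worse, the specific $\Aut(\mu)$-model you propose, the translation action of $L^0(\T)$ on itself with Haar measure, does not exist: $L^0(\T)$ is not locally compact and carries no translation-invariant probability measure, so there is no ``$\Aut(\mathrm{Haar})$'' to work inside.

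The paper avoids both problems by an entirely different mechanism. It identifies $\mcC(\pi)$ with the fiber $\Res^{-1}(\{\pi\})$ of the restriction map $\Res \colon \Hom(\Gamma \times \Z, G) \to \Hom(\Gamma, G)$, shows that $\Res$ is category-preserving (Lemma~\ref{l:commuting-categorypreserving}, fed by the generically-monothetic statement Theorem~\ref{t:generic-monothetic}), and then applies the generalized Kuratowski--Ulam theorem for category-preserving maps (Theorem~\ref{t:KU2}). From $\forall^* \psi \in \Hom(\Gamma \times \Z, G) \ \cl{\psi(\Gamma \times \Z)} = \cl{\psi(\Gamma)}$ one deduces $\forall^*\pi \ \forall^* g \in \mcC(\pi) \ g \in \cl{\pi(\Gamma)}$, and since $\cl{\pi(\Gamma)}$ is a closed subgroup of the Polish group $\mcC(\pi)$, comeagerness inside $\mcC(\pi)$ forces equality. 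This ``fiber equals centralizer'' trick entirely bypasses both the descriptive-complexity issue with $Z$ and any explicit centralizer computation for a model action.
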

For $G=\Aut(\mu)$ and $\Gamma= \Z$, this result seems to have been stated explicitly for the first time in \cite{Akcoglu1970}, where it is attributed to Chacon-Schwartzbauer \cite{Chacon1969}\footnote{In an ealier version of this paper this result was incorrectly attributed to King, who proved a stronger statement, namely, that the conclusion of the theorem holds for rank-$1$ transformations. We are grateful to S\l awomir Solecki for making us aware of the papers \cite{Chacon1969} and \cite{Akcoglu1970}.}. Our approach is different in that it is based on maps between suitable Polish spaces which behave well with respect to Baire category notions, which we call \df{category-preserving maps}, and a generalization of the Kuratowski--Ulam theorem (see Appendix~\ref{a:category}). Such maps were first used in a similar setting by King~\cite{King2000}, were explicitly defined by Ageev~\cite{Ageev2000}, and also feature prominently in the papers Tikhonov~\cite{Tikhonov2003} and Stepin--Eremenko~\cite{Stepin2004}. Tikhonov~\cite{Tikhonov2003}, who proved that the generic action of $\Z^d$ can be embedded in a flow, was the first to consider actions of abelian groups other than $\Z$.

We conclude with an amusing application of the methods developed in this paper. All three target groups that we consider ($U(\mcH)$, $\Aut(\mu)$, $\Iso(\bU)$) are known to be L\'evy and therefore extremely amenable. Using Theorem~\ref{thi:gdelta}, we are able to give a ``uniform'' proof for their extreme amenability that does not pass through the fact that they are L\'evy groups (but it does use the extreme amenability of $L^0(K)$ for compact groups $K$, and those are L\'evy).
\begin{theorem} \label{thi:free-compact}
Let $G$ be either of $U(\mcH)$, $\Aut(\mu)$, $\Iso(\bU)$; denote by $U(n)$ the unitary group of dimension $n$ and by $\F_{\infty}$ the free group on $\aleph_0$ generators. Then the set
\[
\set{\pi \in \Hom(\F_\infty, G) : \exists n \ \cl{\pi(\F_\infty)} \cong L^0(U(n))}
\]
is dense in $\Hom(\F_\infty, G)$.
\end{theorem}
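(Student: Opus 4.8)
The plan is to produce, for each of the three target groups $G$, a sequence of representations $\pi \in \Hom(\F_\infty, G)$ with $\cl{\pi(\F_\infty)} \cong L^0(U(n))$ for suitable $n$, in such a way that these representations are dense. The starting observation is that $L^0(U(n))$ itself embeds as a closed subgroup into each of $U(\mcH)$, $\Aut(\mu)$ and $\Iso(\bU)$: indeed $L^0(U(n))$ is the unitary group of the von Neumann algebra $L^\infty([0,1]) \otimes M_n(\C)$, which acts on the separable Hilbert space $L^2([0,1]) \otimes \C^n$, giving the embedding into $U(\mcH)$; for $\Aut(\mu)$ one uses that $L^0(U(n))$ acts on the standard probability space $[0,1] \times U(n)$ (with Haar measure on the fibers) by fiberwise left translation, measure-preservingly; and for $\Iso(\bU)$ one invokes the general fact that every Polish group embeds into $\Iso(\bU)$ as a closed subgroup (Uspenskij). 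So it suffices to realize, inside $L^0(U(n))$ (or rather inside an abstract copy of it sitting in $G$), a representation of $\F_\infty$ whose image is dense.

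Now $\F_\infty$ is free, so a homomorphism $\F_\infty \to L^0(U(n))$ is just an arbitrary sequence $(u_k)_{k \in \N}$ of elements of $L^0(U(n))$, and its image is dense iff the $u_k$ topologically generate $L^0(U(n))$. Here I would use that $L^0(U(n))$ is \emph{topologically finitely generated}: in fact I claim it is topologically generated by a single suitably chosen element together with the constant functions, or more robustly, that a generic pair (or finite tuple) of elements of $L^0(U(n))$ generates a dense subgroup. This should follow from the analogue of the Glasner monotheticity argument: $U(n)$ is a compact connected Lie group that is topologically finitely generated (two generic elements generate a dense subgroup), and for such $K$ one shows that a generic element of $L^0(K)$, together with finitely many constants realizing a dense subgroup of $K$, generates $L^0(K)$ — the key point being that the "fiberwise" values can be made to vary enough over $[0,1]$ to separate the space. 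Alternatively, one can cite or adapt the known fact that $L^0(K)$ for $K$ a compact metric group is topologically finitely generated whenever $K$ is (this is in the spirit of Glasner's work and of the techniques already used in this paper for $L^0(\T)$). Granting this, fix a finite tuple $(v_1, \dots, v_m)$ in $L^0(U(n))$ generating a dense subgroup, and for the remaining generators of $\F_\infty$ send them to the identity; this gives a representation $\pi_0$ with $\cl{\pi_0(\F_\infty)} \cong L^0(U(n))$.

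It remains to arrange density of such representations in $\Hom(\F_\infty, G) = G^\N$. Fix a basic open set, i.e., finitely many indices $k_1 < \dots < k_r$ and open sets $V_1, \dots, V_r \subseteq G$; we must find $\pi$ with $\pi(x_{k_i}) \in V_i$ and $\cl{\pi(\F_\infty)} \cong L^0(U(n))$ for some $n$. Here I would use the standard approximation results available in each $G$: in $\Aut(\mu)$, the periodic (indeed the $U(n)$-valued, or the rational-tower) transformations are dense and any finite tuple of commuting or even arbitrary elements can be approximated by elements coming from a common copy of $L^0(U(n))$ — this is where the Rokhlin lemma / tower machinery enters; in $U(\mcH)$, finite-rank perturbations let one approximate any tuple of unitaries by ones lying in a common block of the form $L^\infty \otimes M_n$; and in $\Iso(\bU)$, the density of finitely-supported partial isometries and the Katětov extension machinery let one realize any finite configuration inside an embedded copy of a compact (hence inside $L^0(U(n))$-type) group. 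Concretely, approximate the tuple $(V_1, \dots, V_r)$ by elements lying in a single embedded copy $L \cong L^0(U(n))$ of $L^0(U(n))$ in $G$, then modify the representation on the unused generators of $\F_\infty$ (of which there are infinitely many) so that the whole image becomes dense in $L$ — possible by the topological finite generation of $L^0(U(n))$ from the previous paragraph. This yields the desired $\pi$, proving density; the final clause (that the generic $\cl{\pi(\F_\infty)}$ is extremely amenable) then follows immediately from Theorem~\ref{thi:gdelta} together with the extreme amenability of $L^0(U(n))$ for compact $U(n)$.

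The main obstacle I anticipate is the topological finite generation of $L^0(U(n))$ together with the approximation step inside $\Aut(\mu)$ and $\Iso(\bU)$: one needs that \emph{arbitrary} finite tuples in $G$ (not just commuting ones) can be simultaneously approximated by tuples drawn from a single copy of $L^0(U(n))$, and simultaneously that $L^0(U(n))$ admits a dense finitely generated free subgroup whose generators can be prescribed to match the approximants. The first half is essentially a Rokhlin-tower / Katětov-amalgamation argument and should go through with care; the second half is the genuinely new input, a several-variable strengthening of Glasner's monotheticity theorem for $L^0$ of a compact connected Lie group, and proving it cleanly is where most of the work lies.
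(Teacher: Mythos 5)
Your high-level structure matches the paper's: embed $L^0(U(m))$ as a closed subgroup of $G$, approximate any finite tuple in $G$ by a tuple lying in such an embedded copy, then use the remaining (infinitely many) generators of $\F_\infty$ to make the image dense. However, there are two points worth flagging, one cosmetic and one substantive.

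The cosmetic one: you reach for topological \emph{finite} generation of $L^0(U(n))$, calling it ``the genuinely new input.'' It is not needed. Since $\F_\infty$ has $\aleph_0$ generators and every Polish group is generically $\aleph_0$-generated (the set of sequences generating a dense subgroup is automatically dense $G_\delta$), once you have placed finitely many of the $\pi(a_i)$ inside a closed copy of $L^0(U(m))$ you can always adjust the remaining generators so that the image is dense in that copy. No finite-generation result is required, and the paper does not use one; it only remarks in passing that generic $2$-generation of $L^0(K)$ would be available if one wished to work with pairs.

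The substantive gap is in the approximation step, which you describe as ``should go through with care'' but is where essentially all the work in the paper lives. You propose to directly approximate an arbitrary tuple by one lying in an embedded copy of $L^0(U(n))$, but this is not how the argument runs, and it is not clear it can be made to run that way without the intermediate reduction the paper actually uses: one first approximates the given tuple by one that generates a \emph{finite} group (in $\Iso(\bU)$ this uses residual finiteness of $\F_n$ together with the Pestov--Uspenskij approximation theorem; in $\Aut(\mu)$ it uses the density of finite actions, coming from a locally finite dense subgroup; in $U(\mcH)$ one instead approximates by a tuple preserving a fixed finite-dimensional subspace). Once the tuple generates a finite group $\Delta$, one embeds $\Delta$ into $U(m)$ via its left regular representation, and only then extends to an action of $L^0(U(m))$ -- in $U(\mcH)$ by passing from $\mcH(m)$ to $L^2([0,1],\mcH(m))$, in $\Aut(\mu)$ via the measure-preserving action of $L^0(U(m))$ on $Y\times U(m)$ with fiberwise translation, and in $\Iso(\bU)$ via the induction Lemma~\ref{l:metric-ind} and Kat\v{e}tov. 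Without passing through a finite (or, in the unitary case, finite-dimensional) intermediate group, your ``Rokhlin-tower / Kat\v{e}tov-amalgamation argument'' has no obvious target group $U(m)$ to aim at, and it is far from clear that a generic tuple in $\Aut(\mu)$ or $\Iso(\bU)$ can be pushed into an $L^0(U(n))$ copy directly. This two-step reduction is the heart of the proof and is missing from your proposal.
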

\begin{cor}[Gromov--Milman \cite{Gromov1983}; Giordano--Pestov \cite{Giordano2007}; Pestov \cite{Pestov2002}]
The three groups $U(\mcH)$, $\Aut(\mu)$, and $\Iso(\bU)$ are extremely amenable.
\end{cor}

\noindent {\bf Notation.} All groups are written multiplicatively and the neutral element is always denoted by $1$. When $G$ is a group and $A \sub G$, we denote by $\langle A \rangle$ the subgroup of $G$ generated by $A$. $(X, \mu)$ is an atomless standard probability space; we write $\Aut(X, \mu)$ or simply $\Aut(\mu)$ to denote the group of measure-preserving bijections of $X$ modulo a.e. equality (equipped with its usual Polish topology). $\mcH$ is a separable, infinite-dimensional, complex Hilbert space and $U(\mcH)$ denotes its unitary group endowed with the strong operator topology. If $G$ is a Polish group, $L^0(G)$ denotes the group of all (equivalence classes of) measurable maps from $(X,\mu)$ to $G$ with pointwise multiplication and the topology of convergence in measure (see \cite{Kechris2010}*{Section~19} for basic facts about those groups). Sometimes, when we need to explicitly mention the source measure space, we may write $L^0(X, \mu, G)$. $\Z$ denotes the infinite cyclic group; $\Z(n)$ is the finite cyclic group of order $n$; $\T$ is the multiplicative group of complex numbers of modulus $1$; $\F_n$ is the free group with $n$ generators ($n \le \aleph_0$; if $n=\aleph_0$ we use the notation $\F_{\infty}$); $U(m)$ is the unitary group of an $m$-dimensional Hilbert space.

\smallskip

\noindent {\bf Acknowledgements.} The research that led to this paper was initiated in the Fields Institute in Toronto during the Thematic Program on Asymptotic Geometric Analysis and we would like to thank the Institute as well as the organizers of the program for the extended hospitality and the excellent working conditions they provided. We are grateful to Vladimir Pestov for stimulating discussions on the topic of this paper, to Oleg Ageev for pointing out a mistake in a preliminary draft, and to S\l awomir Solecki for interesting bibliographical information; finally we would like to thank the anonymous referee for several corrections, pertinent historical remarks, and for drawing our attention to some relevant literature. Work on this project was partially supported by the ANR network AGORA, NT09-461407.

\section{Preliminaries on abelian groups}
In this section, we prove several preliminary lemmas about abelian groups that will be useful in the sequel.

By a classical theorem of Pr\"ufer (see \cite{Fuchs1970}*{Theorem~17.2}), every bounded countable abelian group is a direct sum of cyclic groups, moreover, the function
\[
m_\Gamma \colon \set{p^n : p \text{ prime}, n \in \N^+} \to \N \cup \set{\infty}
\]
indicating the number of summands isomorphic to $\Z(p^n)$ is a complete isomorphism invariant for bounded abelian groups; note that for such groups, $m_{\Gamma}$ has only finitely many non-zero values.

Consider the following property of a bounded abelian group:
\begin{equation} \tag{$\ast$} \label{eq:ast}
\forall p \forall n \ m_\Gamma(p^n) > 0 \implies \exists k \geq n \ m_\Gamma(p^k) = \infty.
\end{equation}
The reason we are interested in it is that a bounded abelian group without \eqref{eq:ast} cannot be densely embedded in an extremely amenable group. More precisely, we have the following.
\begin{prop} \label{p:discon}
Let $\Gamma$ be a bounded abelian group that does not have property \eqref{eq:ast}. Then any Hausdorff topological group $G$ in which $\Gamma$ can be embedded as a dense subgroup has a non-trivial open subgroup of finite index.
\end{prop}
\begin{proof}
Let $p$ and $n$ be such that $0 < m_\Gamma(p^n) < \infty$ and $m_\Gamma(p^m) = 0$ for all $m > n$. Let $k$ be the product of all orders of elements of $G$ not divisible by $p$ and consider the closed subgroup $H \leq G$ given by
\[
H = \set{g \in G : g^{kp^{n-1}} = 1}.
\]
Then $H \cap \Gamma$ has finite index in $\Gamma$ (the quotient $\Gamma/(H \cap \Gamma)$ being isomorphic to $\Z(p)^{m_\Gamma(p^n)}$), showing that $H$ has finite index in $G$ and is therefore open (if $\Gamma = \bigcup_{i=1}^n \gamma_i (H \cap \Gamma)$, then $\bigcup_{i=1}^n \gamma_i H$ is closed and contains $\Gamma$).
\end{proof}

If $\Gamma$ is bounded, denote by $H_\Gamma$ the subgroup of $\T$ generated by
\[
\set{\exp(2\pi i / p^n) : m_\Gamma(p^n) = \infty}
\]
and note that $H_\Gamma$ is finite, cyclic, and that if $\Gamma$ has property \eqref{eq:ast}, every character of $\Gamma$ takes values in $H_\Gamma$.

For any countable abelian group $\Gamma$, denote by $\hat \Gamma$ its dual, i.e., the group of all homomorphisms $\Gamma \to \T$, and recall that as $\Gamma$ is discrete, $\hat \Gamma$ is compact.
\begin{lemma} \label{l:densehatGamma}
Let $\Gamma$ be a countable abelian group and let $e \colon \Gamma \to \Gamma^n$ be the diagonal embedding.
\begin{enumerate} \romanenum
\item \label{i:dhG:ub} If $\Gamma$ is unbounded, then the set
\[
\set{\phi \in \hat \Gamma^n : \phi(e(\Gamma)) \text{ is dense in } \T^n}
\]
is dense $G_\delta$ in $\hat \Gamma^n$.

\item \label{ii:dhG:*} If $\Gamma$ is bounded and has property \eqref{eq:ast}, then the set
\[
\set{\phi \in \hat \Gamma^n : \phi(e(\Gamma)) = H_\Gamma^n}
\]
is dense $G_\delta$ in $\hat \Gamma^n$.
\end{enumerate}
\end{lemma}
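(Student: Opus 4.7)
The plan is to treat both parts uniformly via Pontryagin duality. A tuple $\phi = (\chi_1, \ldots, \chi_n) \in \hat\Gamma^n$ yields a homomorphism $\Phi := \phi \circ e \colon \Gamma \to \T^n$ sending $\gamma$ to $(\chi_1(\gamma), \ldots, \chi_n(\gamma))$. In case~(i) one wants $\Phi(\Gamma)$ dense in $\T^n$; in case~(ii), the remark just above the lemma that all characters of $\Gamma$ take values in $H_\Gamma$ forces $\Phi(\Gamma) \sub H_\Gamma^n$ automatically, so one wants equality. Setting $N = |H_\Gamma|$ in case~(ii) and, formally, $N = \infty$ with $\Z(N) := \Z$ in case~(i), Pontryagin duality rephrases the target condition as: for every nonzero $(a_1, \ldots, a_n) \in \Z(N)^n$, the product character $\prod_i \chi_i^{a_i} \in \hat\Gamma$ is nontrivial.

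For each such tuple, $\phi \mapsto \prod_i \chi_i^{a_i}$ is continuous $\hat\Gamma^n \to \hat\Gamma$, so $\set{\phi : \prod_i \chi_i^{a_i} \ne 1}$ is open; intersecting over the countably (resp.\ finitely) many nonzero tuples gives the desired $G_\delta$ set.

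The substantive content is density. By the Baire category theorem applied to the compact metrizable group $\hat\Gamma^n$, it suffices to show that for each admissible nonzero $(a_i)$, the set $\set{\phi : \prod_i \chi_i^{a_i} = 1}$ has empty interior. Suppose it contained a basic nonempty open box $U_1 \times \cdots \times U_n$; picking $a_1 \neq 0$ and fixing $\chi_i^0 \in U_i$ for $i \ge 2$, the map $\chi_1 \mapsto \chi_1^{a_1}$ would be constant on $U_1$, so $U_1 U_1^{-1}$ would lie in the closed subgroup $K := \set{\xi \in \hat\Gamma : \xi^{a_1} = 1}$. Hence $K$ would contain a neighborhood of the identity and, being a subgroup, be open in $\hat\Gamma$. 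By compactness $K$ has finite index, and Pontryagin duality identifies $\hat\Gamma / K$ with $\widehat{\Gamma / \Gamma[a_1]}$, where $\Gamma[a] := \set{\gamma \in \Gamma : \gamma^a = 1}$. So the problem reduces to showing that $\Gamma/\Gamma[a]$ is infinite for every admissible $a$.

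This purely group-theoretic statement is where the hypotheses on $\Gamma$ enter, and is the main (if mild) obstacle. In case~(i), if $\Gamma/\Gamma[a]$ were finite of order $k$ then every $\gamma \in \Gamma$ would satisfy $\gamma^{ka} = 1$, contradicting unboundedness. In case~(ii), let $k_p$ be the largest integer with $m_\Gamma(p^{k_p}) > 0$; property~\eqref{eq:ast} forces $m_\Gamma(p^{k_p}) = \infty$, and a direct check gives $N = \prod_p p^{k_p}$. For any $a \not\equiv 0 \pmod N$ there is some prime $p$ with $v_p(a) < k_p$, and each of the infinitely many $\Z(p^{k_p})$ summands of $\Gamma$ contributes a summand of order $p^{k_p - v_p(a)} > 1$ to $\Gamma/\Gamma[a]$, making the quotient infinite.
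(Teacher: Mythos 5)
Your proof is correct, and it takes a genuinely different route from the paper's. The paper proves density by direct construction: for part~(i) it splits into two subcases — if $\Gamma$ has an element of infinite order it reduces, via Kronecker's theorem, to choosing rationally independent images of a fixed generator; if $\Gamma$ is torsion it exhibits an element of large order and builds a suitable character extension by hand — and for part~(ii) it uses the Pr\"ufer structure theorem to produce a cyclic summand disjoint from a given finite subgroup and extends characters. You instead invoke Pontryagin duality to rephrase density of $\phi(e(\Gamma))$ in $\T^n$ (respectively equality with $H_\Gamma^n$) as the nontriviality of the character $\prod_i \chi_i^{a_i}$ for every admissible nonzero tuple $a$, which makes the $G_\delta$ structure immediate and, after a short compactness argument, reduces density to the single algebraic fact that $\Gamma/\Gamma[a]$ is infinite. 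All the steps check out: the identification $K=(a_1\Gamma)^\perp$ and hence $\hat\Gamma/K\cong\widehat{\Gamma/\Gamma[a_1]}$ is correct, property~$(\ast)$ does give $m_\Gamma(p^{k_p})=\infty$ and $N=\prod_p p^{k_p}=\exp\Gamma$, and the finiteness of $\Gamma/\Gamma[a]$ would indeed contradict unboundedness in case~(i) and the infinitude of the relevant $\Z(p^{k_p})$-summands in case~(ii). The payoff of your approach is that it is uniform across the two parts, avoids the case distinction of the original, and isolates exactly where the hypotheses on $\Gamma$ are used; the paper's argument is more elementary, using nothing beyond extension of characters from subgroups.
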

\begin{proof}
In both cases it is clear that the set is $G_\delta$, so we only have to check that it is dense. We will use the fact that every character of a subgroup $\Delta \leq \Gamma$ extends to a character of $\Gamma$ (see \cite{Folland1995}*{Corollary~4.41}).

\eqref{i:dhG:ub} \textbf{Case 1.} $\Gamma$ contains an element of infinite order $\gamma_0$. Denote by $\Delta$ the cyclic subgroup generated by $\gamma_0$ and let $\theta \colon \hat \Gamma \to \hat \Delta$ be the dual of the inclusion map $\Delta \to \Gamma$. Let $U$ be an open set in $\hat \Gamma^n$. Find $\phi \in U$ such that $\theta(\phi_1), \ldots, \theta(\phi_n)$ are independent over $\Q$ (this is possible because $\theta$ is an open map and the set of rationally independent tuples is dense in $\hat \Delta^n$). Then by Kronecker's theorem, $\phi(e(\Delta))$ is dense in $\T^n$.

\textbf{Case 2.} $\Gamma$ is torsion. By Baire's theorem, it suffices to show that for any open $V \sub \T^n$, the open set of $\phi \in \hat \Gamma^n$ such that there exists $\gamma \in \Gamma$ with $\phi(e(\gamma)) \in V$ is dense.

We let $d$ denote the usual (shortest geodesic) metric on the torus. A basic open set $U$ in $\hat \Gamma^n$ has the form
\[
U = \set{\phi \in \hat \Gamma^n : \forall f \in F \ \forall i \leq n \ \phi_i(f) = \theta_i(f)},
\]
where $F$ is a finite subgroup of $\Gamma$ and $\theta_i \in \hat F$. Let also $V$ be given by
\[
V = \set{x \in \T^n : \max_i d(x_i,b_i) < \eps}
\]
for some $b \in \T^n$ and $\eps > 0$. Let $\gamma_0$ be an element of $\Gamma$ of order greater than $|F|/\eps$. Let $k$ be the least positive integer such that $\gamma_0^k \in F$ and note that $k > 1/\eps$. Let $\Delta$ be the subgroup of $\Gamma$ generated by $F$ and $\gamma_0$. For $i \leq n$, choose $c_i \in \T$ such that $d(c_i,b_i) < \eps$ and $c_i^k = \theta_i(\gamma_0^k)$ (this is possible because as $k > 1/\eps$, the preimage of any point in $\T$ under the map $x \mapsto x^k$ is $\eps$-dense) and define $\psi_i \in \hat \Delta$ by
\[
\psi_i(\gamma_0^jf) = c_i^j\theta_i(f), \quad \text{ for } j \in \Z, f \in F.
\]
($\psi_i$ is well-defined because of the choice of $c_i$.) Now extend each $\psi_i$ to an element $\phi_i \in \hat \Gamma$. Then by definition, $\phi \in U$ and $\phi(e(\gamma_0)) \in V$.

\eqref{ii:dhG:*} Let $p_1, \ldots,p_k$ denote the prime numbers dividing the orders of elements of $\Gamma$, and $n_1,\ldots,n_k$ the integers such that
$m_{\Gamma}(p_i^{n_i})=\infty$ and $m_{\Gamma}(p_i^n)=0$ for all $n>n_i$.
Pick a non-empty open  $U \sub \hat \Gamma^n$ and $x=(x_1,\ldots,x_n) \in H_{\Gamma}^n$; we need to show that there exists $\phi \in U$ such that $x \in \phi(\Gamma)$.

There exists $\psi=(\psi_1,\ldots,\psi_n) \in \hat \Gamma^n$ and a finite subgroup $\Delta \le \Gamma$ such that any element of $\hat \Gamma^n$ coinciding with $\psi$ on $\Delta$  belongs to $U$. From Pr\"ufer's structure theorem and property \eqref{eq:ast}, we see that there exists $\gamma \in \Gamma$ of order $p_1^{n_1}\cdots p_k^{n_k}$ and such that $\langle \gamma \rangle \cap \Delta= \set{1}$. The order of $x$ divides $p_1^{n_1}\cdots p_k^{n_k}$, so we may extend each $\psi_i$ to a character $\phi_i$ of $\langle \Delta, \gamma \rangle \cong \Delta \times \langle \gamma \rangle$ by setting $\phi_i(\gamma)=x_i$. Further extending $(\phi_1,\ldots,\phi_n)$ to an element of $\hat \Gamma^n$, we are done.
\end{proof}

\begin{defn} \label{df:gen-gen}
Let $n \leq \aleph_0$. A topological group $G$ is called \df{generically $n$-generated} if the set of $n$-tuples $(g_1, \ldots, g_n) \in G^n$ that generate a dense subgroup of $G$ is dense $G_\delta$ in $G^n$. $G$ is \df{generically monothetic} if it is generically $1$-generated.
\end{defn}
Note that in the definition above, the condition that the set be $G_\delta$ is automatic, so one only has to check that it is dense. Note also that if $G$ is generically $n$-generated, then it is generically $m$-generated for every $m \geq n$ and that every Polish group is generically $\aleph_0$-generated.

The idea of the proof of the next proposition comes from Kechris~\cite{Kechris2010}*{p.~26}, where by a similar argument he showed that $U(\mcH)$ is generically $2$-generated.
\begin{prop} \label{p:genericL0}
Let $\Gamma$ be a countable abelian group.
\begin{enumerate} \romanenum
\item \label{i:gen:ub} If $\Gamma$ is unbounded, then
\[
\set{\pi \in \Hom(\Gamma, L^0(\T)) : \pi(\Gamma) \text{ is dense in } L^0(\T)}
\]
is dense $G_\delta$ in $\Hom(\Gamma, L^0(\T))$. In particular, $L^0(\T)$ is generically monothetic.

\item \label{ii:gen:*} If $\Gamma$ is bounded and has property \eqref{eq:ast}, then
\[
\set{\pi \in \Hom(\Gamma, L^0(H_\Gamma)) : \pi(\Gamma) \text{ is dense in } L^0(H_\Gamma)}
\]
is dense $G_\delta$ in $\Hom(\Gamma, L^0(H_\Gamma))$.

\end{enumerate}
\end{prop}
\begin{proof}
\eqref{i:gen:ub}
Let $K_n$ denote the subgroup of $L^0([0, 1], \lambda, \T)$ consisting of all functions that are constant on intervals of the type $[i2^{-n}, (i+1)2^{-n}]$. Then $K_n$ is isomorphic to $\T^{2^n}$, the sequence $\set{K_n}$ is increasing, and $\bigcup_n K_n$ is dense in $L^0(\T)$. It is easy to check that whenever $\Gamma$ is finitely generated, $\bigcup_n \Hom(\Gamma, K_n)$ is dense in $\Hom(\Gamma, L^0(\T))$ and by using again the result about extension of characters from subgroups, we obtain that this is true for all $\Gamma$.

Thanks to the Baire category theorem, we only need to show that for any non-empty open subset $U$ of $L^0(\T)$, the open set
$$\{ \phi \in \Hom(\Gamma,L^0(\T)) : \phi(\Gamma) \cap U  \ne \emptyset\} $$
is dense in $\Hom(\Gamma,L^0(\T))$. So, pick a non-empty open subset $V$ of $\Hom(\Gamma,L^0(\T))$ and let $n$ be big enough that $K_n \cap U \ne \emptyset$ and $\Hom(\Gamma, K_n) \cap V \ne \emptyset$. Using Lemma~\ref{l:densehatGamma} \eqref{i:dhG:ub}, we know that we can find some $\phi \in \Hom(\Gamma,K_n) \cap V$ such that $\phi(\Gamma) \cap U \ne \emptyset$, which concludes the proof.

\eqref{ii:gen:*} The same proof, with obvious modifications, works.
\end{proof}


\section{Extreme amenability is a $G_\delta$ property}\label{s:extamisgdelta}
The following theorem, while simple, is the key tool for all of our results regarding extreme amenability.
\begin{theorem} \label{th:Gdelta}
Let $G$ be a Polish group and $\Gamma$ be a countable group. Then the set
\[
\set{\pi \in \Hom(\Gamma, G) : \pi(\Gamma) \text{ is extremely amenable}}
\]
is $G_\delta$ in $\Hom(\Gamma, G)$.
\end{theorem}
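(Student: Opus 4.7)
The plan is to use Pestov's finite oscillation stability characterization of extreme amenability: a topological group $H$ is extremely amenable if and only if, for every bounded right-uniformly continuous function $f \colon H \to [0,1]$, every finite $F \subseteq H$, and every $\epsilon > 0$, there exists $h \in H$ such that $\max_{s \in F}|f(sh) - f(h)| < \epsilon$ (see Pestov's book \cite{Pestov2006}). Extreme amenability of $\pi(\Gamma)$ and of $\overline{\pi(\Gamma)}$ are equivalent, since bounded right-uniformly continuous functions extend uniquely across dense inclusions and Pestov's criterion then gives the same condition on both; so I work throughout with $H = \overline{\pi(\Gamma)}$.

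The main idea is to fix, independently of $\pi$, a single countable family $\mathcal{F}$ of test functions on $G$ that is rich enough to detect Pestov's criterion on every closed subgroup of $G$. Choose a compatible right-invariant metric $\rho$ on $G$ bounded by $1$ (available for every Polish group) and a countable dense subset $D \subseteq G$. Let $\mathcal{F}$ consist of all functions of the form $g \mapsto \min_{d \in A}(c_d + \rho(g, d))$, where $A$ ranges over finite subsets of $D$ and $c \colon A \to \Q \cap [0,1]$; each such function is $1$-Lipschitz in $\rho$, hence right-uniformly continuous on $G$, and the family is manifestly countable.

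The key claim is that $\overline{\pi(\Gamma)}$ is extremely amenable if and only if, for every $f \in \mathcal{F}$, every finite $F_0 \subseteq \Gamma$, and every rational $\epsilon > 0$, there exists $\gamma \in \Gamma$ with $\max_{\delta \in F_0}|f(\pi(\delta\gamma)) - f(\pi(\gamma))| < \epsilon$. The forward direction is straightforward: apply Pestov's criterion to the restriction of $f$ to $\overline{\pi(\Gamma)}$, and use density of $\pi(\Gamma)$ to replace both the test finite set and the witness by elements of $\pi(\Gamma)$. The reverse direction is the technical heart: a bounded right-uniformly continuous function on $\overline{\pi(\Gamma)}$ is uniformly continuous in $\rho$, so it is a uniform limit of bounded $n$-Lipschitz functions (inf-convolution); rescaling reduces to the $1$-Lipschitz $[0,1]$-valued case; such a function extends, by McShane, to a $1$-Lipschitz function on $G$; and that extension is itself uniformly approximated by members of $\mathcal{F}$ after choosing sufficiently fine finite $\epsilon$-nets in $D$ and rounding values to rationals. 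Tracking the $\epsilon$'s through these three successive approximations transfers the countable condition to Pestov's criterion for arbitrary bounded right-uniformly continuous functions on $\overline{\pi(\Gamma)}$.

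The conclusion is then immediate: for fixed $f \in \mathcal{F}$, finite $F_0 \subseteq \Gamma$, rational $\epsilon > 0$, and $\gamma \in \Gamma$, the condition ``$\max_{\delta \in F_0}|f(\pi(\delta\gamma)) - f(\pi(\gamma))| < \epsilon$'' is open in $\pi \in \Hom(\Gamma, G)$, because every evaluation $\pi \mapsto \pi(\eta)$ is continuous and $f$ is continuous on $G$; the union over $\gamma \in \Gamma$ remains open, and the countable intersection over the triples $(f, F_0, \epsilon)$ is $G_\delta$. The main obstacle I expect is the composite approximation argument: certifying that a single countable family $\mathcal{F}$ on $G$ really does suffice, uniformly in $\pi$, to detect Pestov's condition on the a priori varying closed subgroups $\overline{\pi(\Gamma)}$.
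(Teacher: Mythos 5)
Your strategy is genuinely different from the paper's and its final step rests on a claim that is false. You try to reduce the uncountable quantifier over bounded right-uniformly continuous test functions to a fixed countable family $\mathcal{F}$ of functions on $G$, and then observe that the resulting condition on $\pi$ is a countable intersection of open sets. The paper instead invokes the finite-coloring form of finite oscillation stability (item (10) of \cite{Pestov2006}*{Theorem~1.1.18}): extreme amenability of $\pi(\Gamma)$ is rephrased as ``for every rational $\epsilon>0$ and finite $A\subseteq\Gamma$ there is a finite $B\subseteq\Gamma$ such that every $2$-coloring $c$ of $B$ has a color class $\epsilon$-capturing some left translate $\pi(\gamma)\pi(A)$.'' Because $B$ is finite, only finitely many colorings $c$ ever need to be quantified over, so the countability issue simply does not arise; the quantifier string bottoms out in the open condition $d(\pi(\gamma a),\pi(\delta))<\epsilon$ and the whole thing is visibly $G_\delta$.

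The gap in your argument is exactly where you flagged it, and it is fatal as written. To run the reverse implication of your key claim, you must approximate a $1$-Lipschitz $[0,1]$-valued function $h$ on $\overline{\pi(\Gamma)}$ by some $g\in\mathcal{F}$ \emph{uniformly on all of $\pi(\Gamma)$}, not merely on a prescribed finite set: the witness $\gamma$ coming out of the countable condition is not known in advance, and as $\gamma$ ranges over $\Gamma$ the points $\pi(\gamma)$, $\pi(\delta\gamma)$ sweep out all of $\pi(\Gamma)$. A finite-min function $g\colon x\mapsto\min_{d\in A}(c_d+\rho(x,d))$ with $A$ finite approximates $h$ only on the part of $G$ that is $\rho$-close to $A$; away from $A$ the finite minimum overshoots $h$ (for instance, with $c_d=0$ for all $d\in A$ the approximant is roughly $1$ far from $A$, while $h$ may vanish there). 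Unless $\overline{\pi(\Gamma)}$ is totally bounded there is no finite $\epsilon$-net at all, so the ``sufficiently fine finite $\epsilon$-nets in $D$'' you invoke do not exist. The failure is structural, not a matter of picking a better $\mathcal{F}$: the space of $1$-Lipschitz $[0,1]$-valued functions on a separable metric space that is not totally bounded is non-separable in the sup norm (already on $\Z$ with the $\{0,1\}$-valued metric every $[0,1]$-valued function is $1$-Lipschitz, and there are continuum many at pairwise sup distance $1$), so no countable family can be a uniform $\epsilon$-net for all of them. To make a countable reduction you must abandon uniform function approximation altogether and use a characterization of extreme amenability in which the test objects are inherently finite, which is precisely what the paper's use of finite $2$-colorings of finite subsets of $\Gamma$ achieves.
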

\begin{proof}
We begin by fixing a left-invariant metric $d$ inducing the topology of $G$. Recall from \cite{Pestov2006}*{Theorem~2.1.11} that $\pi(\Gamma)$ is extremely amenable if and only if the left-translation action of $\pi(\Gamma)$ on $(\pi(\Gamma),d)$ is finitely oscillation stable. From \cite{Pestov2006}*{Theorem~1.1.18}, we know that this is equivalent to the following condition:
\begin{multline} \label{extam}
 \forall \eps >0 \ \forall F \text{ finite} \subseteq \pi(\Gamma) \ \exists K \text{ finite} \subseteq \pi(\Gamma) \ \forall c \colon K \to \{0,1\}  \\
\exists i \in \{0,1\} \ \exists g \in \pi(\Gamma) \ \forall f \in F \ \exists k \in c^{-1}(i) \ d(g f,k) < \eps.
\end{multline}
In fact, the above property is implied by item (10) of \cite{Pestov2006}*{Theorem~1.1.18} and implies item (8) of that same theorem, which lists equivalent conditions for an action to be finitely oscillation stable.

We claim that \eqref{extam} is equivalent to the following property:
\begin{multline} \label{extam2}
\forall \eps > 0 \ \forall A \text{ finite} \subseteq \Gamma \ \exists B \text{ finite} \subseteq \Gamma \
 \forall c \colon B \to \{0,1\} \\
\exists i \in \{0,1\} \ \exists \gamma \in \Gamma \ \forall a \in A \ \exists \delta \in c^{-1}(i) \ d(\pi(\gamma a) ,\pi(\delta)) < \eps.
\end{multline}
Indeed, since any coloring of $\pi(\Gamma)$ defines a coloring of $\Gamma$, it is clear that \eqref{extam2} is stronger than \eqref{extam}. To see the converse, assume that condition \eqref{extam} is satisfied and fix $\eps>0$ and a finite subset $A$ of $\Gamma$. Then \eqref{extam} applied to $\eps$ and $F= \pi(A)$
yields a finite subset $K$ of $\pi(\Gamma)$ and we can pick a finite $B \subseteq \Gamma$ such that $\pi(B)=K$. We claim that this $B$ witnesses that condition \eqref{extam2} is satisfied. To see this, fix an enumeration $\{\gamma_n\}_{n< \omega}$ of $\Gamma$. Given $c \colon B \to \{0,1\}$, we define $c_K \colon K \to \{0,1\}$ by setting
$$c_K(k) = c(\gamma_i) \text{ where } i= \min\{j \colon \gamma_j \in B \text{ and } \pi(\gamma_j)=k\ \}.$$
To conclude, it is enough to notice that the definition of $c_K$ ensures that for any $i \in \{0,1\}$ and any $k\in c_K^{-1}(i)$, there is some $\delta \in B$ with $\pi(\delta)=k$ and $c(\delta)=i$.

It is easily checked that \eqref{extam2} is a $G_\delta$ condition on $\pi$, and we are done.
\end{proof}

Below, we will work with $\cl{\pi(\Gamma)}$ rather than $\pi(\Gamma)$. When it comes to extreme amenability the difference is immaterial since $\pi(\Gamma)$ is extremely amenable if and only if $\cl{\pi(\Gamma)}$ is extremely amenable.

\begin{remark*}
Let $\Gamma$ be a countable group. When endowed with the pointwise convergence topology, the space $D(\Gamma)$ of bi-invariant distances on $\Gamma$ is a Polish space, and
the proof above also shows that the set
\[
\set{d \in D(\Gamma) : (\Gamma, d) \text{ is extremely amenable}}
\]
is $G_\delta$ in $D(\Gamma)$. Indeed, any group endowed with a bi-invariant distance is automatically a topological group, and the reasoning above leads to a criterion for extreme amenability which is a $G_{\delta}$ condition on $d$.
\end{remark*}

\section{The unitary group}
In this section, we study generic unitary representations of abelian groups $\Gamma$, that is, generic elements of $\Hom(\Gamma, U(\mcH))$, where $\mcH$ is a separable, infinite-dimensional Hilbert space.
\begin{lemma} \label{l:densefdim}
Let $\Gamma$ be an abelian group. Then the set
\[
\set{\pi \in \Hom(\Gamma, U(\mcH)) : \pi \text{ is a direct sum of finite-dimensional representations}}
\]
is dense in $\Hom(\Gamma, U(\mcH))$.
\end{lemma}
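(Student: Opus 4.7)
The plan is to invoke the spectral theorem of Stone--Naimark--Ambrose--Godement (SNAG) to exhibit, in every basic open neighborhood of $\pi$, a representation that splits as a direct sum of characters and is therefore, a fortiori, a direct sum of finite-dimensional subrepresentations.

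First I would fix $\pi \in \Hom(\Gamma, U(\mcH))$ along with a basic open neighborhood determined by finitely many $\gamma_1, \ldots, \gamma_n \in \Gamma$, vectors $\xi_1, \ldots, \xi_m \in \mcH$, and $\eps > 0$. Since $\Gamma$ is abelian, SNAG yields a projection-valued measure $E$ on the compact Pontryagin dual $\hat\Gamma$ with
\[
\pi(\gamma) = \int_{\hat\Gamma} \chi(\gamma) \, dE(\chi) \quad \text{for all } \gamma \in \Gamma.
\]
Using compactness of $\hat\Gamma$ together with continuity of each evaluation map $\chi \mapsto \chi(\gamma_i)$, I would then partition $\hat\Gamma$ into finitely many Borel pieces $E_1, \ldots, E_N$ such that the oscillation of $\chi \mapsto \chi(\gamma_i)$ on each $E_\alpha$ is less than $\delta := \eps/(1 + \max_j \|\xi_j\|)$, for every $i$. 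Picking $\chi_\alpha \in E_\alpha$ and setting $P_\alpha := E(E_\alpha)$, I would define
\[
\pi'(\gamma) := \sum_{\alpha=1}^N \chi_\alpha(\gamma)\, P_\alpha, \quad \gamma \in \Gamma.
\]
The $P_\alpha$ are pairwise orthogonal projections summing to the identity and the $\chi_\alpha$ are characters, so $\pi'$ is a homomorphism into $U(\mcH)$; by construction it acts as multiplication by $\chi_\alpha(\gamma)$ on $P_\alpha\mcH$. Decomposing each $P_\alpha\mcH$ into a Hilbert sum of one-dimensional subspaces (which are automatically $\pi'$-invariant) exhibits $\pi'$ as a direct sum of characters.

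Finally, verifying closeness is a routine Pythagoras computation: since
\[
(\pi(\gamma_i) - \chi_\alpha(\gamma_i))P_\alpha = \int_{E_\alpha} (\chi(\gamma_i) - \chi_\alpha(\gamma_i))\, dE(\chi)
\]
has operator norm at most $\delta$ by choice of the partition, and the spaces $P_\alpha \mcH$ are mutually orthogonal,
\[
\|\pi(\gamma_i)\xi_j - \pi'(\gamma_i)\xi_j\|^2 = \sum_\alpha \|(\pi(\gamma_i) - \chi_\alpha(\gamma_i))P_\alpha \xi_j\|^2 \leq \delta^2 \|\xi_j\|^2 < \eps^2
\]
for every $i, j$. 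There is no real obstacle in this argument; the only step that merits any care is the appeal to SNAG and the observation that an infinite-dimensional $P_\alpha\mcH$ still breaks up into one-dimensional $\pi'$-invariant lines, so the whole of $\mcH$ decomposes as a direct sum of one-dimensional $\pi'$-invariant subspaces.
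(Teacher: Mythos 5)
Your proof is correct, but it takes a different route from the paper. The paper's argument is a one-line citation: it invokes \cite{Kechris2010}*{Proposition~H.2} together with \cite{Bekka2008}*{Proposition~F.2.7} (which give the density of representations that decompose as direct sums of irreducibles) and then observes that irreducible representations of an abelian group are one-dimensional, hence finite-dimensional. You instead give a self-contained spectral-theoretic argument: SNAG produces the projection-valued measure $E$ on $\hat\Gamma$, you chop $\hat\Gamma$ into finitely many pieces on which the relevant evaluation maps $\chi \mapsto \chi(\gamma_i)$ have small oscillation, and you replace $\pi$ by the step-function representation $\pi' = \sum_\alpha \chi_\alpha(\cdot) P_\alpha$, which visibly splits as a direct sum of characters. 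The Pythagoras estimate you carry out is exactly right: the operators $(\pi(\gamma_i) - \chi_\alpha(\gamma_i))P_\alpha = \int_{E_\alpha}(\chi(\gamma_i)-\chi_\alpha(\gamma_i))\,dE(\chi)$ have norm at most $\delta$ and have ranges inside the mutually orthogonal subspaces $P_\alpha\mcH$. What your approach buys is transparency and self-containedness---the approximation mechanism is completely explicit and one sees directly \emph{which} direct sum of characters is close to $\pi$---at the cost of invoking the full spectral theorem for abelian groups. The paper's approach is shorter but opaque, pushing the real work into two cited propositions; it also reveals the structural reason (irreducibles of abelian groups are one-dimensional) that is obscured by, though implicit in, the spectral calculus.
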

\begin{proof}
This is a direct consequence of \cite{Kechris2010}*{Proposition~H.2}, \cite{Bekka2008}*{Proposition~F.2.7}, and the fact that irreducible representations of abelian groups are one-dimensional.
\end{proof}

Recall that a vector $\xi \in \mcH$ is \df{cyclic} for the representation $\pi$ if the span of $\set{\pi(\gamma)\xi : \gamma \in \Gamma}$ is dense in $\mcH$.
\begin{lemma} \label{l:densecyclic}
Let $\Gamma$ be an infinite abelian group and $\xi$ be a unit vector in $\mcH$. Then the set
\[
\set{\pi \in \Hom(\Gamma, U(\mcH)) : \xi \text{ is a cyclic vector for } \pi}
\]
is dense $G_\delta$ in $\Hom(\Gamma, U(\mcH))$.
\end{lemma}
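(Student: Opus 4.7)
The $G_\delta$ part is routine: fixing a countable dense sequence $\{\eta_m\}$ in $\mcH$, cyclicity of $\xi$ for $\pi$ is the statement that for every $m$ and every $k \geq 1$ there is a finite rational linear combination of vectors $\pi(\gamma)\xi$ approximating $\eta_m$ to within $1/k$, which is a countable intersection of countable unions of open conditions on $\pi$.

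For density, the key tool is the following spectral criterion: if $\sigma \in \Hom(\Gamma,U(\mcH))$ is diagonalized by an orthonormal basis $\{f_n\}$ with \emph{pairwise distinct} characters $\chi_n' \in \hat\Gamma$ (meaning $\sigma(\gamma)f_n = \chi_n'(\gamma)f_n$), then a vector $v \in \mcH$ is cyclic for $\sigma$ if and only if $\langle v, f_n\rangle \neq 0$ for every $n$. Indeed, pairwise distinctness of the $\chi_n'$ forces every bounded operator commuting with the $\sigma(\gamma)$ to be diagonal in the basis $\{f_n\}$, so the closed $\sigma$-invariant subspaces are exactly the $\bigoplus_{n \in A}\mathbb{C}f_n$ ($A \sub \mathbb{N}$), and cyclicity of $v$ becomes the nonvanishing of every coordinate of $v$.

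By Lemma~\ref{l:densefdim}, we may assume $\pi = \bigoplus_n \chi_n$ acting on $\mcH = \bigoplus_n \mathbb{C}e_n$, with $\{e_n\}$ a fixed orthonormal basis and the $\chi_n$ possibly repeated. Given a basic neighborhood $V$ of $\pi$ specified by a finite $F \sub \Gamma$, test vectors $v_1,\dots,v_J$, and $\eps > 0$, the plan is to produce $\pi' \in V$ in the form $\pi' = U \pi_0' U^{-1}$, where $\pi_0'$ is diagonal in $\{e_n\}$ with new characters $\chi_n'$ and $U$ is a small unitary perturbation of the identity. Pick $N$ so that $\sum_{n > N}|\langle v_j, e_n\rangle|^2$ is small for every $j$. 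Since $\Gamma$ is infinite, Pontryagin duality makes $\hat\Gamma$ an infinite compact group, hence without isolated points, so every nonempty open subset is uncountable; this enables the simultaneous choice of pairwise distinct characters $\chi_n' \in \hat\Gamma$, with $\chi_n'$ arbitrarily close to $\chi_n$ on $F$ for $n \leq N$ and arbitrary (but distinct) for $n > N$. For $U$, pick $\eta$ close to $\xi$ with $\langle \eta, e_n\rangle \neq 0$ for all $n$ (take $\eta = \xi + \delta \sum_n 2^{-n} e_n$ for a generic small $\delta$), and let $U$ be the identity on $\{\xi,\eta\}^\perp$ and rotate $\eta$ onto $\xi$ in their span; then $\|U - I\|$ is as small as needed. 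Setting $f_n = Ue_n$, the identity $\langle \xi, f_n\rangle = \langle U^{-1}\xi, e_n\rangle = \langle \eta, e_n\rangle \neq 0$ combined with the spectral criterion gives $\xi$ cyclic for $\pi'$, while the estimate
\[
\|\pi'(\gamma)v_j - \pi(\gamma)v_j\| \leq 2\|U - I\|\,\|v_j\| + \|\pi_0'(\gamma)v_j - \pi(\gamma)v_j\|,
\]
with the second term split between the head $n \leq N$ (characters close on $F$) and the tail $n > N$ (vectors small), yields $\pi' \in V$.

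The main obstacle is to reconcile the infinitely many pairwise-distinctness constraints on the $\chi_n'$ (needed for the spectral criterion to reduce cyclicity to nonvanishing of components) with the finite closeness constraints on $F$ (needed to keep $\pi'$ in $V$), while simultaneously perturbing the eigenbasis so that $\xi$ has full support in it; the absence of isolated points in $\hat\Gamma$, coming from $\Gamma$ being infinite, is what makes these demands jointly satisfiable.
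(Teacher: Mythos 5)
Your proof is correct, but it takes a genuinely different route from the paper's. The paper first proves that the set of pairs $(\pi,\xi)$ for which $\xi$ is cyclic is comeager in $\Hom(\Gamma, U(\mcH)) \times \mcH$: it perturbs $\pi$ to have a finite-dimensional invariant subspace $\mcK$ with distinct characters, perturbs $\xi$ to lie in $\mcK$ with full support there, and invokes Artin's theorem on linear independence of characters to extract $\gamma_1,\dots,\gamma_n$ with $\det(\phi_i(\gamma_j)) \neq 0$, so that $\{\pi(\gamma_j)\xi\}$ spans $\mcK$. It then passes to the statement about a fixed $\xi$ via the Kuratowski--Ulam theorem and the homogeneity of $U(\mcH)\actson \mcH$. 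You instead fix $\xi$ from the start, which forces you to perturb the \emph{eigenbasis} of the diagonalized $\pi$ by a small unitary $U$ so that $\xi$ gains full support; in exchange, you get to use a clean commutant argument (distinct characters $\Rightarrow$ commutant is diagonal $\Rightarrow$ invariant subspaces are coordinate subspaces) in place of Artin's theorem, and you avoid Kuratowski--Ulam and the homogeneity step entirely. Both approaches use Lemma~\ref{l:densefdim} and the perfectness of $\hat\Gamma$ (to obtain pairwise distinct characters close to the given ones) as the essential inputs; yours is more directly constructive at the cost of the basis-perturbation device, while the paper's joint-genericity-plus-slicing argument sidesteps that perturbation but relies on the additional category machinery. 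Both are sound.
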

\begin{proof}
We will first show that the set
\[
\set{(\pi, \xi) \in \Hom(\Gamma, U(\mcH)) \times \mcH : \xi \text{ is a cyclic vector for } \pi}
\]
is comeager. By Baire's theorem, we only need to check that for a given non-empty open set $A \sub \mcH$, the open set
\[
\set{(\pi, \xi) : \exists \gamma_1, \ldots, \gamma_n \in \Gamma \ \exists c_1, \ldots, c_n \in \C \ \sum_i c_i \pi(\gamma)\xi \in A}
\]
is dense in $\Hom(\Gamma, U(\mcH)) \times \mcH$. Fix $A$, an open set $V \sub \Hom(\Gamma, U(\mcH))$, and an open set $B \sub \mcH$. By Lemma~\ref{l:densefdim}, there is $\pi \in V$, a $\pi$-invariant finite-dimensional subspace $\mcK \sub \mcH$ and vectors $\xi \in B \cap \mcK$ and $\eta \in A \cap \mcK$. As every finite-dimensional representation of $\Gamma$ is a direct sum of characters, there exists an orthonormal basis $\set{e_i}_{i=1}^n$ of $\mcK$ and $\phi \in \hat \Gamma^n$ such that $\pi(\gamma)e_i = \ip{\gamma, \phi_i}e_i$. Let $\xi = \sum_i a_i e_i$. By slightly modifying $\xi$ if necessary, we can also assume that $a_i \neq 0$ for all $i$.
As $\Gamma$ is infinite, $\hat \Gamma$ is perfect, so without leaving $V$, we can assume that the characters $\phi_1, \ldots, \phi_n$ are distinct. By a classical theorem of Artin (see for instance \cite{Lang:Algebra}*{Theorem~4.1}), they are then linearly independent, so there exist $\gamma_1, \ldots, \gamma_n \in \Gamma$ such that $\det (\ip{\gamma_j, \phi_i})_{i, j = 1}^n \neq 0$. We thus have
\[
\det \left(a_i\ip{\gamma_j, \phi_i}\right)_{i,j=1}^n = \Big(\prod_i a_i \Big) (\det \ip{\gamma_j, \phi_i}) \neq 0,
\]
so the vectors $\pi(\gamma_j)\xi = \sum_i a_i \ip{\gamma_j, \phi_i} e_i$ are linearly independent. Every tuple of $n$ independent vectors spans $\mcK$; in particular, $\eta \in \Span \set{\pi(\gamma_j)\xi : j \leq n}$.

Now by the Kuratowski--Ulam theorem \cite{Kechris1995}*{8.41}, there exists $\xi \in \mcH$ for which the set
\[
\set{\pi \in \Hom(\Gamma, U(\mcH)) : \xi \text{ is a cyclic vector for } \pi}
\]
is comeager. By homogeneity, this is in fact true for every $\xi \neq 0$. Finally, to see that the set is $G_\delta$, it suffices to write down the definition.
\end{proof}

If $\pi \colon \Gamma \to U(\mcH)$ is a representation of the countable abelian group $\Gamma$ and $\xi \in \mcH$ is a unit vector, let $\phi_{\pi, \xi}$ be the positive-definite function corresponding to $\xi$, i.e.,
\[
\phi_{\pi, \xi}(\gamma) = \ip{\pi(\gamma)\xi, \xi}.
\]
By Bochner's theorem \cite{Folland1995}*{Theorem~4.18}, there exists a unique Borel probability measure $\mu_{\pi, \xi}$ on $\hat\Gamma$ such that
\begin{equation} \label{eq:defmu}
\int \ip{\gamma, x} \ud \mu_{\pi, \xi}(x) = \phi_{\pi, \xi}(\gamma) \quad \text{ for all } \gamma \in \Gamma.
\end{equation}
Also, the map $(\pi,\xi) \mapsto \mu_{\pi,\xi}$ is continuous.

Then the cyclic subrepresentation of $\pi$ generated by $\xi$ is isomorphic to the representation of $\Gamma$ on $L^2(\hat \Gamma, \mu_{\pi, \xi})$ given by
\begin{equation} \label{eq:isomrep}
(\gamma \cdot f)(x) = \ip{\gamma, x} f(x)
\end{equation}
with cyclic vector the constant $1$. This representation naturally extends to a representation of the $C^*$-algebra $C(\hat \Gamma)$ which we will again denote by $\pi$.

\begin{lemma} \label{l:densenonat}
Let $\Gamma$ be an infinite abelian group and $\xi$ a unit vector in $\mcH$. Then the set
\[
\set{\pi \in \Hom(\Gamma, U(\mcH)) : \xi \text{ is a cyclic vector for } \pi \text{ and }\mu_{\pi, \xi} \text{ is non-atomic}}
\]
is dense $G_\delta$ in $\Hom(\Gamma, U(\mcH))$.
\end{lemma}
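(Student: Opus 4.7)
The plan is to write the set as the intersection of two dense $G_\delta$ conditions: cyclicity of $\xi$ for $\pi$, given by Lemma~\ref{l:densecyclic}, and non-atomicity of $\mu_{\pi,\xi}$; the conclusion will then follow from the Baire category theorem. So the only work is to show that $\set{\pi \in \Hom(\Gamma, U(\mcH)) : \mu_{\pi,\xi} \text{ is non-atomic}}$ is dense $G_\delta$.

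For the $G_\delta$ part I would use that $\pi \mapsto \mu_{\pi,\xi}$ is continuous into the space of Borel probability measures on $\hat\Gamma$ with the weak-$*$ topology. For each $\eps > 0$, the set of measures having some atom of mass $\geq \eps$ is closed: a weak-$*$ limit of measures concentrating mass $\geq \eps$ at convergent points $x_n \to x$ still assigns mass $\geq \eps$ to $x$, as one sees by testing against continuous bump functions around $x$. Non-atomicity is thus the intersection over $n$ of the complements with $\eps = 1/n$.

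For density, fix a non-empty open $V \sub \Hom(\Gamma, U(\mcH))$. By Lemma~\ref{l:densefdim} I pick $\pi_0 \in V$ of the form $\pi_0 = \bigoplus_i \chi_i$ in some orthonormal basis $\{e_i\}$ of $\mcH$, with $\chi_i \in \hat\Gamma$; after approximating the test vectors of a basic SOT-neighborhood by their truncations in this basis, I may assume $V$ contains a set of the form $\set{\pi : \|\pi(\gamma) e_j - \chi_j(\gamma) e_j\| < \eps, \ \gamma \in F, \ j \leq N}$ for some finite $F \sub \Gamma$, $N \in \N$, $\eps > 0$. I then fix any non-atomic Borel probability measure $\nu$ on $\hat\Gamma$ (for instance Haar measure, non-atomic because $\hat\Gamma$ is infinite compact), choose pairwise disjoint open neighborhoods $U_j$ of $\chi_j$ small enough that $|\chi(\gamma) - \chi_j(\gamma)| < \eps$ on $U_j$ for every $\gamma \in F$, with $\nu(U_j) > 0$, set $f_j := 1_{U_j}/\sqrt{\nu(U_j)}$ for $j \leq N$, extend to an orthonormal basis $\{f_i\}_{i \in \N}$ of $L^2(\hat\Gamma, \nu)$, and define the unitary $T \colon \mcH \to L^2(\hat\Gamma, \nu)$ by $T e_i = f_i$. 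The candidate representation is $\pi := T^{-1} \rho\, T$, where $\rho$ is the multiplication representation on $L^2(\hat\Gamma, \nu)$.

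Verification is then routine: for $j \leq N$ and $\gamma \in F$, $\|\pi(\gamma) e_j - \chi_j(\gamma) e_j\|$ equals the $L^2(\nu)$-norm of $(\chi(\gamma) - \chi_j(\gamma)) f_j$, bounded by $\sup_{\chi \in U_j} |\chi(\gamma) - \chi_j(\gamma)| < \eps$, so $\pi \in V$; and a short computation via Bochner's theorem gives $\mu_{\pi,\xi} = |T\xi|^2 \nu$, absolutely continuous with respect to $\nu$ and hence non-atomic. The main technical hurdle is the joint constraint on $T$: it must send the finitely many prescribed basis vectors to approximate eigenvectors of $\rho$ (so that $\pi$ stays close to $\pi_0$) while still extending to a unitary onto $L^2(\hat\Gamma, \nu)$, whose associated multiplication representation has automatically non-atomic spectrum. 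The key observation resolving this tension is that, by continuity of the character map $\chi \mapsto \chi(\gamma)$, normalized indicators of small neighborhoods of a character $\chi_j$ are automatically approximate eigenvectors of $\rho$ with approximate eigenvalue $\chi_j(\gamma)$.
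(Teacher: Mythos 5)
Your proof is correct, but it takes a genuinely different route from the paper's. The paper reduces the problem to a known genericity result: it observes that $\mu_{\pi,\xi}$ is non-atomic precisely when the cyclic subrepresentation generated by $\xi$ is weakly mixing (contains no finite-dimensional subrepresentations); on the dense $G_\delta$ set of Lemma~\ref{l:densecyclic} where $\xi$ is cyclic, this is the same as $\pi$ itself being weakly mixing, which is known to be a dense $G_\delta$ condition in $\Hom(\Gamma, U(\mcH))$ by Proposition~H.11 and Theorem~H.12 of \cite{Kechris2010}. You instead prove both halves from scratch: for the $G_\delta$ part you use continuity of $\pi \mapsto \mu_{\pi,\xi}$ together with the fact that the set of probability measures having an atom of mass at least $1/n$ is weak-$*$ closed; for density you give a direct spectral construction, conjugating the multiplication representation $\rho$ on $L^2(\hat\Gamma,\nu)$ (for a fixed non-atomic $\nu$) by a unitary $T$ sending finitely many prescribed basis vectors to normalized indicators of small neighborhoods $U_j$ of the characters $\chi_j$. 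Your key observation that such indicators are approximate eigenvectors of $\rho$, so that $\pi = T^{-1}\rho T$ stays SOT-close to $\pi_0$, is correct, and $\mu_{\pi,\xi} = |T\xi|^2\,\nu$ is automatically non-atomic. Your approach is more self-contained, buying independence from the cited Kechris results at the cost of a longer argument; both ultimately rest on the same Baire-category intersection with Lemma~\ref{l:densecyclic}.

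One small gap to patch: in choosing ``pairwise disjoint open neighborhoods $U_j$ of $\chi_j$'' you are tacitly assuming that $\chi_1, \ldots, \chi_N$ are distinct, which need not hold for an arbitrary diagonalization of $\pi_0$ (the characters may repeat). Before choosing the $U_j$, perturb the $\chi_j$ slightly so that they become pairwise distinct without leaving $V$; this is possible because $\hat\Gamma$ is infinite, compact, metrizable, hence perfect. This is precisely the move made inside the paper's proof of Lemma~\ref{l:densecyclic}, so it is a routine fix, after which the rest of your verification goes through.
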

\begin{proof}
Note that $\mu_{\pi, \xi}$ is non-atomic iff the cyclic subrepresentation of $\pi$ generated by $\xi$ is \df{weakly mixing}, i.e., does not contain finite-dimensional subrepresentations. Apply Lemma~\ref{l:densecyclic}, then use Proposition~H.11 and Theorem~H.12 from \cite{Kechris2010}.
\end{proof}

We are now ready to prove the main theorem of this section.
\begin{theorem} \label{th:UH}
Let $\Gamma$ be a countable abelian group.
\begin{enumerate} \romanenum
\item \label{i:t:uni}If $\Gamma$ is unbounded, then the set
\[
\set{\pi \in \Hom(\Gamma, U(\mcH)) : \cl{\pi(\Gamma)} \cong L^0(\T)}
\]
is comeager in $\Hom(\Gamma, U(\mcH))$.
\item \label{i:t:uni2}If $\Gamma$ is bounded and has property \eqref{eq:ast}, then the set
\[
\set{\pi \in \Hom(\Gamma, U(\mcH)) : \cl{\pi(\Gamma)} \cong L^0(H_\Gamma)}
\]
is comeager in $\Hom(\Gamma, U(\mcH))$.
\end{enumerate}
\end{theorem}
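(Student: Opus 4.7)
The plan is to reduce, via the spectral theorem, to a density statement about homomorphisms $\Gamma \to L^0(\T)$ and then appeal to Proposition \ref{p:genericL0}.

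Fix a unit vector $\xi_0 \in \mcH$. By Lemmas \ref{l:densefdim}, \ref{l:densecyclic}, and \ref{l:densenonat}, the set $\mcA \subseteq \Hom(\Gamma, U(\mcH))$ of those $\pi$ for which $\xi_0$ is cyclic and $\mu_{\pi, \xi_0}$ is non-atomic is comeager. For $\pi \in \mcA$, the model \eqref{eq:isomrep} together with a Borel measure isomorphism $(\hat\Gamma, \mu_{\pi,\xi_0}) \cong ([0,1], \lambda)$ (available since $\mu_{\pi, \xi_0}$ is non-atomic) yields a homomorphism $\tilde\pi \colon \Gamma \to L^0([0,1], \T)$ such that, under the closed embedding $\iota \colon L^0(\T) \hookrightarrow U(\mcH)$ by multiplication on $L^2([0,1]) \cong \mcH$, $\pi$ is unitarily conjugate to $\iota \circ \tilde\pi$. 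Consequently $\cl{\pi(\Gamma)} \cong \cl{\tilde\pi(\Gamma)}$ as topological groups, with the closure on the right taken inside $L^0(\T)$.

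It then remains to show that, for the generic $\pi \in \mcA$, the closure $\cl{\tilde\pi(\Gamma)}$ is all of $L^0(\T)$. Proposition \ref{p:genericL0}(i) gives the parallel statement inside $\Hom(\Gamma, L^0(\T))$: the set $\mcD$ of $\sigma$ with $\cl{\sigma(\Gamma)} = L^0(\T)$ is comeager. To transfer this to $\Hom(\Gamma, U(\mcH))$ one considers the continuous, $U(\mcH)$-equivariant parametrization
\[
\Psi \colon U(\mcH) \times \Hom(\Gamma, L^0(\T)) \to \Hom(\Gamma, U(\mcH)), \quad \Psi(V, \sigma) = V (\iota \circ \sigma) V^{-1},
\]
whose image covers $\mcA$ (by spectral theory) and which carries $U(\mcH) \times \mcD$ into the target set of the statement. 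Part (ii) is handled identically, with Proposition \ref{p:genericL0}(ii) and $L^0(H_\Gamma)$ in place of their (i) counterparts.

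The main obstacle is this last transfer step: showing that $\Psi(U(\mcH) \times \mcD)$ is comeager in $\mcA$ despite $\Psi$ being highly non-injective (for a given $\pi \in \mcA$, the fiber is parametrized by the normalizer of $\iota(L^0(\T))$ in $U(\mcH)$, which is $L^0(\T) \rtimes \Aut([0,1],\lambda)$). This is a Kuratowski--Ulam-style category-preservation issue. A clean alternative, avoiding a direct study of $\Psi$, is to establish comeagerness by ``$G_\delta$ + dense'': the target set admits a countable characterization as ``$\cl{\pi(\Gamma)}$ equals the unitary group of the abelian von Neumann algebra $\pi(\Gamma)''$'' via functional calculus on $C(\hat\Gamma)$ (hence is $G_\delta$), and density follows by combining Lemma \ref{l:densefdim} with perturbation into the image of $\iota \circ (\cdot)$ applied to elements of $\mcD$, using the fact that a direct sum of characters with atomic spectral measure can be approximated by a multiplication representation on $L^2([0,1])$ coming from a $\sigma \in \mcD$.
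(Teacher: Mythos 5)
Your ``clean alternative'' is, in broad outline, the route the paper takes: restrict to the comeager set $\mcA$ where $\xi_0$ is cyclic and $\mu_{\pi,\xi_0}$ is non-atomic, realize $\cl{\pi(\Gamma)}$ inside $L^0(\hat\Gamma,\mu_{\pi,\xi_0},\T)$ acting by multiplication, characterize ``$\pi(\Gamma)$ dense in $L^0$'' by a countable family of approximation conditions involving the functional calculus $\pi(f)$ for $f$ in a countable dense subset of $C(\hat\Gamma,\T)$ (this is exactly the paper's set $C_1$, which is $G_\delta$), and then prove density. Your first approach via $\Psi$ has, as you yourself note, an open gap; it is not what the paper does, and you never close it, so it contributes nothing.

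The real issue is that the density step --- the crux of the whole argument --- is asserted rather than proved. Your sentence ``a direct sum of characters with atomic spectral measure can be approximated by a multiplication representation on $L^2([0,1])$ coming from a $\sigma\in\mcD$'' is precisely the non-trivial claim, and it is not justified. The paper's mechanism is concrete: given an open set $V\subseteq\Hom(\Gamma,U(\mcH))$, Lemma~\ref{l:densefdim} produces $\pi\in V$ with a finite-dimensional invariant subspace $\mcK$ absorbing the test vectors up to $\eps/4$; one diagonalizes $\pi|_\mcK$ with characters $\phi_1,\dots,\phi_n\in\hat\Gamma$; then Lemma~\ref{l:densehatGamma}(i) allows one to perturb $\phi$, without leaving $V$, so that $\phi(\Gamma)$ is dense in $\T^n$; finally one picks $\gamma$ with $|\ip{\gamma,\phi_j}-f(\phi_j)|<\eps/2$ for all $j$ and verifies $\|\pi(f)\eta_i-\pi(\gamma)\eta_i\|<\eps$. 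None of this is visible in your sketch --- in particular you never invoke Lemma~\ref{l:densehatGamma}, which is the key lemma, nor do you explain how to perturb inside $V$ (your appeal to Proposition~\ref{p:genericL0} lives in $\Hom(\Gamma,L^0(\T))$, and transporting it to $\Hom(\Gamma,U(\mcH))$ is exactly the transfer problem you flagged in the $\Psi$ route). A secondary imprecision: the condition ``$\cl{\pi(\Gamma)}=U(\pi(\Gamma)'')$'' is sufficient for the target property within $\mcA$, but it is not obviously equivalent to it (a proper closed subgroup of $L^0(\T)$ can be abstractly isomorphic to $L^0(\T)$), so you should not present it as a characterization of the target set; it is enough, and is what the paper does, to exhibit a dense $G_\delta$ set contained in the target.
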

\begin{proof}
Again, we only give the proof for \eqref{i:t:uni}, the proof of \eqref{i:t:uni2} being similar.

Let $D$ be a countable dense subset of $C(\hat \Gamma, \T)$ (the space of all continuous functions $\hat \Gamma \to \T$ with the uniform convergence topology) and $E$ a countable dense subset of the unit sphere of $\mcH$. Define the set $C_1 \sub \Hom(\Gamma, U(\mcH))$ by
\[ \begin{split}
\pi \in C_1 \iff \forall \eps > 0 & \ \forall f \in D \ \forall \eta_1, \ldots, \eta_k \in E \\
    &\exists \gamma \in \Gamma \ \forall i \leq k \
        \nm{\pi(f)\eta_i - \pi(\gamma)\eta_i} < \eps.
\end{split} \]
We will show that $C_1$ is dense $G_\delta$. By Baire's theorem, it suffices to show that for fixed $\eps > 0$, $f \in D$, and $\eta_1, \ldots, \eta_k \in E$, the open set of $\pi$ satisfying the condition on the the second line of the definition of $C_1$ is dense. Fix an open set $V \sub \Hom(\Gamma, U(\mcH))$. By Lemma~\ref{l:densefdim}, there exist $\pi \in V$, a finite-dimensional $\pi$-invariant subspace $\mcK \sub \mcH$ and $\eta_1', \ldots, \eta_k' \in \mcK$ such that for all $i \leq k$, $\nm{\eta_i - \eta_i'} < \eps/4$ and $\nm{\eta_i'} \leq 1$. As before, there exists an orthonormal basis $e_1, \ldots, e_n$ of $\mcK$ and $\phi \in \hat \Gamma^n$ such that $\pi(\gamma)(e_j) = \ip{\gamma, \phi_j}e_j$ for all $\gamma \in \Gamma$ and $j \leq n$. By Lemma~\ref{l:densehatGamma}, we can, without leaving $V$, further assume that $\phi(\Gamma)$ is dense in $\T^n$. Find now $\gamma \in \Gamma$ such that $|\ip{\gamma, \phi_j} - f(\phi_j)| < \eps/2$ for all $j \leq n$. Then for every $\eta = \sum_j a_j e_j \in \mcK$,
\[ \begin{split}
\nm{\pi(f)\eta - \pi(\gamma)\eta}^2 &= \Big\| \sum_j a_j (f(\phi_j) - \ip{\gamma, \phi_j})e_j\Big\|^2 \\
    &\leq \sum_j |a_j|^2 (\eps/2)^2 = (\eps/2)^2 \nm{\eta}^2,
\end{split}\]
so using the choice of $\mcK$ and the fact that both $\pi(f)$ and $\pi(\gamma)$ are unitary, we obtain that $\nm{\pi(f)\eta_i - \pi(\gamma)\eta_i} < \eps$ for all $i \leq k$ as required.

Fix a unit vector $\xi \in \mcH$ and define the set $C_2 \sub \Hom(\Gamma, U(\mcH))$ by
\[
\pi \in C_2 \iff \pi \in C_1 \text{ and } \xi \text{ is cyclic for } \pi \text{ and } \mu_{\pi, \xi} \text{ is non-atomic}.
\]
By Lemma~\ref{l:densecyclic}, Lemma~\ref{l:densenonat}, and the argument above, $C_2$ is dense $G_\delta$. We now show that for every $\pi \in C_2$, $\cl{\pi(\Gamma)} \cong L^0(\T)$. Fix $\pi \in C_2$ and write $\mu$ for $\mu_{\pi, \xi}$. As $\xi$ is cyclic, $\pi$ is isomorphic to the representation on $L^2(\hat \Gamma, \mu)$ given by \eqref{eq:isomrep}, so we can identify $\mcH$ with $L^2(\hat \Gamma, \mu)$. Note that $L^0(\hat \Gamma, \mu, \T)$ embeds as a closed subgroup of $U(L^2(\hat \Gamma, \mu))$ (acting by pointwise multiplication) and that we can view $\pi(\Gamma)$ as a subgroup of $L^0(\hat \Gamma, \mu, \T)$. As $\mu$ is non-atomic, it now suffices to check that $\pi(\Gamma)$ is dense in $L^0(\hat \Gamma, \mu, \T)$. We first observe that the natural homomorphism $\theta_2 \colon C(\hat \Gamma, \T) \to L^0(\hat \Gamma, \mu, \T)$ has a dense image. Indeed, consider the commutative diagram
\[ \xymatrix{
C(\hat \Gamma, \R) \ar[r]^{\theta_1} \ar[d]^{e_1} & L^1(\hat \Gamma, \mu, \R) \ar[d]^{e_2} \\
C(\hat \Gamma, \T) \ar[r]^{\theta_2} & L^0(\hat \Gamma, \mu, \T),
} \]
where the horizontal arrows denote the quotient maps by $\mu$-a.e. equivalence and the vertical ones are the exponential maps $f \mapsto e^{if}$. As the image of $\theta_1$ is dense and $e_2$ is surjective, we obtain that the image of $\theta_2$ is also dense. Now consider an open subset of $L^0(\hat \Gamma, \mu, \T)$ of the form
\[
V = \set{g \in L^0(\hat \Gamma, \mu, \T) : \forall i \leq k \ \nm{f \cdot \eta_i - g \cdot \eta_i} < \eps}
\]
for some $f \in D$, $\eta_1, \ldots, \eta_k \in E$ and $\eps > 0$. As $L^0(\hat \Gamma, \mu, \T)$ is a closed subgroup of $U(L^2(\hat \Gamma, \mu))$ and $\theta_2(D)$ is dense in $L^0(\hat \Gamma, \mu, \T)$, sets of this form form a basis for the topology of $L^0$. As $\pi \in C_1$, each such open set meets $\pi(\Gamma)$, finishing the proof.
\end{proof}

\section{$\Aut(\mu)$}
Now, we turn to the study of generic measure-preserving actions of countable abelian groups $\Gamma$ on a standard probability space, i.e., generic elements of $\Hom(\Gamma,\Aut(\mu))$. Specifically, we prove the following theorem.

\begin{theorem}\label{t:generic-Aut(mu)}
Let $\Gamma$ be a countable abelian group which is either unbounded or bounded with property \eqref{eq:ast}. Then
\[
\set{\pi \in \Hom(\Gamma, \Aut(\mu)) : \cl{\pi(\Gamma)} \text{ is extremely amenable}}
\]
is dense $G_\delta$ in $\Hom(\Gamma, \Aut(\mu))$.
\end{theorem}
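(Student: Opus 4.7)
The $G_\delta$ part is immediate from Theorem~\ref{th:Gdelta}, so only density remains to be shown. I will establish the stronger statement that
\[
S := \{\pi \in \Hom(\Gamma, \Aut(\mu)) : \cl{\pi(\Gamma)} \cong L^0(\T)\}
\]
(with $L^0(\T)$ replaced by $L^0(H_\Gamma)$ in the bounded case) is dense in $\Hom(\Gamma, \Aut(\mu))$; since $L^0(\T)$ and its quotient $L^0(H_\Gamma)$ are extremely amenable by Glasner's theorem cited in the introduction, this is enough. Note that $S$ is invariant under conjugation by $\Aut(\mu)$.

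Fix a closed embedding $\iota\colon L^0(\T) \hookrightarrow \Aut(\mu)$ by identifying $(X,\mu)$ with $([0,1]\times\T, \lambda\otimes m)$ and setting $\iota(f)(x,z) = (x, f(x) z)$. By Proposition~\ref{p:genericL0}, the generic $\pi_1 \in \Hom(\Gamma, L^0(\T))$ has dense image in $L^0(\T)$, so $\iota\circ\pi_1 \in S$; thus the set $S$ is already comeager in the closed subspace $\iota_*\Hom(\Gamma, L^0(\T))$. The plan is a three-step approximation: given a non-empty open $V \subseteq \Hom(\Gamma, \Aut(\mu))$ and $\pi_0 \in V$, (1) approximate $\pi_0$ by a \emph{free} action $\pi \in V$ factoring through a finite abelian quotient $\Gamma \twoheadrightarrow F$ via a Rokhlin-type lemma; (2) use the classical structure theorem for free finite-group actions on standard probability spaces to find $g \in \Aut(\mu)$ with $g\pi g^{-1} = \iota \circ \sigma$, where $\sigma \in \Hom(\Gamma, L^0(\T))$ is the composition of $\Gamma \twoheadrightarrow F$ with an embedding $F \hookrightarrow \T \hookrightarrow L^0(\T)$ (into constant functions); (3) since $\iota\circ\sigma$ lies in the open set $gVg^{-1}$, Proposition~\ref{p:genericL0} applied in a small enough neighborhood of $\sigma$ in $\Hom(\Gamma, L^0(\T))$ produces $\sigma'$ with dense image, for which $\iota\circ\sigma' \in gVg^{-1} \cap S$; conjugating back, $g^{-1}(\iota\circ\sigma')g \in V \cap S$.

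The main obstacle is Step~(1): producing an honest homomorphism $\Gamma \to \Aut(\mu)$ (not merely a partial action on a prescribed finite subset) that both approximates $\pi_0$ and factors through a finite abelian quotient of $\Gamma$. For $\Gamma = \Z$ this is the classical Halmos--Rokhlin lemma; for general countable amenable groups it is the Ornstein--Weiss quasi-tiling Rokhlin lemma. The hypothesis that $\Gamma$ is unbounded (or bounded with property $(\ast)$) is exactly what ensures the existence of finite abelian quotients of $\Gamma$ large enough to support such free approximations. Step~(2) is the standard orbit-equivalence description of free finite-group actions on $(X,\mu)$ as translation on $(X/F)\times F$, and Step~(3) is a direct invocation of Proposition~\ref{p:genericL0}.
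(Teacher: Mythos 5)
Your overall strategy---reduce to the image of an embedding of $L^0(\T)$ into $\Aut(\mu)$ and then invoke Proposition~\ref{p:genericL0}---points in the right direction, but the paper follows a significantly shorter route and two of your three steps have genuine gaps.

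The paper observes that for abelian $\Gamma$, the conjugacy class of \emph{any} free action is dense in $\Hom(\Gamma,\Aut(\mu))$ (a fact cited to Foreman--Weiss); since $\cl{\pi(\Gamma)}\cong L^0(\T)$ is a conjugation-invariant property, it suffices to exhibit a \emph{single} free $\pi$ with this property. This is done by composing a dense homomorphism $\Gamma\to L^0(\T)$ (Proposition~\ref{p:genericL0}) with the embedding $L^0(\T)\hookrightarrow U(\mcH)\hookrightarrow\Aut(\mu)$, where the second arrow is the \emph{Gaussian construction}. No Rokhlin lemma is needed. Your plan instead attempts a direct Rokhlin-type approximation, which is where the trouble lies.

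Step~(1) is the gap you yourself flag, and it is not resolved by what you cite: the Ornstein--Weiss quasi-tiling lemma produces approximate tilings, not actions that genuinely factor through a finite quotient. Even for $\Z^d$, Conze's Rokhlin lemma (Lemma~\ref{l:primeorder-measure} in the paper) gives $\psi(e_i)^{n_i}=1$ without asserting that $\psi$ is a free action of $\prod\Z(n_i)$. Your heuristic about the role of the unboundedness/\eqref{eq:ast} hypothesis is also off-target: the hypothesis is there to make Proposition~\ref{p:genericL0} work (i.e., so that $\Gamma$ can map densely into $L^0(\T)$ or $L^0(H_\Gamma)$), not to guarantee large free finite quotient actions.

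Step~(2) has a second, independent problem. Your embedding $\iota(f)(x,z)=(x,f(x)z)$ does \emph{not} send nontrivial elements of $L^0(\T)$ to essentially free transformations: if $f\in L^0(\T)$ equals $1$ on a set of positive measure, then $\iota(f)$ fixes a positive-measure set. Consequently $\iota\circ\sigma$ is a free $F$-action only when every nontrivial element of $\sigma(F)$ is (a.e.) nowhere equal to $1$, which forces $F$ to be cyclic and embedded as constants. For non-cyclic $F$ (e.g.\ $\Z(2)\times\Z(2)$, or any quotient of a bounded group like $(\Z(2))^{\oplus\infty}$) there is no such embedding into $\T^k$ making all nontrivial elements pointwise nontrivial, so the claimed conjugation $g\pi g^{-1}=\iota\circ\sigma$ of a free finite action to your canonical form cannot exist. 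The Gaussian construction is precisely the device that circumvents this: it produces a free $L^0(\T)$-action on a probability space even though the underlying unitary representation of $L^0(\T)$ by multiplication operators has fixed vectors. Replacing $\iota$ by the Gaussian embedding, and replacing your Steps~(1)--(2) by the density of the conjugacy class of a free action, collapses your argument into the paper's.
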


We recall that a homomorphism $\pi \in \Hom(\Gamma, \Aut(X, \mu))$ can be viewed as a measure-preserving action of $\Gamma$ on $X$. Such an action is called \df{(essentially) free} if for all $\gamma \in \Gamma \sminus \set{1}$, $\mu(\set{x \in X : \pi(\gamma) \cdot x = x}) = 0$. It is well-known that for abelian $\Gamma$, the conjugacy class of any free action of $\Gamma$ is dense (see, e.g., \cite{Foreman2004}*{Claim~19}). Since the isomorphism type of $\cl{\pi(\Gamma)}$ is invariant under conjugacy, it suffices, in view of Theorem~\ref{th:Gdelta}, to prove that there exists a single free action $\pi$ such that $\cl{\pi(\Gamma)}$ is extremely amenable. For this, we will use the Gaussian construction, which we quickly recall. As a reference for the facts stated below without proof, we refer the reader to \cite{Kechris2010}*{Appendix E} and \cite{Kechris2011}*{Lecture~4}.

Let $\nu$ denote the standard Gaussian measure on $\R$ and let $(X, \mu) = (\R^\N, \nu^\N)$. Let $p_i \colon \R^\N \to \R$, $i \in \N$ be the projection functions. Then $p_i$ are independent Gaussian random variables, $p_i \in L^2(X, \mu,\R)$, and $\ip{p_i, p_j} = \delta_{ij}$ for all $i, j$. Let $H$ be the closed subspace of $L^2(X, \mu,\R)$ spanned by the $p_i$s (known as the \df{first chaos}). For every orthogonal operator $T \in O(H)$, we can define a measure-preserving transformation $\tilde T \colon X \to X$ by
\[
\tilde T(x)(n) = (Tp_n)(x).
\]
Then $T \mapsto \tilde T$ is a topological group isomorphism between the orthogonal group $O(H)$ and a closed subgroup of $\Aut(\mu)$. We next check that each $\tilde T \neq \id$ obtained in this manner acts freely. Indeed, $T p_n - p_n$ is a Gaussian random variable for every $n$, therefore
\[
\mu(\set{x : (Tp_n)(x) = p_n(x)}) > 0 \implies Tp_n = p_n,
\]
whence $\mu(\set{\tilde T = \id }) > 0$ implies that $T = \id$.

The construction described above applies to a real Hilbert space, yet, to use our results from the previous sections, we need to embed the unitary group of a complex separable Hilbert space $\mcH$ into $\Aut(\mu)$. This is achieved by first taking the realification $\mcH_{\R}$ of $\mcH$, then embedding the unitary group of $\mcH$ into the orthogonal group of $\mcH_{\R}$ in the natural way, and then applying the Gaussian construction to $\mcH_{\R}$.

\begin{proof}[Proof of Theorem~\ref{t:generic-Aut(mu)}]
Let $\phi \colon L^0(\T) \to \Aut(\mu)$ denote the composition of the embeddings
\[
L^0(\T) \to U(\mcH) \to \Aut(\mu),
\]
where the first embedding is obtained by identifying $\mcH$ with $L^2([0, 1], \lambda)$ and letting $L^0(\T) = L^0([0, 1], \lambda, \T)$ act on it by pointwise multiplication, and the second, by the Gaussian construction outlined above. Let $\pi \colon \Gamma \to L^0(\T)$ be a homomorphism with a dense image in the unbounded case and a homomorphism whose image is dense in the copy of $L^0(H_\Gamma)$ sitting naturally inside $L^0(\T)$ in the bounded one (such a homomorphism exists by Proposition~\ref{p:genericL0}). Then $\phi \circ \pi$ is a free action of $\Gamma$ such that $\cl{(\phi \circ \pi)(\Gamma)}$ is extremely amenable, which, by the remarks above, is sufficient to prove the theorem.
\end{proof}

\begin{remark*}
We are grateful to the referee for pointing out that the actions produced via the Gaussian construction are free, thus simplifying our original proof of Theorem~\ref{t:generic-Aut(mu)}. The embedding $\phi \colon L^0(\T) \to \Aut(\mu)$ that we describe above had already been considered by Glasner and Weiss~\cite{Glasner2005a}*{p. 1537}.
\end{remark*}

\section{The Urysohn space}
We next consider generic representations of countable abelian groups by isometries of the Urysohn space, i.e., generic elements of $\Hom(\Gamma,\Iso(\bU))$.
We refer the reader to the volume \cite{TopAppl:Urysohn2008} for information about the Urysohn space $\bU$ and its bounded variant, the Urysohn sphere $\bU_1$; Chapter 5 of \cite{Pestov2006} is also a good source of information.
Let us note that all the results proved here are also true for $\Iso(\bU_1)$, and the proofs are essentially the same as those given below. To simplify the exposition, we focus on $\Iso(\bU)$.

The next lemma is a metric variant of the induction construction of Mackey and will be used in many of our arguments about representations in the isometry group of the Urysohn space.
\begin{lemma}\label{l:metric-ind}
Let $K$ be a Polish group admitting a compatible bi-invariant metric and $\Gamma \leq K$ be a discrete subgroup. Let $(Z, d_Z)$ be a bounded Polish metric space on which $\Gamma$ acts faithfully by isometries. Then there exists a bounded Polish metric space $(Y, d_Y)$, a continuous, faithful, isometric action $K \actson Y$, and an isometric, $\Gamma$-equivariant embedding $e \colon Z \to Y$.
\end{lemma}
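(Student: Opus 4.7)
The plan is to carry out a metric version of Mackey's induced representation construction. I will set $Y = (K \times Z)/\Gamma$, where $\Gamma$ acts on $K \times Z$ diagonally by $\gamma \cdot (k, z) = (k\gamma^{-1}, \gamma z)$; the group $K$ will then act on $Y$ by left multiplication on the first coordinate, $k \cdot [k', z] = [kk', z]$, which commutes with the $\Gamma$-action used to form the quotient. The embedding will be $e(z) = [1, z]$, and $\Gamma$-equivariance of $e$ reduces to the identity $[1, \gamma z] = [\gamma, z]$ in the quotient, which holds because $\gamma^{-1} \cdot (1, \gamma z) = (\gamma, z)$.

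To metrize $Y$, I first replace $d_K$ by $\min(d_K, 1)$ (still compatible and bi-invariant), and put on $K \times Z$ the weighted metric $d((k_1, z_1), (k_2, z_2)) = M\, d_K(k_1, k_2) + d_Z(z_1, z_2)$ for a large constant $M$ to be chosen. Right-invariance of $d_K$ combined with the hypothesis that $\Gamma$ acts on $Z$ by isometries makes $d$ invariant under the diagonal $\Gamma$-action, and left-invariance makes it invariant under the left $K$-action on $K \times Z$. The quotient pseudo-metric $d_Y([x], [y]) = \inf_{\gamma \in \Gamma} d(x, \gamma \cdot y)$ inherits both invariances and, by the standard argument using that $\Gamma$ acts by isometries, satisfies the triangle inequality; the $K$-action on $Y$ is then automatically by isometries, and $Y$ is bounded of diameter at most $M + \mathrm{diam}(Z)$.

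What remains is to verify that $d_Y$ separates points, that $e$ is isometric, that the $K$-action is continuous and faithful, and that $Y$ is complete (separability is clear). Point-separation uses discreteness: fix $\epsilon > 0$ with $d_K(\gamma, 1) \geq \epsilon$ for all $\gamma \in \Gamma \setminus \{1\}$, and recall that a discrete subgroup of a Hausdorff topological group is closed. If $d((k_1, z_1), \gamma_n \cdot (k_2, z_2)) \to 0$, then $\gamma_n \to k_1^{-1} k_2$ in $K$, so closedness and discreteness force the $\gamma_n$ to be eventually constant, equal to some $\gamma \in \Gamma$ with $\gamma \cdot (k_2, z_2) = (k_1, z_1)$. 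For $e$ to be isometric it suffices to take $M \geq \mathrm{diam}(Z)/\epsilon$: any $\gamma \neq 1$ contributes at least $M\epsilon \geq \mathrm{diam}(Z)$ to $d_Y([1, z_1], [1, z_2])$, so the infimum is attained at $\gamma = 1$. Faithfulness of the $K$-action uses the faithfulness hypothesis on $Z$: if $k$ fixes every $[1, z]$, then for each $z$ some $\gamma_z \in \Gamma$ satisfies $k = \gamma_z^{-1}$ and $\gamma_z z = z$; the first equation forces $\gamma_z$ to be independent of $z$, and faithful action on $Z$ then gives $\gamma_z = 1$, so $k = 1$. Continuity of the $K$-action is standard from separate continuity plus the isometry property.

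The main technical obstacle I anticipate is completeness of $Y$. Given a Cauchy sequence $([x_n])$ in $Y$, I will pass to a subsequence with $d_Y([x_{n_k}], [x_{n_{k+1}}]) < 2^{-k}$ and then inductively replace each successive representative by a $\Gamma$-translate that nearly realizes the infimum in $d_Y$, producing a Cauchy sequence in the complete space $(K \times Z, d)$ whose limit's equivalence class is the required limit in $Y$.
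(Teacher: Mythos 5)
Your proposal is correct and follows essentially the same route as the paper: form the induced space $(K \times Z)/\Gamma$ with the $K$-action by left translation, scale the bi-invariant metric on $K$ so that the gap of the discrete subgroup dominates $\operatorname{diam} Z$, and check that $e(z) = [1,z]$ is an isometric $\Gamma$-equivariant embedding. The only differences are cosmetic (a weighted sum metric $M\,d_K + d_Z$ in place of the paper's $\max$ metric, and a left $\Gamma$-action in place of a right one), and your completeness sketch, which the paper leaves unstated, is the standard argument and is fine.
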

\begin{proof}
Fix a compatible bounded bi-invariant metric $\delta$ on $K$ (note that $\delta$ is necessarily complete, see, for instance, \cite{Gao2009a}*{2.2.2}) such that
\begin{equation} \label{eq:cond-delta}
\inf_{\gamma \in \Gamma \sminus \set{1}} \delta(1, \gamma) > \diam Z.
\end{equation}
Put $X = K \times Z$ and equip it with the metric
\[
d_X((k_1, z_1), (k_2, z_2)) = \max (\delta(k_1, k_2), d_Z(z_1, z_2)).
\]
The group $\Gamma$ acts on $X$ isometrically on the right by
\[
(k, z) \cdot \gamma = (k \gamma, \gamma^{-1} \cdot z).
\]
Set $Y= X / \Gamma$ and equip it with the metric
\[ d_Y(x_1\Gamma, x_2\Gamma) = \inf_{\gamma_1, \gamma_2} d_X(x_1 \cdot \gamma_1, x_2 \cdot \gamma_2) = \inf_\gamma d_X(x_1, x_2 \cdot \gamma).
\]
Note that $d_Y$ is indeed a metric because the $\Gamma$-action is by isometries and each $x\Gamma$ is closed. It is also not difficult to see that $d_Y$ is complete.

The group $K$ acts on $Y$ on the left by
\[
k \cdot (k', z)\Gamma = (kk', z)\Gamma.
\]
This action is by isometries, and is easily seen to be continuous. Define $e \colon Z \to Y$ by $e(z) = (1, z)\Gamma$. We check that $e$ is $\Gamma$-equivariant:
\[
e(\gamma \cdot z) = (1, \gamma \cdot z)\Gamma = (\gamma, z)\Gamma = \gamma \cdot (1, z)\Gamma = \gamma \cdot e(z).
\]
We finally check that $e$ is an isometric embedding. Using \eqref{eq:cond-delta} and the fact that $\Gamma$ acts on $Z$ by isometries, we have:
\[ \begin{split}
d_Y(e(z_1), e(z_2)) &= d_Y((1, z_1)\Gamma, (1, z_2)\Gamma) \\
  &= \inf_{\gamma_1, \gamma_2} \set{d_X((\gamma_1, \gamma_1^{-1} \cdot z_1), (\gamma_2, \gamma_2^{-1} \cdot z_2))} \\
  &= \inf_{\gamma_1, \gamma_2} \set{\max(\delta(\gamma_1, \gamma_2), d_Z(\gamma_1^{-1} \cdot z_1, \gamma_2^{-1} \cdot z_2))} \\
  &= d_Z(z_1, z_2).
\end{split} \]
To see that the action is faithful, note that $k \cdot (1,z) \Gamma = (1,z)\Gamma$ implies that $k \in \Gamma$ and $k \cdot z = z$. Since the action $\Gamma \actson Z$ is assumed to be faithful, we are done.
\end{proof}
We recall for future reference that compact Polish groups, as well as abelian Polish groups, admit a compatible bi-invariant metric (see, e.g., \cite{Hewitt1979}*{8.3 and 8.9}).

For our construction below, we need to introduce some notation: whenever $(Y,d_Y)$ is a bounded Polish metric space, we consider the space $L^1(Y)$ of all (equivalence classes of) measurable maps from a standard probability space $(X,\mu)$ to $Y$, endowed with the metric
$$d(f,g)= \int_X d_Y(f(x),g(x)) \ud\mu(x).$$
Then $L^1(Y)$ is a Polish metric space in which $Y$ embeds isometrically (as constant functions). Also, if $G$ is a Polish group acting continuously on $Y$ by isometries, the action $G \actson Y$ extends to an isometric action $L^0(G) \actson L^1(Y)$ defined by
$$(g \cdot f)(x)=g(x)\cdot f(x). $$
This action is continuous, faithful if the original action $G \actson Y$ is faithful, and the image of $L^0(G)$ in the isometry group of $L^1(Y)$ is closed if $G$ is a closed subgroup of $\Iso(Y)$.

\begin{lemma} \label{l:L0denseU}
\begin{enumerate} \romanenum
\item \label{i:l:L0d}Let $\Gamma$ be an unbounded abelian group. Then the set
\[
\set{\pi \in \Hom(\Gamma, \Iso(\bU)) : \cl{\pi(\Gamma)} \cong L^0(\T)}
\]
is dense in $\Hom(\Gamma, \Iso(\bU))$.
\item \label{ii:l:L0d} Let $\Gamma$ be a bounded abelian group with property \eqref{eq:ast}. Then the set
\[
\set{\pi \in \Hom(\Gamma, \Iso(\bU)) : \cl{\pi(\Gamma)} \cong L^0(H_{\Gamma})}
\]
is dense in $\Hom(\Gamma, \Iso(\bU))$.
\end{enumerate}
\end{lemma}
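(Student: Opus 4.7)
I will prove (i); part (ii) follows the same pattern with $H_\Gamma$ in place of $\T$ and using part (ii) of Proposition~\ref{p:genericL0}. Let $V \sub \Hom(\Gamma, \Iso(\bU))$ be a non-empty basic open set, determined by some $\pi_1 \in V$, $\gamma_1, \ldots, \gamma_n \in \Gamma$, a finite set $F \sub \bU$, and $\eps > 0$. The plan is to construct $\pi \in V$ of the form $\pi = j \circ \rho$, where $\rho \colon \Gamma \to L^0(\T)$ is a dense-image homomorphism (Proposition~\ref{p:genericL0}(i)) and $j \colon L^0(\T) \hookrightarrow \Iso(\bU)$ is a topological group embedding onto a closed subgroup; then $\cl{\pi(\Gamma)} = j(L^0(\T)) \cong L^0(\T)$ automatically, and the content of the argument is to build $j$ compatibly with the approximation data defining $V$.

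First, I approximate $\pi_1$ by a representation factoring through a finite action. Using the Fra\"iss\'e-theoretic structure of the class of finite metric spaces---specifically, a form of the extension property for partial isometries that accommodates commuting systems---one finds a finite metric space $F_0 \supseteq F$ and commuting isometries $\sigma_1, \ldots, \sigma_n$ of $F_0$ with $d_{F_0}(\sigma_i(x), \pi_1(\gamma_i)(x)) < \eps/2$ for $x \in F$ and $i \leq n$, where $F_0$ is identified (after composing with an ambient isometry of $\bU$, using ultrahomogeneity) with a finite subset of $\bU$ extending $F$. These generate a finite abelian subgroup $\Gamma_0 \leq \Iso(F_0)$, and since $\Gamma$ is unbounded abelian, there is a surjection $q \colon \Gamma \twoheadrightarrow \Gamma_0$ with $q(\gamma_i) = \sigma_i$.

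Next, embed $\Gamma_0$ as a finite subgroup of a torus $K := \T^N$ for some $N$. The group $K$ carries a bi-invariant metric, so Lemma~\ref{l:metric-ind} applied to $\Gamma_0 \leq K$ acting on $F_0$ yields a bounded Polish space $Y \supseteq F_0$ with a continuous faithful isometric $K$-action extending the $\Gamma_0$-action on $F_0$. The $L^1$-construction from the remark preceding the lemma then produces a continuous faithful isometric action $L^0(K) \actson L^1(Y)$, embedding $L^0(K)$ as a closed subgroup of $\Iso(L^1(Y))$. Since $L^1(Y)$ is a bounded Polish metric space, it embeds isometrically into $\bU$; again using the ultrahomogeneity of $\bU$, this embedding can be arranged to extend the inclusion $F_0 \sub \bU$. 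Combining, one obtains a closed subgroup embedding $j \colon L^0(K) \hookrightarrow \Iso(\bU)$ such that, for every $g \in \Gamma_0$, the constant function $c(g) \in L^0(K)$ acts on $F_0 \sub \bU$ exactly as $g$ does; in particular $j \circ c \circ q$ agrees with $\sigma_i \cdot$ on $F$ for each $\gamma_i$.

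Finally, identify $L^0(K) = L^0(\T^N)$ with $L^0(\T)$ via a Borel measure isomorphism $[0,1]^N \cong [0,1]$; then Proposition~\ref{p:genericL0}(i) implies that the set of $\rho \in \Hom(\Gamma, L^0(K))$ with dense image is dense $G_\delta$. For a small $\delta > 0$, pick such $\rho$ with $d_{L^0(K)}(\rho(\gamma_i), c(q(\gamma_i))) < \delta$ for all $i \leq n$. Continuity of the action $L^0(K) \actson L^1(Y) \hookrightarrow \bU$ ensures that for $\delta$ small enough, $d_\bU(j(\rho(\gamma_i))(x), \sigma_i(x)) < \eps/2$ for every $x \in F$, hence $\pi := j \circ \rho \in V$ and $\cl{\pi(\Gamma)} = j(L^0(K)) \cong L^0(\T)$, as required. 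The main obstacle is the first step (simultaneous extension of commuting partial isometries), which is the commuting variant of Solecki's EPPA for the class of finite metric spaces; while technical, it is standard in the Fra\"iss\'e-theoretic treatment of $\bU$, and the remaining steps are routine continuity arguments combined with the tools already developed in Section~2 and Lemma~\ref{l:metric-ind}.
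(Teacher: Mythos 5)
Your overall blueprint is the same as the paper's: approximate the given representation on the finitely many generators by something factoring through a finite abelian group, embed that finite group in a torus $K$, use Lemma~\ref{l:metric-ind} and the $L^1$-construction plus Kat\v{e}tov to realize $L^0(K)$ as a closed subgroup of $\Iso(\bU)$ compatibly with the finite data, and then perturb via Proposition~\ref{p:genericL0} to get dense image. The gap is in the first step, where you produce the finite approximating action. You invoke ``a form of the extension property for partial isometries that accommodates commuting systems'' and attribute it to ``the commuting variant of Solecki's EPPA.'' No such theorem is available: Solecki's EPPA extends partial isometries to total ones with no control over the group they generate, and the analogous question with a commutativity constraint (EPPA for $\mathbb{Z}^d$-like systems) is a well-known open problem, not a standard Fra\"iss\'e fact. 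The paper sidesteps this entirely by citing Pestov--Uspenskij \cite{Pestov2006a}*{Theorem~1}: since the subgroup $\Delta=\langle\gamma_1,\dots,\gamma_n\rangle$ is finitely generated abelian, hence residually finite, their theorem directly produces an \emph{action} of $\Delta$ on $\bU$ with finite range approximating $\pi_1|_\Delta$. That is a genuinely different input, and you cannot reach it from EPPA-type considerations.

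Moreover, even granting yourself commuting isometries $\sigma_1,\dots,\sigma_n$ of a finite set approximating $\pi_1(\gamma_1),\dots,\pi_1(\gamma_n)$, the next step --- ``there is a surjection $q\colon\Gamma\twoheadrightarrow\Gamma_0$ with $q(\gamma_i)=\sigma_i$'' --- does not follow from ``$\Gamma$ is unbounded abelian.'' For the assignment $\gamma_i\mapsto\sigma_i$ to extend to a homomorphism on $\Delta$ (let alone on $\Gamma$), the $\sigma_i$ must satisfy \emph{every} relation that the $\gamma_i$ satisfy in $\Delta$, not just pairwise commutativity: for example if $\gamma_1\gamma_2=\gamma_3^5$ in $\Delta$ you need $\sigma_1\sigma_2=\sigma_3^5$, and if some $\gamma_i$ has finite order you need $\sigma_i$ to have compatible order. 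An approximation that only controls the individual $\sigma_i$ pointwise and forces them to commute gives you none of this. What is really needed is that the approximating isometries arise as the image of a homomorphism from $\Delta$ into a finite group of isometries, which is precisely the content of Pestov--Uspenskij's finite-range approximation for residually finite groups. Once you replace your first step by that theorem (applied to $\Delta$), and then extend $\Delta\to\T^N$ to $\Gamma\to\T^N$ using divisibility of $\T$, the rest of your argument goes through and matches the paper's proof.
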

\begin{proof}
\eqref{i:l:L0d}
We denote $G=\Iso(\bU)$. Note that open sets in $\Hom(\Gamma, G)$ of the form
\[
\set{\pi : d(\pi(\gamma)a, \pi_0(\gamma)a) < \eps \text{ for all } \gamma \in F, a \in A},
\]
where $\pi_0 \in \Hom(\Gamma, G)$, $\eps > 0$, $F$ is a finite subset of $\Gamma$, and $A$ is a finite subset of $\bU$, form a basis for the topology of $\Hom(\Gamma, G)$. We will show that every such open set contains a $\pi$ such that $\cl{\pi(\Gamma)} \cong L^0(\T)$.

To that end, fix $\pi_0$, $\eps$, $F$, and $A$. Denote by $\Delta$ the subgroup of $\Gamma$ generated by $F$. As $\Delta$ is residually finite, we may apply \cite{Pestov2006a}*{Theorem~1} to find an action $\pi_1 \colon \Delta \actson \bU$ such that $d(\pi_1(\gamma)a,\pi_0(\gamma)a)< \eps$ for all $\gamma \in F$, $a \in A$ and such that $\pi_1$ has finite range in $\Iso(\bU)$. Let $B$ denote the (finite) closure of $A$ under $\pi_1$, and $\Delta'$ be the image of $\Delta$ in $\Iso(B)$. Find an integer $n$ such that $\Delta'$ embeds as a subgroup of $\T^n$. By Lemma~\ref{l:metric-ind}, there exists a bounded Polish metric space $Y$ and a faithful action of $\T^n$ on $Y$ which extends the action $\Delta' \actson B$. The natural homomorphism $\Delta \to \T^n$ extends to a homomorphism $\Gamma \to \T^n$ which we denote by $\phi$. The action $\T^n \actson Y$ gives rise, as explained above, to an action $L^0(\T^n) \actson L^1(Y)$ which extends the action $\T^n \actson Y$, and we may identify $L^0(\T^n)$ with its image in $\Iso(L^1(Y))$. By the Kat\v{e}tov construction (see, e.g., \cite{Pestov2006}*{pp. 109--111, 115}), the action $\Iso(L^1(Y)) \actson L^1(Y)$ extends to an action $\Iso(L^1(Y)) \actson \bU$, and
$\Iso(L^1 (Y))$ embeds as a closed subgroup of $G$. Thus we have identified $L^0(\T^n)$ with a closed subgroup of $G$.

The composition of the homomorphisms
\[
\Gamma \to \T^n \to L^0(\T^n) \to G
\]
defines an element $\pi$ of $\Hom(\Gamma, G)$ and we can assume, using the homogeneity of $\bU$, that for all $\gamma \in \Delta$, one has $\pi(\gamma)|_A = \pi_1(\gamma)|_A$, hence $\pi \in U$. Now by perturbing $\pi$ slightly, using Proposition~\ref{p:genericL0}, we can arrange so that $\pi$ is still in $U$ but $\pi(\Gamma)$ is dense in $L^0(\T^n)$. As $L^0(\T^n)$ is closed in $G$, we obtain that $\cl{\pi(\Gamma)} = L^0(\T^n) \cong L^0(\T)$, finishing the proof.

\eqref{ii:l:L0d} Essentially the same proof works (replacing everywhere $\T$ by $H_\Gamma$).
\end{proof}

The next theorem follows immediately from Lemma~\ref{l:L0denseU} and Theorem~\ref{th:Gdelta}.
\begin{theorem} \label{t:generic-iso(U)}
Let $\Gamma$ be a countable abelian group, and assume that $\Gamma$ is either unbounded or is bounded and has property \eqref{eq:ast}. Then
\[
\set{\pi \in \Hom(\Gamma, \Iso(\bU)) : \cl{\pi(\Gamma)} \text{ is extremely amenable}}
\]
is dense $G_\delta$ in $\Hom(\Gamma, \Iso(\bU))$.
\end{theorem}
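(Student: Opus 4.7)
The plan is an immediate combination of the two main preceding results. First, by Theorem~\ref{th:Gdelta}, together with the observation (made explicit just after its proof) that $\pi(\Gamma)$ is extremely amenable if and only if $\cl{\pi(\Gamma)}$ is, the set
\[
E = \set{\pi \in \Hom(\Gamma, \Iso(\bU)) : \cl{\pi(\Gamma)} \text{ is extremely amenable}}
\]
is $G_\delta$ in $\Hom(\Gamma, \Iso(\bU))$. So the only remaining issue is density.

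For density I would simply invoke Lemma~\ref{l:L0denseU}: in both cases (unbounded $\Gamma$, or bounded $\Gamma$ satisfying \eqref{eq:ast}) the lemma provides a dense set of $\pi \in \Hom(\Gamma,\Iso(\bU))$ such that $\cl{\pi(\Gamma)}$ is topologically isomorphic either to $L^0(\T)$ or to $L^0(H_\Gamma)$. Since $\T$ is compact and $H_\Gamma$ is finite (hence compact), and since $L^0(K)$ is extremely amenable for every compact metric group $K$ — this is the theorem of Glasner recalled in the introduction, which is in fact why $L^0(\T)$ was singled out as the natural target in Lemma~\ref{l:L0denseU} — each of the groups $L^0(\T)$ and $L^0(H_\Gamma)$ is extremely amenable. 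Therefore the dense set provided by Lemma~\ref{l:L0denseU} is contained in $E$, so $E$ is dense.

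Combining the two paragraphs, $E$ is a dense $G_\delta$ subset of $\Hom(\Gamma,\Iso(\bU))$, which is exactly the statement. There is really no obstacle to overcome here: all the serious work has already been done in Theorem~\ref{th:Gdelta} (which turns extreme amenability of $\cl{\pi(\Gamma)}$ into a manageable $G_\delta$ condition) and in Lemma~\ref{l:L0denseU} (whose proof combines the metric Mackey induction of Lemma~\ref{l:metric-ind}, the Kat\v{e}tov extension to get into $\Iso(\bU)$, and the density statement of Proposition~\ref{p:genericL0} for $L^0(\T)$ or $L^0(H_\Gamma)$). The present theorem is the formal assembly of these two inputs together with the classical extreme amenability of $L^0$ of a compact group.
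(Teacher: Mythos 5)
Your proof is correct and follows exactly the same route as the paper, which states that the theorem "follows immediately from Lemma~\ref{l:L0denseU} and Theorem~\ref{th:Gdelta}." The only elaboration you add, namely making explicit that $L^0(\T)$ and $L^0(H_\Gamma)$ are extremely amenable by Glasner's theorem, is a correct and helpful unpacking of what "immediately" means.
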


\begin{remark*}
We note that, in view of Proposition~\ref{p:discon}, Theorem~\ref{t:generic-Aut(mu)} and Theorem~\ref{t:generic-iso(U)} are best possible, in the sense that the condition on $\Gamma$ cannot be replaced with a more general one.
\end{remark*}

Recall that, whenever $\Gamma$ is a countable group, the space $D(\Gamma)$ of all bi-invariant distances on $\Gamma$ endowed with the pointwise convergence topology is a Polish space.

\begin{theorem} \label{t:extamisgdelta}
Let $\Gamma$ be a countable abelian group which is either unbounded or is bounded and has property \eqref{eq:ast}. Then the set
$$\{\rho \in D(\Gamma) : (\Gamma, \rho) \text{ is extremely amenable}\} $$
is dense $G_{\delta}$ in $D(\Gamma)$.
\end{theorem}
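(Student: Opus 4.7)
The $G_\delta$ claim is given by the remark following Theorem~\ref{th:Gdelta}: the combinatorial characterization~\eqref{extam2} of extreme amenability transfers verbatim to any group carrying a bi-invariant distance and is evidently $G_\delta$ in the pointwise convergence topology on $D(\Gamma)$. It remains to prove density.

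Fix $\rho_0 \in D(\Gamma)$, a finite $F \subseteq \Gamma$, and $\eps > 0$; we produce $\rho \in D(\Gamma)$ with $|\rho(1,\gamma) - \rho_0(1,\gamma)| < \eps$ for every $\gamma \in F$ such that $(\Gamma,\rho)$ is extremely amenable. The plan is to realize $\rho$ as the pullback $\rho(\gamma,\delta) = d_{L^0(\T)}(\pi(\gamma),\pi(\delta))$ of the canonical bi-invariant metric on $L^0(\T)$ (replaced by $L^0(H_\Gamma)$ in the bounded case) along an injective homomorphism $\pi \colon \Gamma \to L^0(\T)$ with dense image. Once such $\pi$ is in hand, $\rho$ is a genuine bi-invariant distance on $\Gamma$ and the completion of $(\Gamma,\rho)$ is topologically isomorphic to $L^0(\T)$, which is extremely amenable by Glasner's theorem; since extreme amenability passes between a topological group and any dense subgroup (cf.\ the discussion following Theorem~\ref{th:Gdelta}), $(\Gamma,\rho)$ is extremely amenable.

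The construction of $\pi$ follows the template of the proof of Lemma~\ref{l:L0denseU}. Let $\Delta = \langle F \rangle$, a finitely generated abelian subgroup. The first step is to produce a homomorphism $\psi_0 \colon \Delta \to \T^n$ (for $n$ allowed to be arbitrarily large) whose pullback of the standard bi-invariant sum-metric on $\T^n$ is $\eps/2$-close to $\rho_0$ on $F$; this is a matching problem for a finite configuration of distances that can be solved by a suitable choice of characters $\phi_1, \dots, \phi_n \in \hat\Delta$, since the map sending $n$-tuples of characters to the corresponding pulled-back distances on $F$ has locally surjective differential onto neighborhoods of any prescribed target once $n$ is sufficiently large. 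We then extend $\psi_0$ to a homomorphism $\Gamma \to \T^n$ using extension of characters, and compose with the diagonal inclusion $\T^n \hookrightarrow L^0(\T^n) \cong L^0(\T)$ as constant functions to obtain an initial $\pi_0 \in \Hom(\Gamma, L^0(\T))$.

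Let $U \subseteq \Hom(\Gamma, L^0(\T))$ be the open set of homomorphisms whose pullback is still $\eps$-close to $\rho_0$ on $F$; $U$ contains $\pi_0$ and is hence non-empty. Proposition~\ref{p:genericL0} supplies a dense $G_\delta$ set of $\pi$ with dense image in $L^0(\T)$; furthermore, for each $\gamma \in \Gamma \setminus \set{1}$, the set $\set{\pi : \pi(\gamma) \neq 1}$ is open and dense by an elementary perturbation argument using that $L^0(\T)$ is divisible and contains elements of every finite order, so the intersection over $\gamma$'s is the dense $G_\delta$ set of injective homomorphisms. Intersecting $U$ with these two dense $G_\delta$ sets yields some $\pi$ that is injective, has dense image, and whose pullback $\rho$ lies within $\eps$ of $\rho_0$ on $F$, completing the proof. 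The main technical obstacle is the matching step producing $\psi_0$, but once the freedom in the choice of $n$ is exploited, this is a routine computation with characters.
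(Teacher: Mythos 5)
The $G_\delta$ claim is handled the same way the paper handles it, via the remark at the end of Section~\ref{s:extamisgdelta}; that part is fine. For density, however, your approach diverges from the paper's in a way that leaves a genuine gap. The paper does not try to realize an approximation of $\rho_0$ directly inside $L^0(\T)$. Instead it uses the universality of $\bU$: the Kat\v{e}tov construction produces an isometric embedding $j \colon (\Gamma,\rho_0) \to \bU$ together with a $\pi \in \Hom(\Gamma,\Iso(\bU))$ conjugating left translation on $j(\Gamma)$ to the isometric action, so that \emph{any} bi-invariant distance on $\Gamma$ is (exactly) a pullback from $\Iso(\bU)$. The density step is then entirely delegated to Theorem~\ref{t:generic-iso(U)}, which supplies an injective $\tilde\pi$ close to $\pi$ on a finite set with $\cl{\tilde\pi(\Gamma)}$ extremely amenable; one small remaining point is that the pulled-back pseudometric need not separate points, which the paper fixes by adding $\tfrac{1}{N}\sigma(\tilde\pi(\gamma_1),\tilde\pi(\gamma_2))$ for large $N$.

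The gap in your argument is at the ``matching'' step: you assert that any bi-invariant distance $\rho_0$ restricted to a finite $F$ can be approximated to within $\eps/2$ by the pullback of the sum-metric on $\T^n$ along an $n$-tuple of characters, justified only by a sentence about a ``locally surjective differential.'' This is not a routine computation. It is a nontrivial structural claim about which length functions on $\Delta$ lie in the closed convex cone generated by the functions $\gamma \mapsto \min(1,d_\T(\phi(\gamma),1))$, $\phi \in \hat\Delta$, and it is precisely the kind of obstruction that the paper avoids by routing through $\bU$, where universality makes the corresponding matching trivially exact rather than merely approximate. Nothing in Lemma~\ref{l:densehatGamma} or Proposition~\ref{p:genericL0} gives you control over the \emph{metric} geometry of $\phi|_F$, only over density of the image; the former is a much finer statement. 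Unless you supply an actual proof that the relevant convex cone is dense in the space of length functions restricted to $F$ (which would in fact be an interesting statement in its own right, and which I do not believe follows from anything in the paper), the density half of your proof does not go through. The remaining scaffolding (extending $\psi_0$ to $\Gamma$, diagonally embedding $\T^n$ as constants in $L^0(\T)$, perturbing via Proposition~\ref{p:genericL0} to obtain dense image and injectivity) is correct, but it all rests on the unproven matching step. I would recommend either proving the matching claim carefully, or adopting the paper's route via $\Iso(\bU)$ and Theorem~\ref{t:generic-iso(U)}, which is cleaner precisely because $\bU$ is a universal separable metric space.
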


\begin{proof}
Given the remark at the end of Section~\ref{s:extamisgdelta}, it is enough to prove that the set of $\rho$ for which $(\Gamma, \rho)$ is extremely amenable is dense in $D(\Gamma)$.

Let $\rho_0$ denote an invariant distance on $\Gamma$. By the Kat\v{e}tov construction, there exists a metric embedding $j \colon (\Gamma, \rho_0) \to \bU$ and a homomorphism $\pi \colon \Gamma \to \Iso(\bU)$ such that the left translation action $\Gamma \actson (\Gamma, \rho_0)$ and $\Gamma \actson^\pi j(\Gamma)$ are isomorphic. Fix a finite subset $A$ of $\Gamma$ and $\eps > 0$, and let $O$ be the open neighborhood of $\rho_0$ in $D(\Gamma)$ consisting of all $\rho$ such that
$$\forall \gamma \in A \ |\rho(\gamma,1_{\Gamma}) - \rho_0(\gamma,1_{\Gamma})| < \eps. $$
By Theorem~\ref{t:generic-iso(U)}, we know that there exists $\tilde \pi \in \Hom(\Gamma, \Iso(\bU))$ such that $\tilde \pi(\Gamma)$ is extremely amenable, $\tilde \pi$ is injective (since the generic $\tilde \pi$ is injective), and
$$ \forall \gamma \in A \ d(\tilde \pi(\gamma)(j(1_{\Gamma})),\pi(\gamma)(j(1_{\Gamma}))) < \eps. $$
Let $\sigma$ denote a left-invariant distance on $\Iso(\bU)$ compatible with its topology and note that, as $\Gamma$ is abelian, $\sigma|_{\tilde \pi(\Gamma)}$ is bi-invariant. For any integer $N>0$, one can consider the bi-invariant distance $\rho_N$ on $\Gamma$ defined by
\[ \begin{split}
\rho_N(\gamma_1,\gamma_2) &= d(\tilde \pi(\gamma_1)(j(1_{\Gamma})), \tilde \pi(\gamma_2)(j(1_{\Gamma}))) + \frac{1}{N}\sigma(\tilde \pi(\gamma_1), \tilde \pi(\gamma_2)) \\
&= \rho_0(\gamma_1, \gamma_2) + \frac{1}{N}\sigma(\tilde \pi(\gamma_1), \tilde \pi(\gamma_2)).
\end{split} \]
Then $\rho_N$ belongs to $O$ for $N$ large enough; also, $\tilde \pi \colon (\Gamma, \rho_N) \to (\tilde \pi(\Gamma), \sigma)$ is a uniform isomorphism, whence $(\Gamma, \rho_N)$ is isomorphic to $\tilde \pi(\Gamma)$ as a topological group and therefore extremely amenable.
\end{proof}

\section{The extreme amenability of $U(\mcH)$, $\Aut(\mu)$ and $\Iso(\bU)$} \label{s:free-groups}
In this section, we describe how to deduce the extreme amenability of $U(\mcH)$, $\Aut(\mu)$, and $\Iso(\bU)$ from the extreme amenability of $L^0(K)$, where $K$ is compact (see \cite{Glasner1998a} or \cite{Pestov2006}), and Theorem~\ref{th:Gdelta}. Even though the extreme amenability of those groups is known, we think that our ``uniform'' proof is interesting in its own right.

We recall that any Polish group is generically $\aleph_0$-generated; it is then clear that the following theorem, along with Theorem~\ref{th:Gdelta} and the fact that $L^0(U(m))$ is extremely amenable for all $m$, implies the extreme amenability of our three groups.
\begin{theorem}
Let $G$ be either of $U(\mcH)$, $\Aut(\mu)$, $\Iso(\bU)$. Then the set
\[
\set{\pi \in \Hom(\F_\infty, G) : \exists m \ \cl{\pi(\F_\infty)} \cong L^0(U(m))}
\]
is dense in $\Hom(\F_\infty, G)$.
\end{theorem}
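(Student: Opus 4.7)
The plan is to prove density by exhibiting, for each nonempty basic open set $V \sub \Hom(\F_\infty, G) = G^{\N}$, some $\pi \in V$ with $\cl{\pi(\F_\infty)} \cong L^0(U(m))$ for some $m$. Such a basic open is determined by a tuple $(g_1, \ldots, g_n) \in G^n$, some finite test data, and a tolerance $\eps > 0$. The common strategy has two steps: first, exhibit $h_1, \ldots, h_n$ in the basic open that all lie in a single closed subgroup $H \leq G$ with $H \cong L^0(U(m))$; second, extend to $\pi \in \Hom(\F_\infty, G)$ by choosing $h_{n+1}, h_{n+2}, \ldots$ as a dense sequence in $H$. Since $H$ is closed, $\cl{\pi(\F_\infty)} = H \cong L^0(U(m))$ as required.

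For $G = U(\mcH)$: let $\mcK$ be the finite-dimensional subspace of $\mcH$ spanned by the test vectors $v_j$ together with the vectors $g_i v_j$ (after orthonormalizing the $v_j$'s). Define $h_i^{(0)} \in U(\mcK)$ by extending the isometry $v_j \mapsto g_i v_j$ to a unitary of $\mcK$. Identify $\mcH$ with $L^2([0,1], \mcK)$ so that $\mcK$ corresponds to the subspace of constant functions; then the unitary $h_i := \id_{L^2([0,1])} \otimes h_i^{(0)}$ maps each $v_j$ to $g_i v_j$ and lies in the closed subgroup $L^0([0,1], U(\mcK)) \leq U(\mcH)$ as a constant function, with $m := \dim \mcK$.

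For $G = \Iso(\bU)$: apply Pestov's theorem \cite{Pestov2006a}*{Theorem~1} to the residually finite group $\F_n$ to obtain a homomorphism $\pi_1 \colon \F_n \to \Iso(\bU)$ with finite image $F$ that $\eps/2$-approximates our tuple on the test points. Embed $F \hookrightarrow U(|F|)$ via the regular representation; apply Lemma~\ref{l:metric-ind} to extend the $F$-action on the $F$-orbit of the test points to a $U(|F|)$-action on some bounded Polish metric space $Y$; pass to $L^0(U(|F|)) \actson L^1(Y)$ and then extend via the Kat\v{e}tov construction, exactly as in the proof of Lemma~\ref{l:L0denseU}. This yields a closed embedding $L^0(U(|F|)) \hookrightarrow \Iso(\bU)$ whose constants contain a copy of $F$, and by homogeneity of $\bU$ one may arrange that this copy agrees with $\pi_1(\F_n)$ on the test points.

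The case $G = \Aut(\mu)$ is the most delicate; it contains what we regard as the main obstacle. First approximate each $g_i$ by a dyadic permutation $\sigma_i \in S_{2^N} \leq \Aut(\mu)$ (acting on a sufficiently fine dyadic partition of $X$) so that $\mu(\sigma_i A_j \triangle g_i A_j) < \eps/2$; the $\sigma_i$ generate a finite group $F$. The difficulty is that $F$ need not act freely on $X$, which blocks the natural conjugation of $F$ into the constants of an embedded $L^0(U(m)) \hookrightarrow \Aut(\mu)$. To overcome this, consider $X' := X \times F$ with the diagonal action $\tilde\sigma_i \colon (x, g) \mapsto (\sigma_i x, \sigma_i g)$, which is \emph{free}, and choose a measure isomorphism $\phi \colon X \to X'$ sending each test set $A_j$ to $A_j \times F$ (possible because the two relevant finite subalgebras have matching atom measures and both spaces are standard atomless). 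The conjugates $\sigma_i' := \phi^{-1} \tilde\sigma_i \phi \in \Aut(X)$ still satisfy $\sigma_i' A_j = \sigma_i A_j$ (using $\sigma_i F = F$), so they lie in the basic open, and they generate a free $F$-action on $X$. By the uniqueness of free finite-group actions on atomless standard probability spaces, this $F$-action is conjugate via some $\psi \in \Aut(\mu)$ to the natural free $F$-action on $[0,1] \times U(|F|)$ given by constants in $L^0(U(|F|))$ (where $F \hookrightarrow U(|F|)$ via the regular representation). Conjugating $L^0(U(|F|))$ by $\psi^{-1}$ thus produces a closed subgroup of $\Aut(\mu)$ isomorphic to $L^0(U(|F|))$ and containing the $\sigma_i'$, completing the construction.
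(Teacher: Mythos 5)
Your overall strategy matches the paper's: first show that the set of tuples $(g_1,\ldots,g_n) \in G^n$ contained in a closed copy of some $L^0(U(m))$ is dense, then pad with a dense sequence in that copy to obtain $\pi \in \Hom(\F_\infty,G)$ with $\cl{\pi(\F_\infty)} \cong L^0(U(m))$. The $U(\mcH)$ and $\Iso(\bU)$ cases are handled essentially as in the paper (the paper works with a fixed sequence of subspaces $\mcH(m)$ rather than the span of the test vectors, but this is a cosmetic difference). The one genuine departure is the $\Aut(\mu)$ case. The paper first approximates the tuple by a \emph{finite} action factoring through a finite group $\Delta$ (using the locally finite dense subgroup of $\Aut(\mu)$), and then invokes the density of \emph{free} $\Delta$-actions in $\Hom(\Delta,\Aut(\mu))$ to assume the approximant is free. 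You instead approximate by dyadic permutations generating a finite group $F$ and then \emph{construct} a nearby free $F$-action by hand by passing to $X \times F$ with the diagonal action. This replaces a density argument by an explicit construction and is somewhat more self-contained; after that, both proofs rest on the same facts (the closed embedding $L^0(Y,\nu,U(m)) \hookrightarrow \Aut(Y \times U(m))$, the freeness of the constant $U(m)$-action, and uniqueness up to conjugacy of free finite-group actions on atomless standard probability spaces).

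One point to tighten in the $\Aut(\mu)$ step: you need $\phi$ to respect not just the test sets $A_j$ but the full finite $F$-invariant algebra they generate. Indeed $\sigma_i'(A_j) = \phi^{-1}\big(\tilde\sigma_i(A_j \times F)\big) = \phi^{-1}(\sigma_i A_j \times F)$, and $\sigma_i A_j$ need not be one of the test sets, so the identity $\sigma_i' A_j = \sigma_i A_j$ requires $\phi(\sigma_i A_j) = \sigma_i A_j \times F$ as well. This is easily arranged: choose $\phi$ to send each atom of the algebra generated by $\set{\tau A_j : \tau \in F,\ j \leq k}$ to the corresponding atom $\times F$ (the atom measures still match). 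If you take the $A_j$ to be a fine dyadic partition, this algebra is automatically $F$-invariant since the $\sigma_i$ permute the dyadic cells, and the issue disappears.
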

\begin{proof}
It suffices to show that for every $n$,
\[
\set{(g_1, \ldots, g_n) \in G^n : \exists m \ \langle g_1, \ldots, g_n \rangle \text{ is contained in a closed copy of } L^0(U(m))}
\]
is dense. Indeed, any open set in $\Hom(\F_\infty, G)$ is specified by imposing conditions on finitely many generators, say $a_1, \ldots, a_n$; if we can arrange that $\pi(a_1), \ldots, \pi(a_n)$ are contained in a closed copy of $L^0(U(m))$, we can use the other generators to ensure that $\cl{\pi(\F_\infty)} \cong L^0(U(m))$.

\smallskip
\emph{The unitary group}. We fix an orthonormal basis $\set{e_i}$ of $\mcH$ and let $\mcH(m) = \Span(e_1, \ldots, e_m)$. Identify the finite-dimensional unitary group $U(m)$ with the subgroup of $U(\mcH)$ that acts trivially on $\mcH(m)^\perp$. Fix a non-empty open subset $V$ of $U(\mcH)^n$. Find $m$ sufficiently large and a tuple $(g_1, \ldots, g_n) \in V \cap U(m)^n$ such that whenever $(h_1, \ldots, h_n) \in U(\mcH)^n$ agrees with $(g_1, \ldots, g_n)$ on $\mcH(m)$, then $(h_1, \ldots, h_n) \in V$.

The action $U(m) \actson \mcH(m)$ extends naturally to an action
\[
L^0(U(m)) = L^0([0,1], \lambda, U(m)) \actson L^2([0,1], \lambda, \mcH(m)).
\]
This gives a homeomorphic embedding of $L^0(U(m))$ as a closed subgroup of the unitary group of $L^2([0,1], \lambda, \mcH(m))$. Denote by $e \colon U(m) \to U(L^2([0,1], \lambda, \mcH(m)))$ the composition of the embeddings $U(m) \to L^0(U(m)) \to U(L^2([0,1], \lambda, \mcH(m)))$. Let $T \colon L^2([0,1], \lambda, \mcH(m)) \to \mcH$ be a unitary isomorphism that is the identity on $\mcH(m)$ (we see $\mcH(m)$ as a subspace of $L^2([0,1], \lambda, \mcH(m))$ by identifying it with the subspace of constant functions), so that $T$ conjugates the two actions of $U(m)$ on $\mcH(m)$. Finally, let $(h_1, \ldots, h_n) = T e((g_1, \ldots, g_n)) T^{-1}$ and observe that $(h_1, \ldots, h_n) \in V$ and $h_1, \ldots, h_n$ are contained in the subgroup $T(L^0(U(m))) \leq U(\mcH)$.

\smallskip

\emph{The isometry group of the Urysohn space}. In that case, the proof is basically the same as that of Lemma~\ref{l:L0denseU}. Pick a non-empty open subset $V$ of $\Iso(\bU)^n$, and use the fact that $\bF_n$ is residually finite and \cite{Pestov2006a}*{Theorem~1} to find $(g_1,\ldots,g_n)$ belonging to $V$ and generating a finite subgroup $\Delta$. Then pick a finite subset $A$ of $\bU$ such that $\Delta \actson A$ and any tuple coinciding with $(g_1,\ldots,g_n)$ on $A$ belongs to $V$; let $\Delta'$ denote the image of $\Delta$ in $\Iso(A)$. Let $m = |\Delta'|$; then $\Delta'$ embeds in the unitary group $U(m)$ (by its left-regular representation). Using Lemma~\ref{l:metric-ind}, the action $\Delta' \actson A$ extends to a faithful action $U(m) \actson Y$ for some bounded Polish metric space $Y$, which then extends to an action $L^0(U(m)) \actson L^1(Y)$, and $L^0(U(m))$ is isomorphic to the corresponding subgroup of $\Iso(L^1(Y))$.
Applying the Kat\v{e}tov construction and using the homogeneity of $\bU$, we obtain $(h_1,\ldots,h_n) \in V$ such that the subgroup generated by $(h_1,\ldots,h_n)$ is contained in an isomorphic copy of $L^0(U(m))$.

\smallskip

\emph{The automorphism group of $(X,\mu)$}.
Call an action $\F_n \actson (X, \mu)$ \df{finite} if it factors through the action of a finite group. It is an immediate consequence of the fact that $\Aut(\mu)$ has a locally finite dense subgroup that the finite actions of $\F_n$ are dense. Fix now an open neighborhood $V$ of a finite action $\F_n \actson^{\phi_0} (X,\mu)$ that factors through the finite group $\Delta$. As free actions of $\Delta$ are dense in $\Hom(\Delta, \Aut(\mu))$, we can assume that the action $\psi_0$ of $\Delta$ induced by $\phi_0$ is free. Let $m$ be the order of $\Delta$ and, as above, identify $\Delta$ with a subgroup of $U(m)$ via the left-regular representation. Let $(Y, \nu)$ be a standard probability space and consider the measure-preserving action $L^0(Y, \nu, U(m)) \actson^a Y \times U(m)$ defined by
\[
f \cdot (y, u) = (y, f(y)u)
\]
and observe that this action gives an embedding of $L^0(Y, \nu, U(m))$ as a closed subgroup of $\Aut(Y \times U(m), \nu \times \text{Haar})$, which we identify with $\Aut(\mu)$. As $a|_{\Delta}$ is free and any two free actions of $\Delta$ are conjugate, we can assume that $\psi_0 = a|_{\Delta}$.
Denoting by $\phi$ the action of $\F_n$ induced by $a|_{\Delta}$, we thus have that $\phi \in V$ and $\phi(\F_n)$ is contained in an isomorphic copy of $L^0(U(m))$.
\end{proof}

\begin{remark*}
In fact, each of these three groups is generically $2$-generated: for $U(\mcH)$ this is proved on page 26 of \cite{Kechris2010}; for $\Aut(\mu)$ it is a result of Prasad~\cite{Prasad1981}; and for $\Iso(\bU)$, this is a recent result of Slutsky~\cite{Slutsky2011}*{4.19}. Hence, for the above proof, it would be enough to only consider pairs of elements of $G$. If one wants to follow that route, one also needs the fact that $L^0(U(m))$ is generically $2$-generated; actually, $L^0(K)$ is generically $2$-generated whenever $K$ is compact, connected and metrizable, a fact that can be proved using the arguments of Proposition \ref{p:genericL0} and a classical result of Schreier and Ulam~\cite{Schreier1933} stating that any compact, connected and metrizable topological group is generically $2$-generated.
\end{remark*}

\section{Generic properties of homomorphisms from torsion-free abelian groups into $\Aut(\mu)$ and $\Iso(\bU)$}
We now turn to the question: how much do generic properties of $\cl{\pi(\Gamma)}$ depend on the (abelian) group $\Gamma$? Of course, if $\Gamma$ is bounded, then the group $\cl{\pi(\Gamma)}$ is also bounded and, in general, it will be different from a generic $\cl{\pi(\Z)}$. However, as it turns out, if the target group is $\Aut(\mu)$ or $\Iso(\bU)$, and one restricts attention to free abelian groups, the generic properties of $\cl{\pi(\Z^d)}$ do not depend on $d$. As another application of our techniques (together with some Baire category tools developed in Appendix~\ref{a:category}), we also identify exactly the centralizers of generic $\pi(\Gamma)$ for torsion-free groups $\Gamma$.

\subsection{A criterion for monotheticity}
We now prove a simple criterion that will allow us to show that the generic $\cl{\pi(\Gamma)}$ is monothetic (of course, $\cl{\pi(\Z)}$ is always monothetic but this is not a priori clear for other groups $\Gamma$).
\begin{prop} \label{p:criterion-monothetic}
Let $G$ be a Polish group, $\Gamma$ a countable group. Suppose that for every $\delta \in \Gamma$, $\gamma \in \Gamma \sminus \set{1}$, the set
\[
A_{\gamma, \delta} = \set{\pi \in \Hom(\Gamma, G) : \pi(\delta) \in \langle \pi(\gamma) \rangle}
\]
is dense. Then both sets
\[
B_1 = \set{\pi \in \Hom(\Gamma, G) : \cl{\pi(\Gamma)} \text{ is generically monothetic}}
\]
and
\[
B_2 = \set{\pi \in \Hom(\Gamma, G) : \langle \pi(\gamma) \rangle \text{ is dense in }\cl{\pi(\Gamma)} \text{ for every } \gamma \in \Gamma \sminus \set{1} }
\]
are dense $G_\delta$.
\end{prop}
\begin{proof}
First we check that $B_1$ is $G_\delta$. Let $\set{U_n}$ be a basis for the topology of $G$. We have
\begin{multline*}
\pi \in B_1 \iff \Big( \forall m, n \ \big(\pi(\Gamma) \cap U_m \ne \emptyset \text{ and } \pi(\Gamma) \cap U_n \ne \emptyset \big) \\
\implies \big(\exists \gamma \in \Gamma, k \in \Z \ \pi(\gamma) \in U_m \text{ and } \pi(\gamma^k) \in U_n \big) \Big).
\end{multline*}
Indeed, if $\cl{\pi(\Gamma)}$ is generically monothetic, then the set of topological generators is dense; taking $U_n, U_m$ as above, there exists a topological generator
$g$ of $\cl{\pi(\Gamma)}$ belonging to $U_m$, and some $g^k$ must belong to $U_n$. Since $g$ is a limit of elements of $\pi(\Gamma)$, one can find $\gamma$ such that
$\pi(\gamma) \in U_m$ and $\pi(\gamma^k) \in U_n$. Conversely, if the condition on the right-hand side of the equivalence above is satisfied, then for each open non-empty subset $U$ of $\cl{\pi(\Gamma)}$, the set of all $g \in \cl{\pi(\Gamma)}$ such that $\langle g \rangle \cap U \ne \emptyset$ is open and dense in $\cl{\pi(\Gamma)}$; the intersection of all these sets as $U$ ranges over a countable basis for the topology of $\cl{\pi(\Gamma)}$ is the set of all topological generators of $\cl{\pi(\Gamma)}$, which is thus dense $G_{\delta}$.

Then, we show that  $B_{\gamma,\delta} = \set{\pi \in \Hom(\Gamma, G) : \pi(\delta) \in \cl{\langle \pi(\gamma) \rangle}}$ is $G_{\delta}$.
As $B_2$ is the intersection of all $B_{\gamma,\delta}$ ($\gamma \in \Gamma \sminus \{1\}$, $\delta \in \Gamma$), this will show that $B_2$ is $G_{\delta}$. We have
\[
\pi \in B_{\gamma,\delta} \iff \Big( \forall n  \big(\pi(\delta) \in U_n  \implies \exists k \in \Z \ \pi(\gamma^k) \in U_n \big) \Big).
\]
Finally, as $B_2 \sub B_1$, it suffices to check that $B_2$ is dense. This is a direct consequence of Baire's theorem and the fact that $A_{\gamma,\delta} \sub B_{\gamma,\delta}$.
\end{proof}

The following theorem is the main technical tool for our applications.
\begin{theorem}\label{t:generic-monothetic}
Let $G$ be one of $\Aut(\mu)$ and $\Iso(\bU)$, and let $\Gamma$ be a countable, torsion-free, abelian group. Then the sets
\[
\set{\pi \in \Hom(\Gamma, G) \colon \cl{\pi(\Gamma)} \text{ is generically monothetic}}
\]
and
\[
\set{\pi \in \Hom(\Gamma,G) \colon \forall \gamma \in \Gamma \setminus \{1\} \ \cl{\langle \pi(\gamma) \rangle }=\cl{\pi(\Gamma)}}
\]
are dense $G_\delta$ in $\Hom(\Gamma, G)$.
\end{theorem}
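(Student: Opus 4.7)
The plan is to apply Proposition~\ref{p:criterion-monothetic} directly, as the two sets in the statement coincide with $B_1$ and $B_2$ from that proposition. The entire proof therefore reduces to verifying its hypothesis: for every $\gamma \in \Gamma \setminus \{1\}$ and every $\delta \in \Gamma$, the set $A_{\gamma,\delta} = \set{\pi : \pi(\delta) \in \cl{\langle \pi(\gamma)\rangle}}$ is dense in $\Hom(\Gamma, G)$.

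Fix such $\gamma, \delta$ and a non-empty open $V \subseteq \Hom(\Gamma, G)$. Since $\Gamma$ is torsion-free and non-trivial it is unbounded, so I will exploit the density results already in hand: Lemma~\ref{l:L0denseU}\eqref{i:l:L0d} for $G = \Iso(\bU)$, and, for $G = \Aut(\mu)$, the specific free action constructed in the proof of Theorem~\ref{t:generic-Aut(mu)} combined with the density in $\Hom(\Gamma, \Aut(\mu))$ of the conjugacy class of any free action. In either case, $V$ will contain some $\pi_0$ of the form $\pi_0 = \iota \circ \bar\pi_0$, where $\iota \colon L^0(\T) \hookrightarrow G$ is a closed topological group embedding and $\bar\pi_0 \colon \Gamma \to L^0(\T)$ has dense image. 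By continuity of the composition-with-$\iota$ map $\Hom(\Gamma, L^0(\T)) \to \Hom(\Gamma, G)$, there will be an open neighborhood $W$ of $\bar\pi_0$ in $\Hom(\Gamma, L^0(\T))$ whose image lies in $V$. Inside $W$ I will aim to find $\bar\pi$ such that $\bar\pi(\gamma)$ is a topological generator of $L^0(\T)$: for any such $\bar\pi$, the homomorphism $\pi = \iota \circ \bar\pi$ belongs to $V$ and satisfies $\cl{\langle \pi(\gamma)\rangle} = \iota(L^0(\T)) = \cl{\pi(\Gamma)} \ni \pi(\delta)$, placing $\pi$ in $V \cap A_{\gamma,\delta}$ as required.

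The technical heart of the argument will therefore be the sub-claim that, for $\Gamma$ torsion-free abelian and $\gamma \neq 1$, the set
\[
T = \set{\bar\pi \in \Hom(\Gamma, L^0(\T)) : \bar\pi(\gamma) \text{ topologically generates } L^0(\T)}
\]
is dense $G_\delta$. My plan is to analyze the evaluation homomorphism $r \colon \Hom(\Gamma, L^0(\T)) \to L^0(\T)$ given by $r(\bar\pi) = \bar\pi(\gamma)$, viewed as a continuous homomorphism between two Polish abelian groups. Since $\gamma$ has infinite order, $\langle\gamma\rangle \cong \Z$, and since $L^0(\T)$ is divisible (as a group of measurable maps into the divisible group $\T$), every homomorphism $\langle\gamma\rangle \to L^0(\T)$ extends algebraically to $\Gamma$, so $r$ is surjective; the open mapping theorem for Polish groups then forces $r$ to be open. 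Combined with Proposition~\ref{p:genericL0}\eqref{i:gen:ub} applied to $\Gamma = \Z$, which produces a dense $G_\delta$ set of topological generators of $L^0(\T)$, this will yield that $T = r^{-1}(\set{\text{topological generators}})$ is itself dense $G_\delta$. The only place where torsion-freeness is genuinely used --- and consequently the main obstacle one must respect --- is precisely the surjectivity of $r$: if $\gamma$ had finite order, $\bar\pi(\gamma)$ would be confined to a finite subgroup of $L^0(\T)$ and could never topologically generate it.
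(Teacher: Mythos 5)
Your proposal is correct, but it takes a genuinely different route from the paper's proof. The paper verifies the hypothesis of Proposition~\ref{p:criterion-monothetic} by arithmetic control over finite quotients: for $\Aut(\mu)$, a Rokhlin-type lemma followed by co-induction from $\Delta'$ up to $\Gamma$; for $\Iso(\bU)$, a refinement of the Pestov--Uspenskij residual-finiteness machinery (Lemmas~\ref{l:easymodif1},~\ref{l:easymodif2}, Theorem~\ref{l:primeorder-metric}). In both cases the arithmetic heart is to pick distinct primes $p_i$ all exceeding $\max|s_i|$ so that $\gamma=e_1^{s_1}\cdots e_k^{s_k}$ maps to a \emph{generator} of the cyclic group $\prod_i \Z(p_i)$, giving $\pi(\delta)\in\langle\pi(\gamma)\rangle$ on the nose (no closure). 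You instead reuse the already-established density of $L^0(\T)$-factoring homomorphisms (Lemma~\ref{l:L0denseU} for $\Iso(\bU)$; the Gaussian construction plus density of free-action conjugacy classes for $\Aut(\mu)$), then work entirely inside $\Hom(\Gamma, L^0(\T))$. Your key observation --- that the evaluation-at-$\gamma$ map is a surjective continuous homomorphism of Polish abelian groups (surjectivity from divisibility of $L^0(\T)$ together with $\gamma$ having infinite order), hence open by the open mapping theorem, so the preimage of the dense $G_\delta$ set of topological generators supplied by Proposition~\ref{p:genericL0}\eqref{i:gen:ub} is again dense $G_\delta$ --- is a clean soft replacement for the paper's hard arithmetic. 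One minor notational slip: what you call $A_{\gamma,\delta}$ (with closure) is the paper's $B_{\gamma,\delta}$; the hypothesis of Proposition~\ref{p:criterion-monothetic} is stated for the smaller set $A_{\gamma,\delta}=\{\pi:\pi(\delta)\in\langle\pi(\gamma)\rangle\}$, but inspection of its proof shows only density of $B_{\gamma,\delta}$ is used, so your weaker conclusion suffices. Your approach is shorter and bypasses co-induction and the new residual-finiteness lemmas entirely; the trade-off is that the paper's Theorem~\ref{l:primeorder-metric} also underpins Theorem~\ref{t:conj-meager} (meagerness of conjugacy classes in $\Hom(\Gamma,\Iso(\bU))$), which your route does not give for free.
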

\begin{proof}
We show that both $\Aut(\mu)$ and $\Iso(\bU)$ satisfy the hypothesis of Proposition~\ref{p:criterion-monothetic}. In what follows, denote by $e_1, \ldots, e_d$ be the standard generators of $\Z^d$.

We begin with the case $G = \Aut(\mu)$; the following is a consequence of the Rokhlin lemma for free abelian groups \cite{Conze1972}*{Theorem~3.1}.
\begin{lemma}\label{l:primeorder-measure}
Let $\eps > 0$. Then for every free $\pi \in \Hom(\Z^d, \Aut(\mu))$ and all $n_1,\ldots,n_d \in \N$, there exists $\psi \in \Hom(\Z^d, \Aut(\mu))$ such that for all $i$,
\begin{itemize}
\item $\psi(e_i)^{n_i}=1$;
\item $\mu\big(\{x \colon \pi(e_i)(x) \ne \psi(e_i)(x) \}\big) \le \eps + 1/n_i$.
\end{itemize}
\end{lemma}

For $\mcA=\{A_1,\ldots,A_n\}$ a finite measurable partition of $X$ and $g,h \in G$, we use the notation $d(g\mcA, h \mcA)< \eps$ as shorthand for the more cumbersome $\forall i \leq n \ \mu(g(A_i) \sdiff h(A_i)) < \eps$.

Fix $\gamma, \delta \in \Gamma$, $\gamma \neq 1$ and let $U$ be a non-empty open set in $\Hom(\Gamma, \Aut(\mu))$.
We can assume that $U$ is given by
\[
\pi \in U \iff d(\pi(e_i)\mcA, \pi_0(e_i)\mcA) < \eps \quad \text{for } i = 1, \ldots, k,
\]
where $\eps >0$, $\pi_0 \in \Hom(\Gamma,\Aut(\mu))$, $\mcA$ is a finite measurable partition of $X$, $e_1, \ldots, e_k$ are independent elements of $\Gamma$ (i.e., $\langle e_1, \ldots, e_k \rangle \cong \Z^k$) and $\gamma, \delta \in \langle e_1, \ldots, e_k \rangle$. By changing the basis $e_1, \ldots, e_k$ if necessary, we can further assume that $\gamma = e_1^{s_1} \cdots e_k^{s_k}$ and $s_i \neq 0$ for all $i$. Put $\Delta = \langle e_1, \ldots, e_k \rangle$.

By Lemma~\ref{l:primeorder-measure}, there exist distinct prime numbers $p_1, \ldots, p_k$ with $\min(p_i) > \max (s_i)$ such that if $\Delta' = \Z(p_1) \times \cdots \times \Z(p_k)$ and $q \colon \Delta \to \Delta'$ denotes the natural quotient map, then there exists an action $\Delta' \actson^\psi X$ such that for all $i$, $d((\psi \circ q)(e_i)\mcA, \pi_0(e_i)\mcA) < \eps$. Note also that $\Delta'$ is cyclic and $q(\gamma)$ is a generator. Consider now $\Delta'$ as a subgroup of $\T$, so that $q$ becomes a character of $\Delta$. Then $q$ extends to a homomorphism $\Gamma \to \T$ (still denoted by $q$); denote its image by $\Gamma'$.

Let $\Gamma' \actson^{\psi'} X^{\Gamma'/\Delta'}$ be the action of $\Gamma'$ co-induced from $\psi$ (see, e.g., \cite{Kechris2010}*{10 (G)} for details on co-induction) and let $\Theta \colon X^{\Gamma' / \Delta'} \to X$ be the factor map, $\Theta(\omega) = \omega(\Delta')$, which is $\Delta'$-equivariant. Denote by $\mcB$ the finite measurable partition generated by $\mcA$ and $\psi(\mcA)$, let $T \colon X^{\Gamma'/\Delta'} \to X$ be any measure-preserving isomorphism such that $T^{-1}(B) = \Theta^{-1}(B)$ for all $B \in \mcB$, and define $\pi \colon \Gamma \to \Aut(\mu)$ by $\pi = q \circ (T\psi'T^{-1})$. It is clear that $\pi \in U$ and $\pi(\delta) \in \langle \pi(\gamma) \rangle$ (as $\pi(\gamma)$ is a generator of $\pi(\Delta)$). This concludes the proof for $G= \Aut(\mu)$.

For $\Iso(\bU)$, we adapt an argument of Pestov and Uspenskij~\cite{Pestov2006a}.
\begin{lemma}\label{l:easymodif1}
Let $n$ be a non-negative integer, $\Lambda=\Z^d * \F_n$ and $C$ be a finite subset of $\Lambda$. Then there exists an integer $M$ such that for any $p_1,\ldots,p_d \ge M$, there exists a finite group $L$ and a morphism $q \colon \Lambda \to L$ such that $q|_C$ is injective and $q(e_i)$ has order $p_i$ for all $i \leq d$.

\end{lemma}
\begin{proof}
Let $a_1, \ldots, a_n$ be generators of $\F_n$. Consider each element of $C$ as a reduced word on the letters $e_1, \ldots, e_d, a_1, \ldots, a_n$ and let $N$ denote the greatest integer such that a subword of the form $e_i^{\pm N}$ appears in some element of $C$. Then, pick any $p_i$ greater than $M = 2N+1$. Let $q_0 \colon \Lambda \to ( \prod_{i=1}^d  \Z(p_i) ) * \F_n$ be the homomorphism defined by extending the canonical projection $\Z^d \to \prod_{i=1}^d  \Z(p_i)$. Note that the restriction $q_0|_C$ is one-to-one, and each $q_0(e_i)$ has order $p_i$. Now let $C'=q_0(\Z^d) \cup q_0(C)$. Since free products of residually finite groups are residually finite (see \cite{Gruenberg1957}, or \cite{Pestov2006}*{5.3.5}) and $C'$ is finite, there exists a finite group $L$ and a morphism $q_1 \colon ( \prod_{i=1}^d  \Z(p_i) ) * \F_n \to L$
such that the restriction $q_1|_{C'}$ is injective. Setting $q = q_1 \circ q_0$, the lemma is proved.
\end{proof}

With this new ingredient, we obtain the following variant on Lemma~4 of \cite{Pestov2006a}.
\begin{lemma}\emph{(variation on Pestov--Uspenskij \cite{Pestov2006a})} \label{l:easymodif2}
Let $\Lambda=\Z^d * \F_n$, and $K$ be a finite subset of $\Lambda$. Fix a left-invariant pseudometric on $\Lambda$ whose restriction to $K$ is a metric. Then there exists $M \in \N$ such that for any $p_1,\ldots,p_d \ge M$ there exists a finite group $L$, a morphism $q \colon \Lambda \to L$ and a left-invariant metric on $L$ such that:
\begin{itemize}
 \item the restriction of $q$ to $K$ is distance-preserving;
 \item $q(e_i)$ has order $p_i$.
\end{itemize}
\end{lemma}

\begin{proof}
The proof is exactly the same as in \cite{Pestov2006a}, using Lemma~\ref{l:easymodif1} above instead of simply using the fact that $\Lambda$ is residually finite.
\end{proof}

Redoing the proof of Theorem~1 in \cite{Pestov2006a}, with Lemma~\ref{l:easymodif2}
replacing their Lemma~4, we obtain the following result.
\begin{theorem}[variation on Pestov--Uspenskij \cite{Pestov2006a}*{Theorem~1}] \label{l:primeorder-metric}
Let $G= \Iso(\bU)$ and $U$ be a non-empty open subset of $\Hom(\Z^d,G)$. Then there exists $M$ such that for any $p_1,\ldots ,p_d \ge M$ there exists $\pi \in U$ such that
$\pi(e_i)$ has order $p_i$ for all $i$.
\end{theorem}

We now proceed to show that the hypothesis of Proposition~\ref{p:criterion-monothetic} is satisfied if $\Gamma$ is a countable, torsion-free, abelian group and $G=\Iso(\bU)$.
Let $U$ be a non-empty open subset  of $\Hom(\Gamma,G)$, let $\delta \in \Gamma$ and $\gamma \in \Gamma \setminus\{1\}$, and assume without loss of generality that there exist a finite $A \subseteq \bU$, $\eps >0$, $\pi_0 \in \Hom(\Gamma,G)$ and $e_1,\ldots,e_k \in \Gamma$ such that
\[
\pi \in U \iff \forall a \in A \ d \big(\pi(e_i)(a),\pi_0(e_i)(a) \big) < \eps.
\]
As above, we also assume that $e_1,\ldots,e_k$ are independent, that $\delta, \gamma \in \Delta = \langle e_1,\ldots,e_k \rangle$ and that $\gamma = e_1^{s_1} \cdots e_k^{s_k}$ with $s_i \ne 0$ for all $i$. Applying Theorem~\ref{l:primeorder-metric}, we can find distinct prime numbers $p_1,\ldots,p_k$ such that $\min p_i > \max s_i$ and, letting $\Delta'$ denote $\Z(p_1) \times \cdots \times \Z(p_k)$ and $q \colon \Delta \to \Delta'$ the natural quotient homomorphism, an action  $\Delta' \actson^{\psi} \bU$ such that
\[
\forall a \in A \ \forall i \leq k \  d \big(\psi \circ q(e_i)(a),\pi_0(e_i)(a) \big) < \eps.
\]
Note that, since $q(\gamma)$ is a generator of $\Delta'$, one has that $\psi \circ q (\delta) \in \langle \psi \circ q(\gamma) \rangle$. Now, $B = \psi(\Delta') \cdot A$ is finite; applying Lemma~\ref{l:metric-ind}, we can extend $\psi \circ q \colon \Delta \actson B$ to $\tilde \psi \colon \Gamma \actson X$, where $X$ is a countable metric space, and the construction ensures that we still have $\tilde \psi(\delta) \in \langle \tilde \psi(\gamma) \rangle$. Use the Kat\v{e}tov construction to extend $\tilde \psi$ to an isometric action, still denoted $\tilde \psi$, of $\Gamma$ on $\bU$. We still have $\tilde \psi (\delta) \in \langle \tilde \psi(\gamma) \rangle$ and, using the homogeneity of $\bU$, we may assume that
\[
\forall a \in A \ \forall i \leq k \ \tilde \psi(e_i)(a)= \psi \circ q (e_i)(a).
\]
Hence $\tilde \psi \in U$, and we are done.
\end{proof}

\subsection{Applications}\label{applications}
In this subsection, we derive some consequences of Theorem~\ref{t:generic-monothetic}. To that end, we need a generalization of the classical Kuratowski--Ulam theorem, which we discuss in Appendix~\ref{a:category}. For the moment, let us simply state the definition of a category-preserving map, which we use below: a continuous map between Polish spaces $f \colon X \to Y$ is \df{category-preserving} if $f^{-1}(A)$ is comeager in $X$ whenever $A$ is comeager in $Y$; equivalently, $f$ is category-preserving if it is continuous and $f^{-1}(A)$ is dense whenever $A$ is open and dense in $Y$. A coordinate projection from $X \times X$ to $X$ is an example of a category-preserving map, as is any open map; the Kuratowski--Ulam theorem, usually stated for projections, extends to all category-preserving maps (see Theorem~\ref{t:KU2}).

Category-preserving maps have already been considered in the ergodic theory literature: first by King~\cite{King2000} and later by Ageev~\cite{Ageev2000}, Tikhonov~\cite{Tikhonov2003}, and Stepin--Eremenko~\cite{Stepin2004}. The definition used in these papers is different from ours but equivalent to it (see \cite{Melleray2012}, where this matter is discussed in some detail).


The following lemma provides, in a rather special situation, a simple criterion for the restriction of homomorphisms to be a category-preserving map.
\begin{lemma} \label{l:commuting-categorypreserving}
Let $G$ be a Polish group and $\Gamma$ a countable, abelian group. Assume that for a dense set of
$\pi \in \Hom(\Gamma \times \Z, G)$, $\cl{\pi(\Gamma \times \Z)} = \cl{\pi(\Gamma)}$. Then the restriction $\Res \colon \Hom(\Gamma \times \Z, G) \to \Hom(\Gamma, G)$ is category-preserving.
\end{lemma}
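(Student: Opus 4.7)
The plan is to verify directly that for every dense open $U \sub \Hom(\Gamma, G)$, the preimage $\Res^{-1}(U)$ is dense in $\Hom(\Gamma \times \Z, G)$; the continuity of $\Res$ is obvious for the topology of pointwise convergence. The key observation I will exploit is that $\cl{\pi(\Gamma \times \Z)} = \cl{\pi(\Gamma)}$ precisely when $\pi(t) \in \cl{\pi(\Gamma)}$, where $t$ denotes the generator of the $\Z$-factor. Thus on the dense set $D \sub \Hom(\Gamma \times \Z, G)$ produced by the hypothesis, one can approximate $\pi(t)$ arbitrarily well by some $\pi(\gamma)$ with $\gamma \in \Gamma$.

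Given a basic open set $V$ around some $\pi_0 \in \Hom(\Gamma \times \Z, G)$, specified by closeness on a finite $K \sub \Gamma \times \Z$ within some $\eps > 0$, I would first use the density of $D$ to adjust $\pi_0$ so that $\pi_0 \in D$. Then I would pick $\gamma \in \Gamma$ with $\pi_0(\gamma)$ close to $\pi_0(t)$, introduce the homomorphism $q \colon \Gamma \times \Z \to \Gamma$ defined by $q(g, n) = g\gamma^n$ (this is indeed a homomorphism precisely because $\Gamma$ is abelian), and consider $\pi_1 := \pi_0|_\Gamma \circ q$. Choosing $\gamma$ close enough to $\pi_0(t)$, and exploiting that the power maps $x \mapsto x^n$ for the finitely many integers $n$ appearing as second coordinates in $K$ are uniformly continuous on a neighborhood of $\pi_0(t)$, guarantees that $\pi_1$ still lies in $V$.

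The payoff of the factoring $\pi_1 = \pi_0|_\Gamma \circ q$ is that it decouples the two factors: for any $\sigma \in \Hom(\Gamma, G)$ sufficiently close to $\pi_0|_\Gamma$ on the finite set $q(K) \sub \Gamma$, the lift $\tilde\sigma := \sigma \circ q \in \Hom(\Gamma \times \Z, G)$ remains inside $V$ and satisfies $\tilde\sigma|_\Gamma = \sigma$. Since $U$ is dense in $\Hom(\Gamma, G)$, such a $\sigma$ can be chosen inside $U$, whence $\tilde\sigma$ witnesses $V \cap \Res^{-1}(U) \neq \emptyset$. The only genuinely nontrivial idea is the introduction of the collapse $q$, which is available only because $\Gamma \times \Z$ is abelian; the remainder is a routine juggling of approximation parameters, and I do not anticipate any serious obstacle there.
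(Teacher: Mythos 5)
Your proposal is correct and follows essentially the same route as the paper: both proofs hinge on the collapse homomorphism $q(\gamma, n) = \gamma\gamma_0^n$ (for $\gamma_0 \in \Gamma$ with $\pi_0(\gamma_0)$ close to $\pi_0(t)$), which makes the lift of any $\sigma \in \Hom(\Gamma, G)$ near $\pi_0|_\Gamma$ land back in the original open set. The only difference is cosmetic: the paper's proof works with basic open sets of the special form $\{\pi : \Res(\pi) \in U,\ \pi(t) \in V\}$, so it can shrink $U$ once and avoid the explicit continuity-of-powers bookkeeping you do; your version handles a general basic open set on a finite $K \subseteq \Gamma \times \Z$ directly (and in fact ordinary continuity of the finitely many maps $x \mapsto \pi_0(\gamma_i)x^{n_i}$ at $\pi_0(t)$ suffices — you do not need uniform continuity).
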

\begin{proof}
Let $A$ be a dense subset of $\Hom(\Gamma, G)$; we will show that $\Res^{-1}(A)$ is dense in $\Hom(\Gamma \times \Z, G)$. Let $O$ be an open, non-empty subset of $\Hom(\Gamma \times \Z, G)$; we can assume that it is of the form
\[
O = \set{\pi \in \Hom(\Gamma \times \Z, G) : \Res(\pi) \in U \text{ and } \pi(g) \in V},
\]
where $g \in \Gamma \times \Z$ is a generator of $\Z$, $U$ is an open subset of $\Hom(\Gamma, G)$, and $V$ is an open subset of $G$. Let $\pi_0 \in O$ be such that $\cl{\pi_0(\Gamma \times \Z)} = \cl{\pi_0(\Gamma)}$. Then there exists $\gamma_0 \in \Gamma$ with $\pi_0(\gamma_0) \in V$. By shrinking $U$ if necessary, we can assume that $\psi \in U \implies \psi(\gamma_0) \in V$. Let now $\psi \in U \cap A$ be arbitrary and define $\pi \colon \Gamma \times \Z \to G$ by $\pi(\gamma,n) = \psi(\gamma) \psi(\gamma_0)^n$. Then clearly $\Res(\pi) \in A$ and $\pi \in O$.
\end{proof}

For the next results, we see $\Z$ embedded in $\Z^d$ as the subgroup generated by $e_1$.
\begin{theorem}\label{t:res-cat1}
Let $G$ be either $\Aut(\mu)$ or $\Iso(\bU)$. Then the restriction map
\[
\Res \colon \Hom(\Z^d,G) \to \Hom(\Z, G)
\]
is category-preserving.
\end{theorem}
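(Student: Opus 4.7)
The plan is to combine Lemma~\ref{l:commuting-categorypreserving} with Theorem~\ref{t:generic-monothetic} and proceed by induction on $d$. The base case $d = 1$ is trivial since the restriction map is then the identity on $\Hom(\Z, G)$.

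For the inductive step, I would factor the restriction $\Hom(\Z^d, G) \to \Hom(\Z, G)$ through $\Hom(\Z^{d-1}, G)$: write $\Z^d = \Z^{d-1} \times \Z$ with the last $\Z$-factor generated by $e_d$, and let $\Res_1 \colon \Hom(\Z^d, G) \to \Hom(\Z^{d-1}, G)$ be restriction to $\langle e_1, \ldots, e_{d-1} \rangle$, $\Res_2 \colon \Hom(\Z^{d-1}, G) \to \Hom(\Z, G)$ restriction to $\langle e_1 \rangle$. The map $\Res_2$ is category-preserving by the inductive hypothesis, and the composition of two category-preserving maps is again category-preserving (if $A$ is comeager in $\Hom(\Z, G)$, pulling back by $\Res_2$ yields a comeager set in $\Hom(\Z^{d-1}, G)$, and pulling that back by $\Res_1$ yields a comeager set in $\Hom(\Z^d, G)$; continuity of the composition is immediate). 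So it suffices to show that $\Res_1$ is category-preserving.

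To do this, I would apply Lemma~\ref{l:commuting-categorypreserving} with $\Gamma = \Z^{d-1}$ and the extra $\Z$-factor being $\langle e_d \rangle$. The hypothesis to verify is that for a dense set of $\pi \in \Hom(\Z^d, G)$, one has $\cl{\pi(\Z^d)} = \cl{\pi(\Z^{d-1})}$. This is exactly where Theorem~\ref{t:generic-monothetic} enters: applied to the torsion-free abelian group $\Z^d$, it provides a dense $G_\delta$ set of $\pi$ for which $\cl{\langle \pi(\gamma) \rangle} = \cl{\pi(\Z^d)}$ for every nonzero $\gamma$. Choosing $\gamma = e_1 \in \Z^{d-1}$, we get the chain
\[
\cl{\pi(\Z^d)} \supseteq \cl{\pi(\Z^{d-1})} \supseteq \cl{\langle \pi(e_1) \rangle} = \cl{\pi(\Z^d)},
\]
forcing equality. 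Hence the hypothesis of Lemma~\ref{l:commuting-categorypreserving} holds on a dense set, $\Res_1$ is category-preserving, and the induction closes.

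There is no substantive obstacle here beyond bookkeeping: the two tools needed have already been assembled, and the only mild subtlety is that the relevant hypothesis of Lemma~\ref{l:commuting-categorypreserving} is verified not by showing that the whole of $\pi(\Z^{d-1})$ is dense in $\cl{\pi(\Z^d)}$ directly, but rather by noting that \emph{a single element} $e_1 \in \Z^{d-1}$ already does the job generically, thanks to the strong monotheticity statement in Theorem~\ref{t:generic-monothetic}.
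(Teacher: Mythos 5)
Your proof is correct and takes essentially the same route as the paper: factoring $\Res$ through the chain $\Hom(\Z^d,G)\to\Hom(\Z^{d-1},G)\to\cdots\to\Hom(\Z,G)$ and showing each one-step restriction is category-preserving via Lemma~\ref{l:commuting-categorypreserving} together with Theorem~\ref{t:generic-monothetic}. The paper states this compactly as ``compose the restriction maps''; your inductive phrasing and the explicit use of $\gamma = e_1$ to verify the hypothesis of the lemma simply spell out the same argument in more detail.
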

\begin{proof}
Since the composition of category-preserving maps is category-preserving, the theorem follows from applying Theorem~\ref{t:generic-monothetic} and Lemma~\ref{l:commuting-categorypreserving} to the sequence of restriction maps
\[
\Hom(\Z^{d}, G) \to \Hom(\Z^{d-1}, G) \to \cdots \to \Hom(\Z, G). \qedhere
\]
\end{proof}
\begin{remark*}
After this paper had been written, we found out that Theorem 8.8 (in the case of $\Aut(\mu)$) also appears as Teorema~1 in Tikhonov~\cite{Tikhonov2003}. His proof is somewhat different from ours and relies on King's theorem about the generic existence of square roots. A more general result was also obtained by Ageev roughly at the same time; see Subsection~\ref{ss:AutM} below.
\end{remark*}

\begin{cor}\label{c:res-cat2}
Let $G$ be either $\Aut(\mu)$ or $\Iso(\bU)$. Let $P$ be a property of abelian Polish groups such that the set
\[
\set{\pi \in \Hom(\Z, G) : \cl{\pi(\Z)} \text { has property } P}
\]
has the Baire property. Then the following are equivalent for any positive integer $d$:
\begin{enumerate} \romanenum
\item \label{i:c:res-cat2} for the generic $\pi \in \Hom(\Z, G)$, $\cl{\pi(\Z)}$ has property $P$;
\item \label{ii:c:res-cat2} for the generic $\pi \in \Hom(\Z^d, G)$, $\cl{\pi(\Z^d)}$ has property $P$.
\end{enumerate}
\end{cor}
\begin{proof}
\eqref{i:c:res-cat2} $\Rightarrow$ \eqref{ii:c:res-cat2}. Let $A$ be the set of $\pi \in \Hom(\Z, G)$ such that $\cl{\pi(\Z)}$ has $P$. Let $\Res \colon \Hom(\Z^d, G) \to \Hom(\langle e_1 \rangle, G)$ be the restriction map. We have by Theorem~\ref{t:res-cat1} that $\Res^{-1}(A)$ is comeager in $\Hom(\Z^d, G)$, and Theorem~\ref{t:generic-monothetic} shows that $B=\set{\pi \in \Hom(\Z^d, G) : \cl{\pi(\Z^d)}= \cl{\pi(\langle e_1 \rangle)}}$ is comeager. Then for any $\pi \in \Res^{-1}(A) \cap B$, we have that $\cl{\pi(\Z^d)}$ has property $P$.

\eqref{ii:c:res-cat2} $\Rightarrow$ \eqref{i:c:res-cat2}. Both $\Aut(\mu)$ and $\Iso(\bU)$ have a dense conjugacy class (for $\Aut(\mu)$, this is a classical result of Rokhlin and for $\Iso(\bU)$, it is due to Kechris--Rosendal~\cite{Kechris2007a}) and therefore if $P$ is not generic in $\Hom(\Z, G)$, its negation is, and we can apply the above argument to the negation of $P$.
\end{proof}

Foreman and Weiss~\cite{Foreman2004} have shown that conjugacy classes are meager in $\Hom(\Gamma, \Aut(\mu))$ for any amenable group $\Gamma$. We prove a result in that direction for $\Iso(\bU)$, generalizing the result of Kechris stating that conjugacy classes in $\Iso(\bU)$ are meager.

\begin{theorem}\label{t:conj-meager}
Let $\Gamma$ be a non-trivial, torsion-free, countable, abelian group. Then conjugacy classes in $\Hom(\Gamma,\Iso(\bU))$ are meager.
\end{theorem}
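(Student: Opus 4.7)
Fix $\gamma_0 \in \Gamma \setminus \{1\}$; torsion-freeness ensures $\langle\gamma_0\rangle \cong \Z$. Set $G = \Iso(\bU)$ and consider the evaluation map $r \colon \Hom(\Gamma, G) \to G$, $r(\pi) = \pi(\gamma_0)$, which is continuous and $G$-equivariant for the conjugation actions on source and target. Any $G$-orbit $[\pi_0]$ in $\Hom(\Gamma, G)$ therefore maps into the single conjugacy class $[\pi_0(\gamma_0)]_G$ in $G$. My plan is to combine two ingredients: (a) every conjugacy class in $G$ is meager, and (b) $r$ is category-preserving in the sense of Appendix~\ref{a:category}, so that preimages of meager sets are meager. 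Together these show that $[\pi_0] \subseteq r^{-1}([\pi_0(\gamma_0)]_G)$ is meager.

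For (a), I would follow a technique of Rosendal, exploiting the universality of $\bU$ via the Kat\v{e}tov construction. Given any $g_0 \in G$ and any basic open neighborhood $V$ of $g_0$ (specified by finitely many approximate displacements on a finite subset of $\bU$), one constructs, in a parameter $t$ ranging over an uncountable set, isometries $g_t \in V$ with pairwise distinct conjugation invariants --- for instance, by prescribing the value $d(g_t^n(x_0), x_0)$ on a specially chosen orbit point and exponent $n$. The Kat\v{e}tov extension freely realizes such prescriptions, so one obtains pairwise non-conjugate isometries dense in $V$, which forces every conjugacy class to be nowhere comeager and hence meager.

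For (b), I would extend the argument of Lemma~\ref{l:commuting-categorypreserving}. Given a basic open set $O = \{\pi : \pi(\gamma_i)(a_j) \in V_{i,j}\} \subseteq \Hom(\Gamma, G)$ with $\gamma_0$ among the $\gamma_i$'s, and an open dense $A \subseteq G$, first apply Theorem~\ref{t:generic-monothetic} and Lemma~\ref{l:L0denseU} to produce $\pi_1 \in O$ with $\cl{\pi_1(\Gamma)}$ a prescribed copy of $L^0(\T)$ sitting as a closed subgroup of $G$ and $\pi_1(\gamma_0)$ a topological generator. Each $\pi_1(\gamma_i)$ is then approximated by some power $\pi_1(\gamma_0)^{n_i}$, so a continuity argument yields a neighborhood $W$ of $\pi_1(\gamma_0)$ in $G$ inside which every replacement $h$ for $\pi_1(\gamma_0)$ gives $h^{n_i}$ in $V_i$. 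Density of topological generators in $L^0(\T)$ (Proposition~\ref{p:genericL0}) together with the Kat\v{e}tov flexibility afforded by Lemma~\ref{l:metric-ind} (which lets us choose the embedding of $L^0(\T)$ into $G$ to interact nontrivially with the prescribed open dense $A$) should then allow us to pick $h \in W \cap A$ that is a topological generator of some copy of $L^0(\T)$ extending the data on the finite set $\{\pi_1(\gamma_i)(a_j)\}$, and from $h$ we build $\pi \in O$ with $\pi(\gamma_0) = h \in A$. The hard part will be step (a) --- producing, via Kat\v{e}tov extensions, a genuinely unbounded family of conjugation-distinguishable isometries in every basic neighborhood --- while step (b) is essentially a variation on techniques already developed in the paper, with the only subtlety being the choice of embedding $L^0(\T) \hookrightarrow G$ needed so that $A$ meets a dense set of generators of the embedded copy.
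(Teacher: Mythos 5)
Your reduction is logically clean: factor the evaluation map $r(\pi) = \pi(\gamma_0)$, invoke Kechris's theorem that conjugacy classes in $\Iso(\bU)$ are meager, and pull back along a category-preserving map. But the proposal misplaces the difficulty. Step (a) is not the hard part: it is a known theorem of Kechris, which the present theorem is explicitly generalizing, so no Kat\v{e}tov re-proof is needed. The genuine gap is in step (b), where you assert that $r \colon \Hom(\Gamma, \Iso(\bU)) \to \Iso(\bU)$ is category-preserving for an arbitrary non-trivial torsion-free abelian $\Gamma$ and $\gamma_0 \in \Gamma \setminus \{1\}$. The paper establishes this only when $\Gamma = \Z^d$ and $\gamma_0 = e_1$ is a basis element (Theorem~\ref{t:res-cat1}), and the proof there relies on Lemma~\ref{l:commuting-categorypreserving}, which requires $\langle\gamma_0\rangle$ to be a \emph{direct factor}. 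For $\Gamma = \Q$ with $\gamma_0 = 1 \in \Q$, the subgroup $\langle\gamma_0\rangle$ is not a direct factor, and category-preservation of $\Hom(\Q,\Iso(\bU)) \to \Iso(\bU)$ is not established anywhere in the paper — indeed it falls under the paper's concluding open question (3). Your sketch for (b) does not overcome this: given dense open $A \subseteq \Iso(\bU)$, you need $h$ close to $\pi_1(\gamma_0)$ with $h \in A$ such that $h$ is a topological generator of a copy of $L^0(\T)$ through which $\pi_1$'s constraints on $\gamma_1,\ldots,\gamma_k$ can all be realized, but $A$ may entirely avoid the closed nowhere dense subgroup $\cl{\pi_1(\Gamma)}$, and building a new compatible copy of $L^0(\T)$ through an $h \in A$ is precisely the content of the category-preservation result you would be trying to prove.

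The paper's actual argument is quite different and sidesteps this obstruction. It uses Rosendal's technique of topological similarity classes: for each infinite $S \subseteq \N$ and each $\gamma \in \Gamma \setminus \{1\}$, one shows (by passing to a finitely generated $\Delta \cong \Z^d$ containing $\gamma$, applying the Pestov--Uspenskij-style Theorem~\ref{l:primeorder-metric}, and extending via Lemma~\ref{l:metric-ind} and Kat\v{e}tov) that $\{\pi : \exists n \in S,\ \pi(\gamma^n) = 1\}$ is dense. Hence the conjugation-invariant set $A_S = \{\pi : \pi(\gamma^{s_n}) \to 1 \text{ for some } (s_n) \in S^\N\}$ is comeager for every infinite $S$; a comeager conjugacy class would lie in every $A_S$, which forces $\pi(\gamma^n) \to 1$, hence $\pi(\gamma) = 1$ for all $\gamma$, a contradiction. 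This argument only requires \emph{density} of certain almost-torsion representations, not category-preservation of a restriction to a non-factor cyclic subgroup, which is why it works uniformly for all torsion-free abelian $\Gamma$.
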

\begin{proof}
We use Rosendal's technique \cite{Rosendal2009a} of considering topological similarity classes. For the remainder of the proof, fix $\gamma \in \Gamma \setminus\{1\}$.

Let $S$ be an infinite subset of $\N$. We first claim that
\[
\set{\pi \in \Hom(\Gamma,G) \colon \exists n \in S \ \pi(\gamma^n)=1}
\]
is dense in $\Hom(\Gamma,\Iso(\bU))$. To see this, pick a non-empty open subset $U$. There exists a finitely generated subgroup $\Delta \cong \Z^d$ containing $\gamma$ and a non-empty open $V \sub \Hom(\Delta,\Iso(\bU))$ such that, denoting by $\Res$ the restriction map from $\Hom(\Gamma,\Iso(\bU))$ to $\Hom(\Delta, \Iso(\bU))$, one has
\[
\set{\pi \in \Hom(\Gamma,\Iso(\bU)) \colon \Res(\pi) \in V } \sub U .
\]
Let $\gamma_1,\ldots,\gamma_d$ be generators of $\Delta$.
Applying Theorem \ref{l:primeorder-metric} to $\Delta$, and using the fact that $S$ is infinite, we can find $\pi \in \Hom(\Delta,\Iso(\bU))$ and $n \in S$ such that $\pi(\gamma_i^n)=1$ for all $i \in \{1,\ldots,d\}$ (hence also $\pi(\gamma^n)=1)$. Let $\Delta'=\prod_{i=1}^d \Z(n)$; embed $\Delta'$ in $\T^d$ and let $q \colon \Delta \to \T^d$ the natural morphism, which we may extend to a morphism (still denoted by $q$) from $\Gamma$ to $\T^d$.

We can now find a finite $A \sub \bU$ and an action $q(\Delta) \actson^{\pi} A$ such that, for any action $q(\Gamma) \actson^{\phi} \bU$ extending $q(\Delta) \actson^\pi A$, one has $\phi \circ q \in U$.
Applying Lemma \ref{l:metric-ind} to $q(\Delta) \le q(\Gamma)$, using Kat\v{e}tov's construction and the homogeneity of $\bU$, we obtain such an action $\phi$, and then $\phi \circ q \in U$ and $\phi \circ q (\gamma^n)=1$. This proves the claim.

Now, let $V$ be a non-empty open subset of $\Iso(\bU)$ containing $1$ and $S$ an infinite subset of $\N$. It is clear that
\[
\set{\pi \in \Hom(\Gamma,\Iso(\bU)) \colon  \exists n \in S \ \pi(\gamma^n) \in V}
\]
is open, and the claim implies that it is dense. It follows that, for any infinite $S \sub \N$, the set
\[
A_S=\set{\pi \colon \exists (s_n) \in S^{\N} \colon \pi(\gamma^{s_n}) \to 1}
\]
is comeager. This set is conjugacy-invariant, hence if some $\pi$ had a comeager conjugacy class, $\pi$ would belong to $A_S$ for all $S$, which would imply that $\pi(\gamma^n)$ converges to $1$. This is only possible if $\pi(\gamma)=1$; since this is true for all $\gamma \in \Gamma \setminus\{1\}$, $\pi$ is trivial and we obtain a contradiction.
\end{proof}

We finally turn to studying centralizers of generic representations. Recall that $\mcC(\pi)$ denotes the centralizer of $\pi$.
\begin{theorem} \label{t:centralizers}
Let $G$ be either $\Aut(\mu)$ or $\Iso(\bU)$ and $\Gamma$ be a countable, non-trivial, torsion-free, abelian group. Then for the generic $\pi \in \Hom(\Gamma, G)$, $\mcC(\pi) = \cl{\pi(\Gamma)}$.
\end{theorem}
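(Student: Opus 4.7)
The strategy is to realize each centralizer $\mcC(\pi)$ as a fiber of a suitable restriction map and transfer a generic property from the total space down to the fibers via a Kuratowski--Ulam type argument. Let $t$ denote a generator of the $\Z$-factor in $\Gamma \times \Z$; then the restriction $\Res \colon \Hom(\Gamma \times \Z, G) \to \Hom(\Gamma, G)$ together with the evaluation $\pi' \mapsto \pi'(t)$ identifies $\Hom(\Gamma \times \Z, G)$ with $\set{(\pi, g) : \pi \in \Hom(\Gamma, G),\ g \in \mcC(\pi)}$. Under this identification, the fiber $\Res^{-1}(\pi)$ is precisely $\mcC(\pi)$.

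Since $\Gamma \times \Z$ is itself countable, non-trivial, torsion-free, and abelian, Theorem~\ref{t:generic-monothetic} applies to it and gives, for generic $\pi' \in \Hom(\Gamma \times \Z, G)$, that $\cl{\langle \pi'(\gamma) \rangle} = \cl{\pi'(\Gamma \times \Z)}$ for every non-trivial $\gamma$. Fixing any non-trivial $\gamma \in \Gamma$ (available since $\Gamma$ is non-trivial), this forces $\cl{\pi'(\Gamma)} = \cl{\pi'(\Gamma \times \Z)}$ on a comeager, hence dense, set of $\pi'$, which is precisely the hypothesis of Lemma~\ref{l:commuting-categorypreserving}. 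Thus $\Res$ is category-preserving. Moreover, for any such generic $\pi'$, the element $\pi'(t)$ belongs to $\cl{\pi'(\Gamma \times \Z)} = \cl{\pi'(\Gamma)}$, so the set
\[
A = \set{\pi' \in \Hom(\Gamma \times \Z, G) : \pi'(t) \in \cl{\pi'(\Gamma)}}
\]
is comeager in $\Hom(\Gamma \times \Z, G)$.

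Applying the generalized Kuratowski--Ulam theorem for category-preserving maps (Theorem~\ref{t:KU2} of Appendix~\ref{a:category}) to $\Res$ and the comeager set $A$, one concludes that for generic $\pi \in \Hom(\Gamma, G)$ the fiber $A \cap \Res^{-1}(\pi)$ is comeager in $\Res^{-1}(\pi) = \mcC(\pi)$. Under our identification this fiber is exactly $\cl{\pi(\Gamma)}$, which is a closed subgroup of the Polish (hence Baire) group $\mcC(\pi)$; a closed comeager subset of a Baire space equals the whole space, so $\cl{\pi(\Gamma)} = \mcC(\pi)$ generically, as required. The only delicate point is to invoke the generalized Kuratowski--Ulam theorem in exactly the form needed for this restriction map---everything else reduces to Theorem~\ref{t:generic-monothetic} and a Baire-category endgame.
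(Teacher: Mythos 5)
Your proposal is correct and follows essentially the same route as the paper's proof: realize $\mcC(\pi)$ as the fiber $\Res^{-1}(\pi)$, use Theorem~\ref{t:generic-monothetic} together with Lemma~\ref{l:commuting-categorypreserving} to show $\Res$ is category-preserving, apply the generalized Kuratowski--Ulam theorem (Theorem~\ref{t:KU2}), and finish by noting that a closed comeager subgroup of the Baire group $\mcC(\pi)$ must be all of $\mcC(\pi)$. Your write-up is simply a more detailed spelling-out of the same argument, including making the comeager set $A$ and the fiber identification explicit.
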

\begin{proof}
Denote by $\Res \colon \Hom(\Gamma \times \Z, G) \to \Hom(\Gamma, G)$ the restriction map and note that for any $\pi \in \Hom(\Gamma, G)$, we can naturally identify $\mcC(\pi)$ with $\Res^{-1}(\set{\pi})$. By Theorem~\ref{t:generic-monothetic},
\[
\forall^* \psi \in \Hom(\Gamma \times \Z, G) \ \cl{\psi(\Gamma \times \Z)} = \cl{\psi(\Gamma)}.
\]
Applying Lemma~\ref{l:commuting-categorypreserving} and Theorem~\ref{t:KU2} to $\Res$, we obtain
\[
\forall^*\pi \in \Hom(\Gamma, G) \forall^* g \in \mcC(\pi) \ g \in \cl{\pi(\Gamma)}.
\]
As $\cl{\pi(\Gamma)}$ is a closed subgroup in $\mcC(\pi)$, this implies the conclusion.
\end{proof}
\begin{remark*}
In the case $\Gamma = \Z$ and $G = \Aut(\mu)$, Theorem~\ref{t:centralizers} is due to Chacon--Sxhwartzbauer \cite{Chacon1969}. The proof presented here is different and self-contained.
\end{remark*}
\begin{remark*}
As pointed out by S. Solecki, the methods of this section, together with Theorem~\ref{t:KU2}, can be used to prove Lemma~3 in \cite{Solecki2012}. Solecki's lemma, however, had been obtained prior to our work.
\end{remark*}


\subsection{Improvements in the case of $\Aut(\mu)$} \label{ss:AutM}
Using some additional theory, it is possible to extend, in the ergodic-theoretic case, the results of the previous subsection to more general abelian groups. The following result of Ageev generalizes the theorem of King~\cite{King2000} that the restriction map $\Res \colon \Hom(\Z, \Aut(\mu)) \to \Hom(n\Z, \Aut(\mu))$ is category-preserving.
\begin{theorem}[Ageev~\cite{Ageev2003}] \label{th:Ageev}
Let $\Delta \leq \Gamma$ be countable abelian groups such that $\Delta$ is infinite cyclic. Then $\Res \colon \Hom(\Gamma, \Aut(\mu)) \to \Hom(\Delta, \Aut(\mu))$ is category-preserving.
\end{theorem}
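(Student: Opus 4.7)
The plan is to reduce this theorem to a combination of King's theorem on the $n$-th power map $T\mapsto T^n$ on $\Aut(\mu)$ and the monotheticity and coinduction techniques developed in Subsection~8.1 (in particular Theorem~\ref{t:generic-monothetic} and Lemma~\ref{l:commuting-categorypreserving}).

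First, I would reduce to the case of finitely generated $\Gamma$. Every basic open set in $\Hom(\Gamma,G)$ is pulled back via restriction from $\Hom(\Gamma',G)$ for some finitely generated subgroup $\Delta\leq\Gamma'\leq\Gamma$, and any composition of category-preserving maps is again category-preserving, so it is enough to prove that $\Res\colon\Hom(\Gamma',G)\to\Hom(\Delta,G)$ is category-preserving for all such $\Gamma'$. By the structure theorem, $\Gamma'\cong\mathbb{Z}^{d}\oplus F$ with $F$ finite abelian.

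Second, whenever $\Delta$ embeds into a strictly larger cyclic subgroup $\Delta^{\ast}\leq\Gamma'$ with $[\Delta^{\ast}:\Delta]=n$, one factors
\[
\Res = R_{2}\circ R_{1},\qquad R_{1}\colon\Hom(\Gamma',G)\to\Hom(\Delta^{\ast},G),\qquad R_{2}\colon\Hom(\Delta^{\ast},G)\to\Hom(\Delta,G),
\]
where $R_{2}$ corresponds under the identifications $\Hom(\Delta^{\ast},G)\cong\Hom(\Delta,G)\cong G$ to the $n$-th power map and is therefore category-preserving by King's theorem. Iterating this step, matters are reduced to the case where $\Delta$ is already maximal cyclic in $\Gamma'$.

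Third, assuming $\Delta$ is maximal cyclic in $\Gamma'$, I would peel off generators of $\Gamma'/\Delta$ one at a time and apply Lemma~\ref{l:commuting-categorypreserving} at each step: the subgroup is enlarged by a cyclic factor $C$ (either $\mathbb{Z}$ or $\mathbb{Z}(m)$), and one must verify that for a dense set of $\pi\in\Hom(\Lambda\oplus C,G)$ one has $\cl{\pi(\Lambda\oplus C)}=\cl{\pi(\Lambda)}$. For $C\cong\mathbb{Z}$ this is essentially the conclusion of Theorem~\ref{t:generic-monothetic} and its proof adapts directly, the key ingredient being the prime-order Rokhlin approximation of Lemma~\ref{l:primeorder-measure} combined with coinduction from a finite cyclic quotient. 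The main obstacle, as I see it, lies in the finite case $C\cong\mathbb{Z}(m)$: one needs an enhancement of Lemma~\ref{l:primeorder-measure} that simultaneously controls the image of a fixed element of order $m$, so that by choosing the approximation prime $p$ to be divisible by $m$ (or congruent to $1$ modulo $m$) and coinducing appropriately, the $\mathbb{Z}(m)$-summand is absorbed into a single cyclic quotient of $\Lambda\oplus C$ generated by (the image of) a carefully chosen element. Arranging this compatibility while keeping the approximation close to a prescribed element of $U$ is where the main technical effort is concentrated.
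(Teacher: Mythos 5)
The paper does not actually give a self-contained proof of Theorem~\ref{th:Ageev}: it cites Ageev~\cite{Ageev2003}, where the result is stated (in a slightly different form) without proof, and explains that the formulation in the paper follows by combining Ageev's statement with minimality of the $\Aut(\mu)$-action on the (dense $G_\delta$) set of free $\Gamma$-actions and Proposition~\ref{p:minimal}. Your attempt is therefore a genuinely different route, aiming at a proof from scratch via the machinery of Subsection~8.1, and it has two real gaps.

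First, the reduction to finitely generated $\Gamma$ is not as immediate as you assert. It is true that every basic open $U \subseteq \Hom(\Gamma, G)$ is $\Res_{\Gamma \to \Gamma'}^{-1}(V)$ for some f.g.\ $\Gamma'$ and basic open $V \subseteq \Hom(\Gamma', G)$, but then $\Res_{\Gamma \to \Delta}(U) = \Res_{\Gamma' \to \Delta}\bigl(V \cap \Res_{\Gamma \to \Gamma'}(\Hom(\Gamma, G))\bigr)$, and the restriction $\Res_{\Gamma \to \Gamma'}$ is in general far from surjective (for instance, not every $\Z$-action extends to a $\Q$-action), so knowing that $\Res_{\Gamma' \to \Delta}$ is category-preserving does not by itself tell you that the image of $U$ is non-meager. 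What you need is precisely that the set of $\Gamma'$-actions extending to $\Gamma$-actions is at least non-meager (or better, comeager) in $\Hom(\Gamma', G)$ --- a statement of the same flavor as the one being proved, so the reduction is not free.

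Second, and more importantly, the case $C \cong \Z(m)$ is left entirely unresolved. Lemma~\ref{l:commuting-categorypreserving} is proved only for $\Gamma \times \Z$; its proof exploits the fact that the extra generator is free (the formula $\pi(\gamma,n) = \psi(\gamma)\psi(\gamma_0)^n$ defines a homomorphism precisely because there is no relation on the new generator), and this fails for a $\Z(m)$-factor unless one has arranged $\psi(\gamma_0)^m = 1$. You correctly identify that one would need a strengthened Rokhlin-type approximation controlling the order of a prescribed element compatibly with the rest of the data, but you do not prove it, and the paper contains no lemma to this effect. Since the theorem is asserted for all countable abelian $\Gamma$ with an infinite cyclic subgroup (for example $\Z \oplus \Z(2)$), this case cannot be bypassed. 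As it stands, your argument establishes the result only for torsion-free f.g.\ $\Gamma$, which is already covered by Theorem~\ref{t:res-cat1} and adds nothing over the paper.
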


This theorem is stated in \cite{Ageev2003} without a proof. Its formulation there is slightly different from ours but using the fact that the action of $\Aut(\mu)$ on the set of free actions of $\Gamma$ is minimal (see, e.g., \cite{Foreman2004}) and applying Proposition~\ref{p:minimal}, it is easy to derive the conclusion above.

In a subsequent work \cite{Melleray2012}, the first author has extended Theorem~\ref{th:Ageev} by proving that the restriction map from $\Hom(\Gamma,\Aut(\mu))$ to $\Hom(\Delta,\Aut(\mu))$ is category-preserving whenever $\Gamma$ is countable abelian and $\Delta \le \Gamma$ is finitely generated.

Using Theorem~\ref{th:Ageev}, one can generalize Theorem~\ref{t:centralizers} as follows.
\begin{theorem}
Let $\Gamma$ be a countable abelian group containing an infinite cyclic subgroup. Then, for a generic $\pi \in \Hom(\Gamma,\Aut(\mu))$, one has $\mcC(\pi)=\cl{\pi(\Gamma)}$.
\end{theorem}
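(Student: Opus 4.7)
The plan is to follow the template of the proof of Theorem~\ref{t:centralizers} verbatim: look at the restriction map $\Res \colon \Hom(\Gamma \times \Z, \Aut(\mu)) \to \Hom(\Gamma, \Aut(\mu))$, identify its fiber over $\pi$ with the centralizer $\mcC(\pi)$, and combine Lemma~\ref{l:commuting-categorypreserving} with Theorem~\ref{t:KU2} to conclude that, for generic $\pi$ and generic $g \in \mcC(\pi)$, $g \in \cl{\pi(\Gamma)}$. Since $\cl{\pi(\Gamma)}$ is then a closed comeager subgroup of $\mcC(\pi)$, it must equal $\mcC(\pi)$ (two disjoint comeager sets cannot coexist in a Baire space, so no non-trivial coset can survive). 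Thus the whole argument reduces to verifying that
\[
\set{\psi \in \Hom(\Gamma \times \Z, \Aut(\mu)) : \cl{\psi(\Gamma \times \Z)} = \cl{\psi(\Gamma)}}
\]
is dense, which is exactly the hypothesis needed to invoke Lemma~\ref{l:commuting-categorypreserving}.

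To establish this density I would not go through Theorem~\ref{t:generic-monothetic} (which requires $\Gamma$ to be torsion-free) but through Ageev's Theorem~\ref{th:Ageev}. Fix an infinite cyclic subgroup $\Delta \leq \Gamma$; applying Theorem~\ref{th:Ageev} to $\Delta \leq \Gamma \times \Z$, the restriction $\rho \colon \Hom(\Gamma \times \Z, \Aut(\mu)) \to \Hom(\Delta, \Aut(\mu))$ is category-preserving. The original Chacon--Schwartzbauer theorem, i.e. the special case $\Gamma = \Z$ of Theorem~\ref{t:centralizers}, asserts that
\[
\set{\sigma \in \Hom(\Delta, \Aut(\mu)) : \mcC(\sigma) = \cl{\sigma(\Delta)}}
\]
is comeager, so its $\rho$-preimage is comeager in $\Hom(\Gamma \times \Z, \Aut(\mu))$. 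For any $\psi$ in that preimage one has $\mcC(\psi|_\Delta) = \cl{\psi(\Delta)}$; since $\Gamma \times \Z$ is abelian, $\psi(\Gamma \times \Z)$ is contained in the $\Aut(\mu)$-centralizer of $\psi(\Delta)$, hence in $\cl{\psi(\Delta)} \subseteq \cl{\psi(\Gamma)}$, which together with the obvious reverse inclusions forces $\cl{\psi(\Delta)} = \cl{\psi(\Gamma)} = \cl{\psi(\Gamma \times \Z)}$. This supplies the required density (in fact, comeagerness).

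With this input, Lemma~\ref{l:commuting-categorypreserving} makes $\Res$ category-preserving, and Theorem~\ref{t:KU2} applied to the comeager set $\{\psi : \cl{\psi(\Gamma \times \Z)} = \cl{\psi(\Gamma)}\}$ delivers $\forall^* \pi \, \forall^* g \in \mcC(\pi) \ g \in \cl{\pi(\Gamma)}$, finishing the argument just as in the proof of Theorem~\ref{t:centralizers}. The only genuinely new ingredient over the torsion-free case is Ageev's theorem, which is precisely what lets us reduce the analysis to a cyclic subgroup $\Delta$ without needing any monotheticity statement for $\cl{\pi(\Gamma)}$ itself; this is the step where torsion-freeness of $\Gamma$ is traded for the strictly weaker assumption that $\Gamma$ contains an infinite cyclic subgroup, and consequently the main (and essentially only) obstacle in the argument.
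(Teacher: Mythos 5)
Your argument is correct, but it is a noticeably longer route than the one the paper takes, and the shortcut is already hiding inside your own intermediate step. The paper applies Ageev's Theorem~\ref{th:Ageev} directly to the inclusion $\Delta \leq \Gamma$ (not $\Delta \leq \Gamma \times \Z$), pulls back the comeager set $\set{\sigma \in \Hom(\Delta,\Aut(\mu)) : \mcC(\sigma) = \cl{\sigma(\Delta)}}$, and then for a generic $\pi \in \Hom(\Gamma,\Aut(\mu))$ reads off $\mcC(\pi) \subseteq \mcC(\pi|_\Delta) = \cl{\pi(\Delta)} \subseteq \cl{\pi(\Gamma)} \subseteq \mcC(\pi)$; no appeal to $\Gamma \times \Z$, Lemma~\ref{l:commuting-categorypreserving}, or Theorem~\ref{t:KU2} is needed. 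Your detour through $\Hom(\Gamma \times \Z, \Aut(\mu))$ works, and the density verification you give (via Ageev applied to $\Delta \leq \Gamma \times \Z$ and the observation that $\psi(\Gamma \times \Z) \subseteq \mcC(\psi|_\Delta) = \cl{\psi(\Delta)}$) is sound; but that very observation, transported one level down to $\Gamma$ in place of $\Gamma \times \Z$ and to $\mcC(\pi)$ in place of $\psi(\Gamma \times \Z)$, already yields the theorem outright, so the subsequent invocations of Lemma~\ref{l:commuting-categorypreserving} and the generalized Kuratowski--Ulam theorem are redundant. The trade-off: your version faithfully reuses the template of Theorem~\ref{t:centralizers} verbatim, which makes the logical dependencies transparent, whereas the paper's version exploits the extra strength of Ageev's theorem (category-preservation of restriction to a cyclic subgroup of $\Gamma$ itself, rather than of $\Gamma \times \Z$) to collapse the whole argument to a three-line inclusion chain.
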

\begin{proof}
Let $\Delta \le \Gamma$ be an infinite cyclic subgroup; by Ageev's result, the restriction map
$\pi \colon \Hom(\Gamma,\Aut(\mu)) \to \Hom(\Delta,\Aut(\mu))$ is category-preserving, so for the generic $\pi \in \Hom(\Gamma,\Aut(\mu))$, the centralizer $\mcC(\pi|_{\Delta})$ coincides with $\cl{\pi(\Delta)}$ (this follows from the fact that the centralizer of a generic element is the closure of its powers, i.e., Theorem~\ref{t:centralizers} in the case $\Delta=\Z$). Since $\mcC(\pi) \subseteq \mcC(\pi|_{\Delta})$, we obtain that $\mcC(\pi) \le \cl{\pi(\Delta)} \le \cl{\pi(\Gamma)}$ for the generic $\pi$. The converse inclusion is obvious.
\end{proof}

Using a similar argument, one also sees that for the case $G = \Aut(\mu)$, Corollary~\ref{c:res-cat2} generalizes from $\Z^d$ to any abelian group containing an element of infinite order.

We conclude with several open problems.

\smallskip \noindent \textbf{Questions:}
\begin{enumerate}
\item Are conjugacy classes in $\Hom(\Gamma, \Aut(\mu))$ and $\Hom(\Gamma, \Iso(\bU))$ meager for every countable, infinite group $\Gamma$?

\item For $G$ either $\Aut(\mu)$ or $\Iso(\bU)$, does there exist a Polish group $H$ such that for the generic $\pi \in \Hom(\Z, G)$, $\cl{\pi(\Z)} \cong H$? In particular, is this true for $H = L^0(\T)$?

\item For which pairs of groups $\Delta \le \Gamma$ is the restriction map $\Res \colon \Hom(\Gamma,G) \to \Hom(\Delta,G)$ category-preserving (again for both $G = \Aut(\mu)$ and $G = \Iso(\bU)$)?
\end{enumerate}

%

\appendix
\section{A generalization of the Kuratowski--Ulam theorem} \label{a:category}
In this appendix, we establish a generalization of the Kuratowski--Ulam theorem (Theorem~\ref{t:KU2} below), similar to the disintegration theorem in measure theory. We believe this theorem to be of some independent interest.

Let us begin by pointing out that from the proof of the classical Kuratowski--Ulam theorem (see for example, \cite{Kechris1995}*{8K}), one can obtain the following result.
\begin{theorem} \label{t:KU1}
Let $X$, $Y$ be Polish spaces and $f \colon X \to Y$ be a continuous, open map. Let $A$ be a Baire measurable subset of $X$. Then the following are equivalent:
\begin{enumerate} \romanenum
\item \label{eq:KU1(i)} $A$ is comeager in $X$;
\item \label{eq:KU1(ii)} $\forall^* y \in Y \ A \cap f^{-1}(\{y\})$ is comeager in $f^{-1}(\{y\})$.
\end{enumerate}
\end{theorem}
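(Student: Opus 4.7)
The plan is to follow the classical Kuratowski--Ulam argument, reducing everything to two statements about open sets and meager sets. Since $A$ has the Baire property, write $A = U \triangle M$ with $U \sub X$ open and $M \sub X$ meager; the theorem will then follow once we establish (a) that for any open $U \sub X$, $U$ is dense in $X$ if and only if $\forall^* y \in Y$, $U \cap f^{-1}(\{y\})$ is dense in $f^{-1}(\{y\})$, and (b) that for any meager $M \sub X$, $\forall^* y \in Y$, $M \cap f^{-1}(\{y\})$ is meager in $f^{-1}(\{y\})$. We may assume $f$ is surjective, replacing $Y$ by $f(X)$ if necessary (which is open, hence Polish).

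For (a), the nontrivial direction is that density of $U$ implies generic density of the fibers. Fix a countable basis $\{V_n\}$ for $X$ and set $Y_n = f(V_n) \setminus f(V_n \cap U)$. Since $f$ is open, both $f(V_n)$ and $f(V_n \cap U)$ are open in $Y$. Because $V_n \cap U$ is dense in $V_n$, a brief chase using continuity and openness of $f$ shows $f(V_n \cap U)$ is dense in $f(V_n)$: given a nonempty open $W \sub f(V_n)$, the set $f^{-1}(W) \cap V_n$ is open, nonempty, and meets $U$, so $W$ meets $f(V_n \cap U)$. Consequently, any open set contained in $\overline{Y_n}$ must lie in the boundary $\partial f(V_n)$ (a closed set with empty interior), forcing $Y_n$ to be nowhere dense in $Y$. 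Now $U \cap f^{-1}(\{y\})$ fails to be dense in $f^{-1}(\{y\})$ precisely when some $V_n$ meets $f^{-1}(\{y\})$ but $V_n \cap U$ does not, i.e., precisely when $y \in \bigcup_n Y_n$, which is meager. The converse of (a) is straightforward: if $U$ is not dense, pick a nonempty open $V \sub X \setminus U$; then $f(V)$ is a nonempty open subset of $Y$, and for every $y \in f(V)$ the nonempty relatively open set $V \cap f^{-1}(\{y\})$ avoids $U$, so (ii) fails.

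Item (b) then follows by a countable-union argument: writing $M \sub \bigcup_n F_n$ with each $F_n$ closed and nowhere dense, apply (a) to $X \setminus F_n$ to conclude that $\forall^* y$, $F_n \cap f^{-1}(\{y\})$ is closed with empty interior in $f^{-1}(\{y\})$, hence nowhere dense. To finish the proof, observe that $A$ is comeager in $X$ if and only if $U$ is dense in $X$, hence by (a) if and only if $\forall^* y$, $U \cap f^{-1}(\{y\})$ is dense in $f^{-1}(\{y\})$; combining with (b) and the identity $A \cap f^{-1}(\{y\}) = (U \cap f^{-1}(\{y\})) \triangle (M \cap f^{-1}(\{y\}))$ yields the desired equivalence. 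The main technical point is the verification that each $Y_n$ is nowhere dense in the whole space $Y$ (and not merely in $f(V_n)$); this is where the openness of $f$, rather than just its continuity, enters decisively.
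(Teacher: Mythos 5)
The proposal is correct. It reaches the same theorem via a somewhat different architecture than the paper. The paper proves (i)$\Rightarrow$(ii) first for a dense open $A$, by showing that each set $C_n = \{y : f^{-1}(\{y\}) \cap U_n \neq \emptyset \Rightarrow A \cap f^{-1}(\{y\}) \cap U_n \neq \emptyset\}$ is $G_\delta$ and dense, then extends to comeager $A$ by Baire, and finally gets (ii)$\Rightarrow$(i) indirectly by localizing to an open set where $A$ is meager and producing a contradiction in a single fiber. You instead decompose $A = U \triangle M$ with $U$ open and $M$ meager and establish, in the style of the classical Oxtoby development, (a) a biconditional (density iff fiberwise generic density) for open sets and (b) fiberwise meagerness for meager sets, then assemble the equivalence. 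The core computation is the same: your $Y_n = f(V_n)\setminus f(V_n\cap U)$ is precisely $Y\setminus C_n$, and your argument that $f(V_n\cap U)$ is dense open in $f(V_n)$ is the same chase the paper runs to show $C_n$ is dense. What your route buys is a symmetric packaging where (ii)$\Rightarrow$(i) follows from the explicit converse in (a) rather than by contradiction, which is slightly longer but perhaps easier to audit. One minor point worth spelling out: your reduction to surjective $f$ is legitimate because $f(X)$ is open, the exceptional set $\{y : A\cap f^{-1}(\{y\}) \text{ not comeager}\}$ lives inside $f(X)$, and a subset of an open set is meager in that open subspace iff it is meager in $Y$; as stated you assert but do not verify this.
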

\begin{proof}
We begin by proving that \eqref{eq:KU1(i)} implies \eqref{eq:KU1(ii)} in the particular case when $A$ is a dense open subset of $X$. It is enough to prove that
$$\forall^* y \in Y \ A \cap f^{-1}(\{y\}) \text{ is dense in } f^{-1}(\{y\}). $$
Fix a countable basis $\set{U_n}$ for the topology of $X$; using Baire's theorem, the above statement is equivalent to saying that
$$\forall n \ \forall^* y \in Y \ \left( f^{-1}(\{y\}) \cap U_n \ne \emptyset \Rightarrow A \cap f^{-1}(\{y\}) \cap U_n \ne \emptyset \right)\ . $$
Fix $n < \omega$. The fact that $f$ is open implies that
$$C_n=\{y \in Y \colon  f^{-1}(\{y\}) \cap U_n \ne \emptyset \Rightarrow A \cap f^{-1}(\{y\}) \cap U_n \ne \emptyset \} $$
is $G_{\delta}$ (it is the union of a closed set and an open set), and to show that it is comeager, we simply need to show that it is dense in $Y$. To that end, pick a non-empty open subset $V$ of $Y$. We may restrict our attention to the case when $V \subseteq f(U_n)$. Then
$f^{-1}(V) \cap U_n \cap A$ is non-empty since $A$ is dense, and for any $x \in f^{-1}(V) \cap A \cap U_n$ we have, letting $y=f(x)$, that $y \in V$ and $f^{-1}(\{y\}) \cap A \cap U_n \ne \emptyset$. This proves that \eqref{eq:KU1(i)} implies \eqref{eq:KU1(ii)} when $A$ is a dense open subset of $X$. The general case follows easily from Baire's theorem.

Now we turn to the proof that \eqref{eq:KU1(ii)} implies \eqref{eq:KU1(i)}. We proceed by contradiction and assume that $A$ is a Baire measurable subset of $X$ that satisfies condition \eqref{eq:KU1(ii)} but is not comeager. Using the fact that $A$ is Baire measurable, we know that there exists some non-empty open subset $O$ of $X$ such that $A \cap O$ is meager, whence (using the fact that \eqref{eq:KU1(i)} implies \eqref{eq:KU1(ii)} applied to $A \cap O$)
$$
\forall^* y \in Y \ (A\cap O) \cap f^{-1}(\{y\}) \text{ is meager in }f^{-1}(\{y\}).
$$
Since $f(O)$ is non-empty and open in $Y$, we may apply the above condition and \eqref{eq:KU1(ii)} to pick $y \in f(O)$ such that $(A \cap O) \cap f^{-1}(\{y\})$ is meager in $f^{-1}(\{y\})$, and $A \cap f^{-1}(\{y\})$ is comeager in $f^{-1}(\{y\})$. This contradicts Baire's theorem in the closed non-empty set $f^{-1}(\{y\})$, and we are done.
\end{proof}

\begin{defn}
Let $X,Y$ be Polish spaces and $f \colon X \to Y$ a continuous map. We say that $f$ is \emph{category-preserving} if $f^{-1}(A)$ is comeager in $X$ whenever $A$ is comeager in $Y$.
\end{defn}
The definition above is equivalent to saying that $f^{-1}(A)$ is meager whenever $A$ is meager, and also to the fact that $f^{-1}(O)$ is dense whenever $O$ is open and dense in $Y$. Also, note that the composition of two category-preserving maps is still category-preserving.

Let us point out the following easy fact.
\begin{prop} \label{p:somewheredense}
Let $X,Y$ be Polish spaces, and $f \colon X \to Y$ a continuous map. Then the following are equivalent:
\begin{enumerate} \romanenum
\item\label{eq:somewhere(i)} $f$ is category-preserving;
\item\label{eq:somewhere(ii)} for any non-empty open subset $U$ of $X$, $f(U)$ is not meager;
\item\label{eq:somewhere(iii)} for any non-empty open subset $U$ of $X$, $f(U)$ is somewhere dense.
\end{enumerate}
\end{prop}
\begin{proof}
Suppose that \eqref{eq:somewhere(i)} holds but \eqref{eq:somewhere(ii)} does not, and let $U$ be a non-empty open subset of $X$ such that $f(U)$ is meager. Then $f^{-1}(f(U))$ contains $U$ and does not intersect the comeager set $f^{-1}(Y \setminus f(U))$, a contradiction.

It is immediate that \eqref{eq:somewhere(ii)} implies \eqref{eq:somewhere(iii)}. To see that \eqref{eq:somewhere(iii)}$\Leftrightarrow$\eqref{eq:somewhere(i)}, observe that \eqref{eq:somewhere(iii)} is a rephrasing of the condition that $f^{-1}(O)$ is dense whenever $O$ is open and dense in $Y$.
\end{proof}
In particular, any open map between Polish spaces is category-preserving.

Our aim is to obtain an extension of Theorem~\ref{t:KU1} to category-preserving maps. The following easy lemma is a crucial ingredient.
\begin{lemma}\label{l:catpreserving}
Let $X,Y$ be Polish metric spaces, and $f \colon X \to Y$ be a continuous map. Then there exists a dense $G_{\delta}$ subset $A \subseteq Y$ such that $f \colon f^{-1}(A) \to A$ is open. (We allow here $f^{-1}(A) = \emptyset$.)
\end{lemma}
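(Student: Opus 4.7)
The plan is to exploit the fact that continuous images of open sets are analytic, hence have the Baire property, and then remove a meager set from $Y$ to kill all the ``bad'' behavior at once.

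First, fix a countable basis $\set{U_n}_{n \in \N}$ for the topology of $X$. For each $n$, the set $f(U_n)$ is analytic in $Y$ (as the continuous image of the Polish space $U_n$), so it has the Baire property; write
\[
f(U_n) = O_n \mathbin{\triangle} M_n,
\]
where $O_n$ is open in $Y$ and $M_n$ is meager. Since every meager subset of a Polish space is contained in a meager $F_\sigma$ set, choose $F_\sigma$ meager sets $F_n \supseteq M_n$, and define
\[
A = Y \setminus \bigcup_{n \in \N} F_n.
\]
Then $A$ is dense $G_\delta$ in $Y$ by Baire's theorem (note that the conclusion of the lemma allows $f^{-1}(A) = \emptyset$, so density suffices even if $f(X)$ happens to be meager).

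Next I would check that $f(U_n \cap f^{-1}(A)) = O_n \cap A$ for every $n$. The inclusion $\subseteq$ holds since $A \cap M_n = \emptyset$ gives $f(U_n) \cap A = (O_n \triangle M_n) \cap A = O_n \cap A$. For $\supseteq$, if $y \in O_n \cap A \subseteq f(U_n) \cap A$, pick $x \in U_n$ with $f(x) = y$; then $x \in f^{-1}(A)$, so $y \in f(U_n \cap f^{-1}(A))$.

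Finally, to see that $f \colon f^{-1}(A) \to A$ is open, take an arbitrary open $V \subseteq f^{-1}(A)$; write $V = U \cap f^{-1}(A)$ with $U$ open in $X$, and decompose $U = \bigcup_{n \in S} U_n$ for some $S \subseteq \N$. Then
\[
f(V) = \bigcup_{n \in S} f(U_n \cap f^{-1}(A)) = \bigcup_{n \in S} (O_n \cap A),
\]
which is open in $A$. The argument is essentially bookkeeping; the only nontrivial input is the Baire property of analytic sets, and the only point requiring care is the equality $f(U_n) \cap A = f(U_n \cap f^{-1}(A))$, which is where the fact that $A$ is \emph{saturated} in the appropriate sense (having been defined by removing meager sets in $Y$, not in $X$) is used.
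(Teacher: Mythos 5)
Your proof is correct and follows essentially the same route as the paper: fix a countable basis $\{U_n\}$ of $X$, use the Baire property of analytic sets to write $f(U_n)=O_n\mathbin{\triangle} M_n$, delete the meager parts to obtain a dense $G_\delta$ set $A$, and verify that $f(U_n\cap f^{-1}(A))=O_n\cap A$ is relatively open in $A$. One small correction to your closing remark: the identity $f(U)\cap A = f(U\cap f^{-1}(A))$ is a purely set-theoretic fact valid for \emph{every} $A\subseteq Y$ and has nothing to do with saturation; the step where the construction of $A$ actually matters is $f(U_n)\cap A=O_n\cap A$, which is what makes these traces relatively open.
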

\begin{proof}
Fix a countable basis $\set{U_n}_{n< \omega}$ of the topology of $X$. Since each $f(U_n)$ is analytic, there exists for all $n$ an open $O_n \sub Y$ and a meager $M_n \sub Y$ such that $f(U_n) \sdiff O_n = M_n$.
Then $B=Y \setminus (\bigcup_{n< \omega}  M_n)$ is comeager, and we can find a dense $G_{\delta}$ subset $A$ of $Y$ which is contained in $B$. We claim that this $A$ satisfies the conclusion of the lemma. To see this, note first that for any $n$, we have that $f(U_n) \cap A$ is open in $A$. From this, it follows that $f(U) \cap A$ is open in $A$ for any open subset $U$ of $X$ and we are done.
\end{proof}

We are now ready to prove the following strengthening of Theorem~\ref{t:KU1}.
\begin{theorem} \label{t:KU2}
Let $X,Y$ be Polish spaces and $f \colon X \to Y$ be a continuous, category-preserving map. Let $\Omega$ be a Baire measurable subset of $X$. Then the following are equivalent:
\begin{enumerate} \romanenum
\item \label{eq:KU2(i)} $\Omega$ is comeager in $X$;
\item \label{eq:KU2(ii)} $\forall^* y \in Y \ \, \Omega \cap f^{-1}(\{y\})$ is comeager in $f^{-1}(\{y\})$.
\end{enumerate}
\end{theorem}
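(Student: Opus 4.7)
The plan is to reduce Theorem~\ref{t:KU2} to the open-map case (Theorem~\ref{t:KU1}) by passing to a well-chosen dense $G_\delta$ subset of $Y$. Concretely, apply Lemma~\ref{l:catpreserving} to obtain a dense $G_\delta$ set $A \subseteq Y$ such that $f \colon f^{-1}(A) \to A$ is open. Because $A$ is comeager in $Y$ and $f$ is category-preserving, $f^{-1}(A)$ is comeager in $X$; moreover, since $f$ is continuous and $A$ is $G_\delta$, the set $f^{-1}(A)$ is itself $G_\delta$ in $X$, and hence Polish in the subspace topology. The restricted map $g := f\restriction_{f^{-1}(A)} \colon f^{-1}(A) \to A$ is therefore a continuous, open map between Polish spaces, and Theorem~\ref{t:KU1} applies to it.

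For the implication \eqref{eq:KU2(i)} $\Rightarrow$ \eqref{eq:KU2(ii)}, I would argue as follows. Suppose $\Omega$ is comeager in $X$. Then $\Omega \cap f^{-1}(A)$ is comeager in $X$, and since $f^{-1}(A)$ is itself comeager, $\Omega \cap f^{-1}(A)$ is comeager in $f^{-1}(A)$. It is Baire measurable there (as the intersection of a Baire measurable set with a $G_\delta$). Applying Theorem~\ref{t:KU1} to $g$, we obtain that $\forall^* y \in A$, the fiber $\Omega \cap g^{-1}(\{y\}) = \Omega \cap f^{-1}(\{y\})$ is comeager in $f^{-1}(\{y\})$. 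Since $A$ is itself comeager in $Y$, this gives \eqref{eq:KU2(ii)}.

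For the converse \eqref{eq:KU2(ii)} $\Rightarrow$ \eqref{eq:KU2(i)}, the same reduction works in reverse. Assuming \eqref{eq:KU2(ii)}, the conclusion holds for comeager-many $y \in A$ in particular. Applying the easy direction of Theorem~\ref{t:KU1} to the Baire measurable set $\Omega \cap f^{-1}(A)$ inside the Polish space $f^{-1}(A)$, we deduce that $\Omega \cap f^{-1}(A)$ is comeager in $f^{-1}(A)$. Combined with the fact that $f^{-1}(A)$ itself is comeager in $X$, this forces $\Omega$ to be comeager in $X$.

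The substance of the argument lives entirely in Lemma~\ref{l:catpreserving}, which is already proved; once one has an ``open part'' $A$ of full measure (in the Baire category sense), everything else is a bookkeeping exercise about comeager subsets of Polish subspaces. The only mild subtlety is making sure that Baire measurability transfers correctly between $X$, $f^{-1}(A)$, and the fibers, but this is immediate from the fact that $f^{-1}(A)$ is $G_\delta$ in $X$ and each fiber $f^{-1}(\{y\})$ is closed in $X$. I do not anticipate a genuine obstacle beyond keeping track of which space ``comeager'' refers to at each step.
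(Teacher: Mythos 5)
Your proof is correct and follows essentially the same route as the paper's: obtain the dense $G_\delta$ set $A$ from Lemma~\ref{l:catpreserving}, observe that $B = f^{-1}(A)$ is dense $G_\delta$ in $X$ because $f$ is continuous and category-preserving, and then apply Theorem~\ref{t:KU1} to the open restriction $f\colon B \to A$, transferring comeagerness between $X$ and $B$ and between $Y$ and $A$. The only cosmetic quibble is calling the (\ref{eq:KU2(ii)})$\Rightarrow$(\ref{eq:KU2(i)}) direction of Theorem~\ref{t:KU1} the ``easy'' one — in the paper's proof of that theorem it is actually the direction proved by contradiction — but this does not affect the argument.
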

\begin{proof} From Lemma~\ref{l:catpreserving} we obtain a dense $G_{\delta}$ subset $A$ of $Y$ such that $f \colon f^{-1}(A) \to A$ is open. Since $A$ is $G_{\delta}$ and $f$ is continuous, $B=f^{-1}(A)$ is $G_{\delta}$ in $X$, and it is dense since $f$ is category-preserving.

Assume that \eqref{eq:KU2(i)} holds. Since $\Omega$ is comeager in $X$ and $B$ is dense $G_{\delta}$, it follows from Baire's theorem that $\Omega \cap B$ is comeager in $B$.
Then we may apply Theorem~\ref{t:KU1} to the open map $f \colon B \to A$ to obtain that
$$\forall^* y \in Y \ ( \Omega \cap B) \cap f^{-1}(\{y\}) \text{ is comeager in } f^{-1}(\{y\}),$$
which implies the desired conclusion.

Now assume that \eqref{eq:KU2(ii)} holds. As $A$ is comeager in $Y$, we have
$$\forall^* a \in A \ \, \Omega \cap f^{-1}(\{a\}) \text{ is comeager in } f^{-1}(\{a\}).$$
Then Theorem~\ref{t:KU1} yields that $\Omega \cap B$ is comeager in $B$, and this implies that $\Omega$ is comeager in $X$.
\end{proof}

It may be worth pointing out that Theorem~\ref{t:KU2} is best possible in the sense that, if $f \colon X \to Y$ is a continuous map between two Polish spaces such that \eqref{eq:KU2(i)} and \eqref{eq:KU2(ii)} are equivalent for any Baire measurable subset $\Omega$ of $X$, then $f$ must be category-preserving.

It is particularly interesting, given a Polish group $G$ and two Polish $G$-spaces $X$ and $Y$, to obtain conditions that imply that a $G$-map $\pi \colon X \to Y$ is category-preserving.
An easy example of such a condition is the following.
\begin{prop}
Let $G$ be a Polish group and $X,Y$ be two Polish $G$-spaces. Let
\[
A= \set{y \in Y \colon G \cdot y \text{ is not meager}}
\]
and $\pi \colon X \to Y$ be a continuous $G$-map such that $\pi^{-1}(A)$ is dense in $X$. Then $\pi$ is category-preserving.
\end{prop}
\begin{proof}
Let $U$ be a non-empty open subset of $X$. By assumption, we may pick $x \in U$ such that $G \cdot \pi(x)$ is not meager. Find an open subset $V$ of $G$ such that $V \cdot x \subseteq U$. Then $\pi(V \cdot x) = V \cdot \pi(x)$ is not meager, since a countable union of translates of $V\cdot \pi(x)$ covers $G \cdot \pi(x)$. Hence $\pi(U)$ is not meager and we are done.
\end{proof}

This result applies in particular when there exists $y \in Y$ such that $G \cdot y$ is comeager and $\pi^{-1}(G \cdot y)$ is dense in $X$.

Another situation in which we obtain the same conclusion is the following.
\begin{prop} \label{p:minimal}
Let $G$ be a Polish group and $X$ and $Y$ Polish $G$-spaces such that $X$ is minimal (i.e., every orbit is dense). If $\pi \colon X \to Y$ is a $G$-map and $\pi(X)$ is comeager, then $\pi$ is category-preserving.
\end{prop}
\begin{proof}
Let $G_0$ be a dense countable subgroup of $G$ and $U \sub X$ be open non-empty. Since the action $G \actson X$ is minimal, we have $G_0 \cdot U = G \cdot U = X$ and hence $\pi(X) = \pi(G_0 \cdot U) = G_0 \cdot \pi(U)$ is comeager in $Y$, hence $\pi(U)$ is non-meager and $\pi$ is therefore category-preserving.
\end{proof}

\bibliography{mybiblio}
\end{document}